\numberwithin{equation}{section}
\newtheorem{theorem}{Theorem}[section]
\newtheorem{proposition}[theorem]{Proposition}
\newtheorem{lemma}[theorem]{Lemma}
\newtheorem{corollary}[theorem]{Corollary}
\theoremstyle{definition}
\definecolor{gr}{rgb}   {0.,   0.69,   0.23 }
\def\beq{\begin{equation}}
\def\eeq{\end{equation}}
\newcommand{\bea}{\begin{eqnarray}}
\newcommand{\eea}{\end{eqnarray}}
\newcommand{\beas}{\begin{eqnarray*}}
\newcommand{\eeas}{\end{eqnarray*}}
\newcommand{\bel}{\begin{equation} \label}
\newcommand{\ee}{\end{equation}}
\newcommand{\bethl}{\begin{theorem} \label}
\newcommand{\beprl}{\begin{proposition} \label}
\newcommand{\epr}{\end{proposition}}
\newcommand{\belel}{\begin{lemma} \label}
\newcommand{\ele}{\end{lemma}}
\newcommand{\becol}{\begin{corollary} \label}
\newcommand{\eco}{\end{corollary}}
\newcommand{\bepf}{\begin{proof}}
\newcommand{\epf}{\end{proof}}
\newcommand{\pd}{\partial}
\newcommand{\xp}{x_{\bot}}
\newcommand{\xpa}{x_{\parallel}}
\newcommand{\ypa}{y_{\parallel}}
\newcommand{\xpe}{x_{\perp}}
\newcommand{\ype}{y_{\perp}}
\newcommand{\one}{\mathds{1}}
\newcommand{\pipe}{\pi_\perp}
\newcommand{\pipa}{\pi_\parallel}
\newcommand{\capa}{{{\rm Cap}}}
\newcommand{\lnd}{\ln_2}
\newcommand{\lnt}{\ln_3}
\newcommand{\rd}{{\mathbb R}^{2}}
\newcommand{\rt}{{\mathbb R}^{3}}
\newcommand{\re}{{\mathbb R}}
\newcommand{\Sbb}{{\mathbb S}}
\newcommand{\C}{{\mathbb C}}
\newcommand{\N}{{\mathbb N}}
\newcommand{\Z}{{\mathbb Z}}
\newcommand{\gB}{{\mathfrak{B}}}
\newcommand{\gC}{{\mathfrak{C}}}
\newcommand{\gD}{{\mathfrak{D}}}
\newcommand{\gM}{{\mathfrak{M}}}
\newcommand{\gp}{{\mathfrak{p}}}
\newcommand{\gS}{{\mathfrak{S}}}
\newcommand{\cB}{{\mathcal B}}
\newcommand{\cD}{{\mathcal D}}
\newcommand{\cE}{{\mathcal E}}
\newcommand{\cK}{{\mathcal K}}
\newcommand{\cM}{{\mathcal M}}
\newcommand{\cO}{{\mathcal O}}
\newcommand{\cR}{{\mathcal R}}
\newcommand{\cT}{{\mathcal T}}
\newcommand{\cW}{{\mathcal W}}
\newcommand{\veps}{{\varepsilon}}
\newcommand{\Hope} {H_{0,\perp}}
\newcommand{\Hopa} {H_{0,\parallel}}
\newcommand{\Ipe} {I_{\perp}}
\newcommand{\Ipa} {I_{\parallel}}
\newcommand{\omin}{\Omega_{\rm in}}
\newcommand{\omex}{\Omega_{\rm ex}}
\newcommand{\omj}{\Omega_j}
\newcommand{\Hpj}{H_{+,j}}
\newcommand{\Hmj}{H_{-,j}}
\newcommand{\srvpm}{V_\pm^{\frac{1}{2}}}
\newcommand{\oless}{\Omega_<}
\newcommand{\oneoless}{\one_{\Omega_<}}
\begin{document}
\title[SSF for geometric perturbations of magnetic Hamiltonians]{Threshold singularities of the spectral shift function for geometric perturbations of magnetic Hamiltonians}

\author[V.~Bruneau]{Vincent Bruneau}
\author[G. Raikov]{Georgi Raikov}

{
\begin{abstract}\setlength{\parindent}{0mm}
We consider the 3D Schr\"odinger operator $H_0$ with constant magnetic field $B$ of scalar intensity $b>0$, and its perturbations
  $H_+$ (resp., $H_-$) obtained by imposing Dirichlet (resp., Neumann) conditions on the boundary
  of the bounded domain $\Omega_{\rm in} \subset {\mathbb R}^3$. We introduce the Krein spectral shift functions
  $\xi(E;H_\pm,H_0)$, $E \geq 0$, for the operator pairs $(H_\pm,H_0)$, and study their singularities at the
  Landau levels $\Lambda_q : = b(2q+1)$, $q \in \Z_+$, which play the role of thresholds in the spectrum of  $H_0$. We show that $\xi(E;H_+,H_0)$ remains bounded as $E \uparrow \Lambda_q$,  $q \in \Z_+$,  being fixed, and obtain three asymptotic terms of $\xi(E;H_-,H_0)$ as $E \uparrow \Lambda_q$,  and of $\xi(E;H_\pm,H_0)$ as $E \downarrow \Lambda_q$. The first two terms are independent of the perturbation while the third one involves the {\em  logarithmic capacity} of the projection of
  $\Omega_{\rm in}$ onto the plane perpendicular to $B$.
\end{abstract}

\maketitle

{\bf  AMS 2010 Mathematics Subject Classification:} 35P20,  81Q10\\

{\bf  Keywords:} threshold singularities, spectral shift function, magnetic Laplacians, Dirichlet and Neumann boundary conditions, logarithmic capacity

\section{Introduction}
\label{s1} \setcounter{equation}{0}
Let
$$
B = (0,0,b), \quad b>0,
$$
be a vector in $\re^3$
which has the physical interpretation of a constant magnetic field. Then
 \bel{f1}
A(x) : = \frac{b}{2}\left(-x_2, x_1, 0\right), \quad x = (x_1,x_2,x_3) \in \re^3,
\ee
is a magnetic potential which generates $B$, i.e. ${\rm curl}\,A = B$,
$$
\Pi(A) = (\Pi_1(A),\Pi_2(A),\Pi_3(A)) : = -i\nabla - A
$$
is the magnetic gradient, and
$$
-\Delta_A : = \sum_{j=1}^3 \Pi_j(A)^2 =
\left(-i\frac{\partial}{\partial x_1} + \frac{b x_2}{2}\right)^2 + \left(-i\frac{\partial}{\partial x_2} - \frac{b x_1}{2}\right)^2 - \frac{\partial^2}{\partial x_3^2}
$$
is the magnetic Laplacian. In order to define the domain of an appropriate realization of $-\Delta_A$, self-adjoint in $L^2(\re^3)$, we need the following notations.
Let $\Omega$ be  an open non-empty subset of $\re^3$.
Introduce the magnetic Sobolev spaces
$$
{\rm H}_A^s(\Omega) : = \left\{u \in \cD'(\Omega) \,| \, \Pi(A)^\alpha u \in L^2(\re^3), \; \alpha \in \Z_+^3, \, 0 \leq |\alpha| \leq s\right\}, \quad s \in \Z_+.
 $$
Denote by ${\rm H}_{A,0}^s(\Omega)$ the closure of $C_0^\infty(\Omega)$ in the norm of ${\rm H}_A^s(\Omega)$ defined by
$$
\|u\|^2_{{\rm H}_A^s(\Omega)} : = \sum_{\alpha \in \Z_+^3: 0 \leq |\alpha| \leq s} \int_{\Omega} |\Pi(A)^\alpha u|^2\,dx.
$$
Then the operator $H_0 : = - \Delta_A$  with domain $\gD(H_0) : = {\rm H}_A^2(\re^3)$ is self-adjoint in $L^2(\re^3)$, and essentially self-adjoint on $C_0^\infty(\re^3)$ (see e.g.
\cite[Appendix]{GeMaSj91}).
 It is well known that
\bel{30}
\sigma(H_0) = \sigma_{\rm ac}(H_0) = [b,\infty),
\ee
and {\em the Landau levels}
$$
\Lambda_q : = b(2q+1), \quad q \in \Z_+ : = \left\{0,1,2,\ldots\right\},
$$
play the role of {\em  thresholds} in the spectrum $\sigma(H_0)$ of $H_0$
(see e.g. \cite{Fo28, La30}).\\
%Next, we perturb the operator $H_0$. The perturbations will be of geometric nature, namely we split $\re^3 $ into a bounded domain $\omin$ and the interior $\omex$ of its complement, and impose Dirichlet %or Neumann boundary conditions on the boundary $\partial \omin$. \\
Next, as usual, we define a {\em domain} in $\re^d$, $d \geq 1$, as an open, connected, non-empty subset of $\re^d$.  Let  $\omin \subset \re^3$ be a bounded domain with boundary $\partial \omin \in C^\infty$. Set $$\Gamma := \partial \omin, \quad  \omex : = \re^3\setminus \overline{\omin}.$$
Then the operator $\Hpj : = -\Delta_A$, $j = {\rm ex,in}$, with domain
$$
\gD(\Hpj) : = \left\{u \in {\rm H}_A^2(\omj) \, | \, u_{|\Gamma} = 0\right\},
$$
is the {\em Dirichlet} realization of  $-\Delta_A$ on $\omj$.
 Similarly, if $\nu$ is the unit normal vector at $\Gamma$, outward looking with respect to $\omin$, then the operator $\Hmj : = -\Delta_A$, $j = {\rm ex,in}$, with
domain
$$
\gD(\Hmj) : = \left\{u \in {\rm H}_A^2(\omj) \, | \, \nu \cdot \Pi(A) u_{|\Gamma} = 0\right\},
$$
is the {\em Neumann} realization of  $-\Delta_A$ on $\omj$.
The operators $H_{\pm, j}$, $j={\rm ex,in}$, are self-adjoint in $L^2(\Omega_j)$. Moreover, $\Hpj$ (resp., $\Hmj$) corresponds to the closed quadratic form
    \bel{au1}
    \int_{\Omega_j} |\Pi(A)\, u|^2\,dx
    \ee
    with domain ${\rm H}_{A,0}^1(\Omega_j)$ (resp., ${\rm H}_{A}^1(\Omega_j)$). \\
    Using the orthogonal decomposition $L^2(\re^3) = L^2(\omin) \oplus L^2(\omex)$, set
$$
H_\pm : = H_{\pm, {\rm in}} \oplus H_{\pm, {\rm ex}}.
$$

    The aim of the article is to study the asymptotic behavior of the spectral shift functions $\xi(E;H_\pm,H_0)$ defined in the next section, as the energy $E$ approaches
    a given Landau level $\Lambda_q$, $q \in \Z_+$.\\
   The article is organized as follows. In Section \ref{s2} we introduce the spectral shift functions $\xi(E;H_\pm,H_0)$ and describe their main properties. In Section \ref{fs1} we state our main result, Theorem \ref{th1}, and briefly comment on it. In Section \ref{supersection} we prove several important auxiliary results, Propositions \ref{p10}, \ref{p8}, and \ref{fp1}, while the proof of Theorem \ref{th1} can be found in Section \ref{s5}. Finally, the Appendix contains the details concerning some technical results used in the main text of the article.

 \section{The spectral shift function}
\label{s2}
 Let $X$ be a separable Hilbert space. Denote by $\gB(X)$ (resp., $\gS_\infty(X)$) the class of linear bounded (resp., compact) operators acting in $X$, and by $\gS_p(X)$, $p \in [1,\infty)$, the $p$th Schatten-von Neumann space of operators $T \in \gS_\infty(X)$ for which the norm
    $$
    \|T\|_p : = \left({\rm Tr}\,(T^* T)^{p/2}\right)^{1/p}
    $$
    is finite. In particular, $\gS_1(X)$ is the trace class, and $\gS_2(X)$ is the Hilbert-Schmidt class over $X$. If $X = L^2(\re^3)$, we omit $X$ in the notations $\gB(X)$ and $\gS_p(X)$, $p \in [1,\infty]$ .\\
    By the Dirichlet-Neumann bracketing and the non-negativeness of the quadratic form \eqref{au1}, we have
    \bel{32}
H_+ \geq H_0 \geq H_- \geq 0.
    \ee
     By \eqref{30},
and $b > 0$, we find that the operators $H_0$, and hence $H_+$, are invertible. It is not difficult to see that $H_-$ is invertible as well. To this end, arguing as in the proof of Proposition \ref{p10} below, we find that
$$
 (H_- + I)^{-1} - (H_0 + I)^{-1} \in \gS_2 \subset \gS_\infty.
$$
Therefore, the Weyl theorem on the invariance of the essential spectrum under relatively compact perturbations yields
    %\bel{31}
    $$
    \sigma_{\rm ess}(H_-) =  \sigma_{\rm ess}(H_0) = [b,\infty).
    $$
    %\ee
    Hence, if $0 \in \sigma(H_-)$, then the zero should be a discrete eigenvalue of $H_-$. Let $u \in \gD(H_-)$ such that $H_- u = 0$. By  \eqref{au1}, we have
    \bel{33}
    \Pi(A)u_{|\omin} = 0, \quad \Pi(A)u_{|\omex} = 0.
    \ee
    Taking into account the explicit expression \eqref{f1} for $A$, we find that the only element $u \in \gD(H_-)$ which satisfies \eqref{33}, is $u=0$, and hence $0 \not \in \sigma(H_-)$.\\
    Further, \eqref{32} implies
    \bel{34}
H_-^{-1} \geq H_0^{-1} \geq H_+^{-1}.
    \ee
    Set
    $$
    V_+: = H_0^{-1} - H_+^{-1}, \quad V_-: = H_-^{-1} - H_0^{-1}.
    $$
    Then, \eqref{34} yields $V_\pm \geq 0$.\\
    %Next, we introduce several notations needed for the introduction of the SSF $\xi(E;H_\pm,H_0)$.

    \beprl{p10} We have
    \bel{46}
    V_\pm \in \gS_2.
    \ee
    Moreover,
    \bel{47}
    H_\pm^{-2} - H_0^{-2} \in \gS_1.
    \ee
    \epr
    The proof of Proposition \ref{p10} can be found in Section \ref{s2a}.\\

    {\em Remark}: In \cite{Bi62, BiSo79, BiSo80a}, the authors consider second-order elliptic differential operators in $\re^d$, $d \geq 2$, equip them with Dirichlet or Neumann boundary conditions on appropriate hypersurfaces, and obtain results closely related to our Proposition \ref{p10}. Although, formally, our operator $H_0$ is not in the classes of the operators considered in
    \cite{Bi62, BiSo79, BiSo80a}, the methods applied there may improve relations \eqref{46} and \eqref{47} which, nonetheless,
    are sufficient for the purposes of this article. \\

Using \eqref{47}, we define the spectral shift function (SSF) $\xi(E;H_\pm,H_0)$ as
    $$
    \xi(E;H_\pm,H_0) : = \left\{
    \begin{array} {l}
    -\xi(E^{-2};H_\pm^{-2},H_0^{-2}) \quad \mbox{if} \quad E > \inf \sigma(H_\pm),\\[2mm]
    0 \quad \mbox{if} \quad E < \inf \sigma(H_\pm),
    \end{array}
    \right.
    $$
    where, for almost every $E>0$,
    \bel{37a}
    \xi(E^{-2};H_\pm^{-2},H_0^{-2}) : = \frac{1}{\pi} \lim_{\varepsilon \downarrow 0}\;{\rm arg}\;{\rm Det}\,\left(\left(H_\pm^{-2} - E^{-2} - i\varepsilon\right)\left(H_0^{-2} - E^{-2} - i\varepsilon\right)^{-1}\right),
    \ee
    the branch of the argument being fixed by the condition
    $$
    \lim_{{\rm Im}\,z \to \infty} {\rm arg}\;{\rm Det}\,\left(\left(H_\pm^{-2} - z\right)\left(H_0^{-2} - z\right)^{-1}\right) = 0
    $$
    (see the original work \cite{Kr53} or \cite[Chapter 8]{Ya92}). The SSF $\xi(\cdot;H_\pm,H_0)$ is the unique element of $L_{\rm loc}^1(\re)$ which satisfies the {\em Lifshits-Krein identity}
    $$
    {\rm Tr}\,\left(f(H_\pm) - f(H_0)\right) = \int_\re f'(E) \, \xi(E;H_\pm,H_0) \, dE, \quad f \in C_0^\infty(\re),
    $$
    and the normalization condition
    $$
    \xi(E;H_\pm,H_0) = 0, \quad E < \inf \sigma(H_\pm).
    $$
    Since $\inf \sigma(H_\pm) > 0$, so that $\xi(E;H_\pm,H_0) = 0$ for $E \in (-\infty,0]$, in the sequel we will consider $\xi(E;H_\pm,H_0)$ only for $E>0$. \\
    For almost every $E \in [b, \infty) = \sigma_{\rm ac}(H_0)$, the {\em  Birman-Krein formula} implies
    $$
    {\rm det}\, S(E; H_\pm, H_0) = e^{-2\pi i\xi(E;H_\pm,H_0)}
    $$
   where $S(E; H_\pm, H_0)$ is the {\em  scattering matrix} for the operator pair $(H_\pm,H_0)$  (see \cite{BiKr62} or \cite[Chapter 8]{Ya92}). On the other hand, for almost every $E \in (0,b)$ we have
     \bel{m14}
    \xi(E;H_-,H_0) = - {\rm Tr}\,\one_{(-\infty, E)}(H_-).
   \ee
   Here and in the sequel $\one_S$ denotes the characteristic function of the set $S$. Thus, $\one_{S}(T)$ is the spectral projection of $T$ corresponding to the Borel set $S \subset \re$, and by \eqref{m14}
    $ -\xi(E;H_-,H_0) $ is equal to the number of the eigenvalues of $H_-$ less than $E$ and counted with the multiplicities. \\
    {\em A priori},  the SSF $\xi(E;H_\pm,H_0)$ is defined only for almost every  $E \in \re$.
    Our next goal is to introduce a canonic representative of the class of equivalence $\xi(\cdot;H_\pm,H_0)$ following the main ideas of \cite{Pu97} (see below Proposition \ref{fp1}). Let $\C_\pm : = \{z \in \C \, | \, \pm {\rm Im}\,z > 0\}$. For $z \in \C_-$ set
    $$
    T^\pm(z) : = \srvpm (H_0^{-1} - z^{-1})^{-1} \srvpm.
    $$

    \beprl{p8} Let $E \in (0,\infty) \setminus b(2\Z_+ + 1)$. Then there exists a norm limit
    \bel{6}
    T^\pm(E): = {\rm n}-\lim_{\C_- \ni z \to E}  T^\pm(z)\in \gS_2,
    \ee
    and
    \bel{38}
    {\rm Im}\, T^\pm(E)  \in \gS_1.
    \ee
    Moreover,  ${\rm Re}\, T^\pm(E)$ {\rm (}resp., $ {\rm Im}\, T^\pm(E)${\rm )} depends continuously in $\gS_2$ (resp., in $\gS_1)$ on $E \in (0,\infty) \setminus b(2\Z_+ + 1)$.
    \epr

    The proof of Proposition \ref{p8} can be found in Subsection \ref{s3}. \\
% Our next proposition contains a suitable representation of the SSF $\xi(E; H_\pm, H_0)$.
Let $T = T^*$ be a compact operator in a Hilbert space. For $s>0$ set
    $$
    n_\pm(s;T) = {\rm Tr}\,\one_{(s,\infty)}(\pm T).
    $$
    Thus $n_+(s,T)$ (resp., $n_-(s,T)$) is just the number of the eigenvalues of $T$ counted with the multiplicities, greater than $s>0$ (resp., less than $-s < 0$). \\
    For $E \in (0,\infty)\setminus b(2\Z_+ + 1)$ set
    \bel{39}
\tilde{\xi}(E; H_\pm, H_0) : = \pm \frac{1}{\pi} \int_\re n_\pm \left(1; {\rm Re} \,  T^\pm(E) + t \, {\rm Im}\, T^\pm(E)\right) \, \frac{dt}{1+t^2}.
     \ee
   \beprl{fp1}
    The function $\tilde{\xi}(\cdot; H_\pm, H_0)$ is well defined on $(0,\infty)\setminus b(2\Z_+ +1 )$, bounded on every compact subset of $(0,\infty)\setminus b(2\Z_+ +1 )$, and  continuous on $(0,\infty)\setminus (\sigma_p(H_\pm) \cup b(2\Z_+ + 1))$ where $\sigma_p(H_\pm)$ denotes the set of the eigenvalues of $H_\pm$.\\ Moreover, for almost every $E \in (0,\infty)$ we have
    \bel{39a}
    \xi(E; H_\pm, H_0) = \tilde{\xi}(E; H_\pm, H_0).
    \ee
    \epr
    The proof of Proposition \ref{fp1} can be found in Subsection \ref{fs2}.\\

    {\em Remark}: In view of Proposition \ref{fp1},  we  identify in the sequel the SSF $\xi(E; H_\pm, H_0)$ with $\tilde{\xi}(E; H_\pm, H_0)$, and  assume that it is defined for every $E \in (0,\infty)\setminus b(2\Z_+ + 1)$.

   \section{Main Results}
   \label{fs1}
   Let $\cE \subset \rd$  be a Borel set, and $\gM(\cE)$ denote the set of compactly supported probability measures on $\cE$. Then  the  {\em logarithmic capacity} of $\cE$  is defined as
$\capa(\cE) : = e^{-{\mathcal I}(\cE)}$ where
$$
{\mathcal I}(\cE) : = \inf_{\mu \in \gM(E)}  \int_{\cE} \int_{\cE} \ln{|x-y|^{-1}} d\mu(x) d\mu(y).
$$
The properties of $\capa(\cE)$ we need, are summarized in Subsection \ref{ss55}.  A systematic exposition of the theory of the logarithmic capacity can be found, for example, in \cite[Chapter 5]{Ran95} and \cite[Chapter II, Section 4]{La72}. \\
Let $\cE \subset \rd$ be a Borel set such that $\capa(\cE) \in (0,\infty)$. Set
\bel{ms50}
    \gC(\cE) : = 1 + \ln{\left(b\,\capa(\cE)^2\right)}.
    \ee
Note that if $\cE$ is a bounded domain, then $\capa(\cE) \in (0,\infty)$. \\
%If $K$ is simply connected, then $\capa(K)$ is equal to the {\em conformal radius} of $K$ (see e.g. ). In ´particular, if
%$K$ is a closed disk of radius $R$, then $\capa(K) = R$.\\
    %Let $O \subset \rd$ be a bounded domain. Put
    For $x \in \re^3$, we write $x = (\xpe,\xpa)$ where $\xpe = (x_1,x_2) \in \rd$ are the variables on the plane perpendicular to the magnetic field $B$ while $\xpa = x_3 \in \re$ is the variable along $B$.
     For $x = (\xpe,\xpa) \in \re^3$ define the projections $\pipe(x): = \xpe$, $\pipa(x) : = \xpa$. Note that if $\Omega \subset \re^3$ is a (bounded) domain, then $\pipe(\Omega) \subset \rd$ is a (bounded) domain as well.
    Set $$\cO_{\rm in} : = \pipe(\Omega_{\rm in}).$$
    Thus, $\cO_{\rm in}$ is the projection of the obstacle $\Omega_{\rm in}$ onto the plane perpendicular to the magnetic field $B$.\\
    For $\lambda > 0$ small enough, and $C \in \re$ set
    $$
     \lnd(\lambda) : = \ln{|\ln{\lambda}|}, \quad \lnt(\lambda) : = \ln{\lnd(\lambda)},
    $$
    and
    $$
    \Phi_0(\lambda) : = \frac{|\ln{\lambda}|}{\lnd(\lambda)}, \quad \Phi_1(\lambda;C) : = \Phi_0(\lambda) \left(1 + \frac{\lnt(\lambda)}{\lnd(\lambda)} + \frac{C}{\lnd(\lambda)}\right).
    $$

    \begin{theorem} \label{th1}
    Let $\Omega_{\rm in}$ be a bounded domain with $\partial \Omega_{\rm in} \in C^\infty$. Fix $q \in \Z_+$. Then,
    \bel{ms1}
    \xi(\Lambda_q-\lambda; H_+, H_0) = O(1),
    \ee
    \bel{m13}
    \xi(\Lambda_q - \lambda; H_-,H_0) = - \frac{1}{2}\, \Phi_1(\lambda; \gC(\cO_{\rm in})) + o\left(\frac{|\ln{\lambda}|}{\lnd(\lambda)^2}\right),
    \ee
     \bel{ms2}
     \xi(\Lambda_q + \lambda; H_\pm,H_0) = \pm \frac{1}{4}\, \Phi_1(\lambda; \gC(\cO_{\rm in})) + o\left(\frac{|\ln{\lambda}|}{\lnd(\lambda)^2}\right),
    \ee
    as $\lambda \downarrow 0$.
    \end{theorem}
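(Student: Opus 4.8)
The plan is to reduce the analysis of $\xi(E;H_\pm,H_0)$ near a Landau level to the spectral analysis of the compact operators $T^\pm(E)$ from Proposition~\ref{p8}, and then to the spectral asymptotics of a concrete ``effective'' operator living on the $q$-th Landau level eigenspace. Concretely, using the representation \eqref{39} for $\tilde\xi$, one needs two-sided bounds on $n_\pm\bigl(1;\mathrm{Re}\,T^\pm(E)+t\,\mathrm{Im}\,T^\pm(E)\bigr)$ that are uniform in $t$; the standard mechanism (à la Pushnitski) is that when $E=\Lambda_q\mp\lambda$ the operator $\mathrm{Im}\,T^\pm(E)$ becomes negligible compared with $\mathrm{Re}\,T^\pm(E)$, whereas when $E=\Lambda_q+\lambda$ it contributes and produces the $\pm\tfrac14$ rather than $-\tfrac12$ normalization. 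So the counting function $n_\pm(1;\cdot)$ is, up to controlled errors, governed by the eigenvalue counting function of $\mathrm{Re}\,T^\pm(\Lambda_q\mp\lambda)$, and I would first isolate the leading part of this operator. Writing $H_0^{-1}-(\Lambda_q\mp\lambda)^{-1}$ and using the Landau-level decomposition $L^2(\re^3)=\bigoplus_{n\in\Z_+}p_n\otimes L^2(\re_{x_\parallel})$ with $p_n$ the Landau projection, the singular contribution as $\lambda\downarrow0$ comes entirely from the $n=q$ block, where the longitudinal operator is $-d^2/dx_\parallel^2+\Lambda_q$ evaluated near its bottom, i.e. a one-dimensional resolvent at energy $\mp\lambda$ with a $1/\sqrt\lambda$-type kernel. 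The upshot is that, modulo relatively compact (hence, for the logarithmic-scale asymptotics, negligible) terms, one must compute $n_+(s;\lambda)$ for an operator of the form $\lambda^{-1/2}\,p_q\,W\,p_q$ (times a transversal-to-longitudinal sandwiching), where $W$ encodes the obstacle $\Omega_{\rm in}$ through the boundary conditions.

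The key step is therefore identifying $p_q W p_q$ and computing its eigenvalue asymptotics. Here is where the geometry enters and where the \emph{logarithmic capacity} of $\cO_{\rm in}=\pi_\perp(\Omega_{\rm in})$ appears: in the Dirichlet ($+$) and Neumann ($-$) settings the effective perturbation on the $q$-th Landau eigenspace should be (a multiple of) the Toeplitz-type operator $p_q\,\one_{\cO_{\rm in}}\,p_q$ — or its ``complementary'' version for Neumann — acting on the Landau eigenspace $p_q L^2(\rd)$. The eigenvalue counting function of such Berezin--Toeplitz operators with the indicator of a bounded domain is \emph{known} to have the anomalous double-logarithmic behaviour $n_+(s;p_q\one_{\cO}p_q)\sim \Phi_0(s)$ with the subleading correction controlled precisely by $\gC(\cO)=1+\ln(b\,\capa(\cO)^2)$ — this is the Raikov–Warzel / Melgaard–Rozenblum type result, and it is the source of the constant $C=\gC(\cO_{\rm in})$ in $\Phi_1(\lambda;C)$. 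I would invoke (or reprove via the localization in Subsection~\ref{ss55}) that asymptotic formula, then compose it with the $\lambda^{-1/2}$ longitudinal factor: plugging $s\asymp\sqrt\lambda$ into $\Phi_0$ only shifts $|\ln s|$ to $\tfrac12|\ln\lambda|$ and renormalizes $\ln_2,\ln_3$ at the same logarithmic order, so the leading term survives with a factor $\tfrac12$ in front of $\Phi_1$, up to an $o\bigl(|\ln\lambda|/\ln_2(\lambda)^2\bigr)$ error.

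Finally, assemble the pieces through \eqref{39}: for $E=\Lambda_q-\lambda$ one finds $n_-$ is $O(1)$ (giving \eqref{ms1} for $H_+$) while $n_+$ produces $-\tfrac12\Phi_1$ (giving \eqref{m13}); for $E=\Lambda_q+\lambda$ the extra $\mathrm{Im}\,T^\pm$ contribution halves the relevant counting function under the $dt/\pi(1+t^2)$ averaging, yielding the $\pm\tfrac14\Phi_1$ of \eqref{ms2}, with the sign determined by whether we are counting eigenvalues of $+T$ or $-T$, i.e. whether the obstacle raises (Dirichlet) or lowers (Neumann) the operator. Throughout, Propositions~\ref{p10}, \ref{p8}, \ref{fp1} guarantee the trace-class/Hilbert--Schmidt memberships needed to split off the non-$q$ Landau blocks and the $\mathrm{Im}$ part as lower-order, and one uses that adding a $\gS_1$ (or even relatively-$\gS_p$-compact at the right scale) perturbation changes $n_\pm(1;\cdot)$ by $o\bigl(|\ln\lambda|/\ln_2(\lambda)^2\bigr)$. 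The main obstacle I anticipate is the second one: rigorously extracting the \emph{third} asymptotic term — the $\gC(\cO_{\rm in})$ correction — requires a delicate capacity-level analysis of the Toeplitz operator (sharp upper and lower bounds matching to second logarithmic order), and one must check that neither the reduction to the $q$-th Landau block, nor the passage from the sharp indicator $\one_{\cO_{\rm in}}$ to the genuine Dirichlet/Neumann effective potential, nor the $s\mapsto\sqrt\lambda$ substitution, disturbs this correction; the boundary smoothness $\partial\Omega_{\rm in}\in C^\infty$ and the potential-theoretic facts collected in Subsection~\ref{ss55} are exactly what make this control possible.
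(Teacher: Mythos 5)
Your outline does follow the paper's strategy: Pushnitski's representation \eqref{39}, isolation of the $q$-th Landau block where the one-dimensional resolvent supplies the $|\lambda|^{-1/2}$ factor, the dichotomy that the singular main term of $T^\pm(\Lambda_q+\lambda)$ is real and sign-definite for $\lambda<0$ but purely imaginary for $\lambda>0$ (whence $O(1)$ for $H_+$ below, $-\tfrac12\Phi_1$ for $H_-$ below, and $\pm\tfrac14\Phi_1$ above, the latter via the Cauchy average, i.e.\ $\tfrac1\pi{\rm Tr}\arctan$ as in Proposition \ref{p2} and Corollary \ref{msf2}), and a Filonov--Pushnitski-type capacity asymptotics for Toeplitz operators as the source of $\gC(\cO_{\rm in})$ (Proposition \ref{fipu}, Corollary \ref{msf1}). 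A minor slip: with the convention \eqref{39}, for $H_+$ below the threshold it is $n_+$, not $n_-$, that is $O(1)$, and $n_-$ that produces the Neumann asymptotics.

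However, there is a genuine gap at the central step, which you yourself label ``the main obstacle'' and then do not resolve: the claim that the effective perturbation on the $q$-th Landau level is ``(a multiple of) $p_q\,\one_{\cO_{\rm in}}\,p_q$'' is neither proved nor literally true, and the third asymptotic term --- the only place where $\capa(\cO_{\rm in})$ enters --- hinges entirely on replacing it by two-sided estimates and a limiting procedure. In the paper, the lower bounds (Propositions \ref{Sy1} and \ref{Sy2}) compare $n_+(s;M^\pm_{4,q})$ with $n_+\left(cs;p_q\,w_{\Omega_<}\,p_q\right)$ for domains $\overline{\Omega_<}\subset\omin$, and this requires non-trivial operator identities: the splitting $V_\pm=V_{\pm,0}-R_{\rm in}$, the cancellation $R_{\rm in}(H_0^{-1}-\Lambda_q^{-1})H_0\omega P_q(p_q\otimes\gp)=0$, positivity via \cite[Proposition 2.1]{PuRo07}, and in the Neumann case a genuinely different mechanism (the $(1+\delta)$-scaling of $H^{-1}_{-,{\rm in}}$ together with the auxiliary operator $H_0-\kappa\oneoless$), not a ``complementary version'' of the Dirichlet argument. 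The upper bound (Proposition \ref{3a}) is obtained only after a unitary reduction of $\cT_q^*\,\cT_q$ to a zeroth-level Toeplitz operator $p_0\upsilon_q p_0$ by Laguerre-polynomial calculus (Appendix \ref{as}), and it lives on the larger set $\pipe(\supp\,\omega)$, not on $\cO_{\rm in}$. The constant $\gC(\cO_{\rm in})$ is then recovered in Subsection \ref{ss57} by squeezing between tubular neighborhoods of Jordan curves inside $\cO_{\rm in}$ and outer $\delta$-neighborhoods, using $\capa(\cD)=\capa(\overline{\cD})$ and the continuity \eqref{au42}. Without these comparison arguments and the capacity squeeze, your reduction only yields the cruder asymptotics $-\tfrac12\Phi_0(\lambda)(1+o(1))$ and $\pm\tfrac14\Phi_0(\lambda)(1+o(1))$, i.e.\ the precision already available in \cite{FeRa04}, and not the statement of Theorem \ref{th1}.
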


    {\em Remarks}: (i) Evidently, \eqref{ms1} and \eqref{ms2} with sign ``+" imply
    \bel{ms9}
    \lim_{\lambda \downarrow 0} \frac{\xi(\Lambda_q-\lambda; H_+, H_0)}{\xi(\Lambda_q+\lambda; H_+, H_0)} = 0,
    \ee
   while  \eqref{m13} and \eqref{ms2} with sign ``--" imply
    \bel{ms10}
    \lim_{\lambda \downarrow 0} \frac{\xi(\Lambda_q-\lambda; H_-, H_0)}{\xi(\Lambda_q+\lambda; H_-, H_0)} = 2.
    \ee
     In a certain sense, relations \eqref{ms9} and \eqref{ms10} can be considered as generalizations of the classical Levinson theorem (see the original work \cite{Le49} or the survey article \cite{Ro99}), which relates  the (finite) number  of the negative eigenvalues of the non-magnetic Schr\"odinger operator $-\Delta + V$ with electric potential $V$ which decays fast enough at infinity, and the limit $\lim_{E \downarrow 0} \xi(E; -\Delta + V, -\Delta)$ where $\xi(E; -\Delta + V, -\Delta)$ is the SSF for the operator pair $(-\Delta + V, -\Delta)$.\\
     (ii) By the so-called telescopic property of the SSF, we have
     $$
     \xi(E; H_+,H_-) = \xi(E; H_+,H_0) - \xi(E; H_-,H_0), \quad E \in (0,\infty)\setminus b(2\Z_+ + 1).
     $$
     Therefore, \eqref{ms1} - \eqref{m13} imply
      $$
    \xi(\Lambda_q - \lambda; H_+,H_-) = \frac{1}{2}\, \Phi_1(\lambda; \gC(\cO_{\rm in})) + o\left(\frac{|\ln{\lambda}|}{\lnd(\lambda)^2}\right), \quad \lambda \downarrow 0,
    $$
     while \eqref{ms2} implies
     $$
     \xi(\Lambda_q + \lambda; H_+,H_-) =  \frac{1}{2}\, \Phi_1(\lambda; \gC(\cO_{\rm in})) + o\left(\frac{|\ln{\lambda}|}{\lnd(\lambda)^2}\right), \quad \lambda \downarrow 0.
    $$
    In particular, similarly to \eqref{ms9}-\eqref{ms10}, we have
    $$
    \lim_{\lambda \downarrow 0} \frac{\xi(\Lambda_q-\lambda; H_+, H_-)}{\xi(\Lambda_q+\lambda; H_+, H_-)} = 1.
    $$
     (iii) According to \eqref{m14},  we have
    \bel{m15}
    \xi(\Lambda_0-\lambda; H_-,H_0) = - {\rm Tr}\,\one_{(-\infty, \Lambda_0-\lambda)}(H_-) =
    \ee
    $$
    - {\rm Tr}\, \one_{(-\infty, \Lambda_0-\lambda)}(H_{-, \rm{ex}})  - {\rm Tr}\,\one_{(-\infty, \Lambda_0-\lambda)}(H_{-, \rm{in}}), \quad \lambda>0.
     $$
    Since the operator $H_{-, \rm{in}}$ is a second-order elliptic partial differential operator acting in a bounded domain with smooth boundary, its spectrum $\sigma(H_{-, \rm{in}})$ is discrete, and
    $$
    {\rm Tr}\,\one_{(-\infty, \Lambda_0-\lambda)}(H_{-, \rm{in}}) = O(1), \quad \lambda \downarrow 0.
    $$
    Then, \eqref{m13} with $q=0$  implies
    $$
    {\rm Tr}\,\one_{(-\infty, \Lambda_0-\lambda)}(H_{-, \rm{ex}}) =  \frac{1}{2}\, \Phi_1(\lambda; \gC(\cO_{\rm in})) + o\left(\frac{|\ln{\lambda}|}{\lnd(\lambda)^2}\right), \quad \lambda \downarrow 0,
    $$
    which describes the accumulation  of the discrete spectrum of the exterior Neumann magnetic Laplacian $H_{-, \rm{ex}}$ at $\Lambda_0 = \inf \sigma_{\rm ess}(H_{-, \rm{ex}})$. \\

    Let us compare Theorem \ref{th1} with similar results available in the literature.  The threshold singularities of the SSF for the operator pair $(H_0 + V, H_0)$ where $V$ is a real-valued fast decaying {\em electric potential}, were considered in \cite{FeRa04}. The cases of $V$ of power-like decay, exponential decay, and compact support were handled. Formally, our Theorem \ref{th1} resembles the results of \cite{FeRa04} on compactly supported $V$, which however are less precise than \eqref{m13} and \eqref{ms2}: the right-hand side of the analogue of \eqref{m13} (resp., of \eqref{ms2}) in \cite{FeRa04} is $-\frac{1}{2} \Phi_0(\lambda)(1+o(1))$ (resp., $\pm\frac{1}{4} \Phi_0(\lambda)(1+o(1))$).\\[2mm]
    A problem closely related to the analysis of the SSF $\xi(\cdot; H_0 + V, H_0)$ as $E \to \Lambda_q$ for a given $q \in \Z_+$, is  the investigation of {\em accumulation of
    resonances} of $H_0 + V$ at $\Lambda_q$ performed in \cite{BoBrRa07, BoBrRa14a, BoBrRa14b}. The asymptotic distribution of resonances near the Landau levels for the operators $H_\pm$ considered in this article, is studied in \cite{BrSa16a}. \\
    Let us mention also some 2D results related to Theorem \ref{th1}. It is well known that in the 2D case the spectrum of the Landau Hamiltonian is purely point and consists of the Landau levels which are eigenvalues of infinite multiplicity (see \eqref{m10} -- \eqref{m10a} below). Hence, the problem of the singularities of the SSF for the 2D analogue of the operator pair $(H_\pm, H_0)$ reduces to the study of the accumulation of the discrete eigenvalues of the 2D analogues of $H_\pm$ at the Landau levels. Such a study was undertaken in \cite{PuRo07} for the Dirichlet case, in \cite{Pe09, GoKaPe16} for the Neumann case, and in \cite{GoKaPe16} for Robin boundary conditions. \\

    \section{Proofs of the auxiliary results}
    \label{supersection}
    \subsection{Proof of Proposition \ref{p10}}
    \label{s2a}
    We start with the following key

\belel{l1}
Let $\omega \in C_0^\infty(\re^3; [0,1])$ such that $\omega = 1$ in a vicinity of $\Gamma$. Then we have
    \bel{2}
    \srvpm = \srvpm H_0 \omega H_0^{-1}.
    \ee
    \ele
    \bepf
    Let $P_\pm$ be the orthogonal projection onto $({\rm Ker}\,V_\pm)^\perp$. Then, $\srvpm = \srvpm P_\pm$. Set $\widetilde{\omega} : = 1 - \omega$. Note that $\widetilde{\omega}$ vanishes in vicinity of $\Gamma$. We have
    $$
    \srvpm = \srvpm P_\pm H_0 (\omega + \widetilde{\omega}) H_0^{-1}.
    $$
    Therefore, in order to prove \eqref{2}, it suffices to show that
    \bel{1}
    P_\pm H_0 \widetilde{\omega} H_0^{-1} = 0.
    \ee
    Define the operator $H_{00} : = -\Delta_A$ with  domain
    $$
    \gD(H_{00}) : = \left\{u \in {\rm H}_A^2(\re^3) \, | \, u_{|\Gamma} = \nu \cdot \Pi(A)u_{|\Gamma} = 0\right\}.
    $$
    Thus the operators $H_0$, $H_+$, and $H_-$ are extensions of the operator $H_{00}$. If $u \in L^2(\re^3)$, then
    $\widetilde{\omega} H_0^{-1} u \in \gD(H_{00})$ and $H_j \,\widetilde{\omega} H_0^{-1} u = H_{00} \,\widetilde{\omega} H_0^{-1} u$, $j=0,+,-$.
    Therefore, $V_\pm H_0 \,\widetilde{\omega} H_0^{-1} u = 0$, i.e. $H_0 \,\widetilde{\omega} H_0^{-1} u \in {\rm Ker}\, V_\pm$ which implies that \eqref{1} holds true.
    \epf

    Further, we note that
    \bel{3}
    H_0 \omega H_0^{-1}  = \omega + [H_0,\omega] H_0^{-1}
    \ee
     and obtain a convenient representation of the commutator
    $[H_0,\omega]$.\\
    To this end, we introduce the Landau Hamiltonian $\Hope$, i.e. the 2D Schr\"odinger operator with constant scalar magnetic field $b>0$,
\bel{D132}
\Hope =
\left(-i\frac{\partial}{\partial x_1} +  \frac{b x_2}{2}\right)^2  +
\left(-i\frac{\partial}{\partial x_2} -  \frac{b x_1}{2}\right)^2, \quad \xpe = (x_1,x_2) \in \rd,
\ee
essentially self-adjoint on $C_0^\infty(\rd)$, and self-adjoint in $L^2(\rd)$.
We have
$$
\Hope = a^* a + b
$$
where
    \bel{f19}
a^* = -2i e^{\phi} \frac{\partial}{\partial \zeta} e^{-\phi} = -2i \left(\frac{\partial}{\partial \zeta} - \frac{\partial \phi}{\partial \zeta}\right), \quad \zeta = x_1 + i x_2,
    \ee
is the magnetic creation operator,
$$
a = -2i e^{-\phi} \frac{\partial}{\partial \bar{\zeta}} e^{\phi }= -2i \left(\frac{\partial}{\partial \bar{\zeta}} + \frac{\partial \phi}{\partial \bar{\zeta}}\right), \quad \bar{\zeta} = x_1-ix_2,
$$
is the magnetic annihilation operator,
and $\phi(\xpe) : = \frac{b|\xp|^2}{4}$, $\xpe \in \rd$, so that $\Delta \phi = b$. \\
The operators $a$ and $a^*$ are closed on their common domain $\gD(a) = \gD(a^*) = \gD(\Hope^{1/2})$, they are mutually adjoint in $L^2(\rd)$, and satisfy
$$
[a, a^*] = 2b.
	$$
 It is well known that
    \bel{m10}
 \sigma(\Hope) = \bigcup_{j \in \Z_+} \left\{\Lambda_j\right\},\\[2mm]
    \ee
 $$
  {\rm Ker}\,(\Hope - \Lambda_j) = (a^*)^j {\rm Ker}\,a, \quad j \in \Z_+,
  $$
  $$
  {\rm Ker}\,a : = \left\{u \in L^2(\rd) \, | \, u = g e^{-\phi}, \quad \frac{\partial g}{\partial \overline{\zeta}} = 0\right\},
  $$
  and, accordingly,
  \bel{m10a}
  {\rm dim \; Ker}\,(\Hope - \Lambda_j) = \infty, \quad j \in \Z_+.
  \ee
Denote by $p_j$ the orthogonal projection onto $ {\rm Ker}\,(\Hope - \Lambda_j)$,  $j \in \Z_+$.
Next, set
$$
\Hopa : = - \frac{d^2}{d\xpa^2}, \quad \gD(\Hopa) = {\rm H}^2(\re).
$$
Then we have
$$
H_0 = \Hope \otimes \Ipa + \Ipe \otimes \Hopa
$$
where $\Ipe$ and $\Ipa$ are the identities in $L^2(\rd_{\xpe})$ and $L^2(\re_{\xpa})$ respectively, and a simple calculation implies the following
\belel{l2}
Let $\omega \in C_0^\infty(\rd;\re)$. Then we have
    \bel{4}
    K(\omega) : = [H_0,\omega]  = -\Delta \omega + \sum_{j=1}^3 \omega_j G_j = \Delta \omega + \sum_{j=1}^3 G_j \omega_j = - K(\omega)^*,
    \ee
    where
    $$
    \omega_1 : = - 2i \frac{\partial \omega}{\partial \bar{\zeta}}, \quad \omega_2 : = - 2i \frac{\partial \omega}{\partial \zeta}, \quad \omega_3 : = - 2 \frac{\partial \omega}{\partial \xpa},
    $$
    $$
    G_1 : = a^* \otimes \Ipa, \quad G_2 : = a \otimes \Ipa, \quad G_3 : = \Ipe \otimes \pd,
    $$
    and
    $$
    \pd: = \frac{d}{d\xpa}, \quad \gD(\pd) : = {\rm H}^1(\re).
    $$
    \ele
    Note that ${\rm supp}\,\Delta \omega \subset {\rm supp}\,\omega$ and ${\rm supp}\,\omega_j \subset {\rm supp}\,\omega$, $j=1,2,3$. Moreover,
    the operators $K(\omega) H_0^{-1}$ and, hence, $H_0^{-1} K(\omega)^*$ are compact in $L^2(\re^3)$.

    \belel{l4}
    Let $\eta \in C_0^\infty(\re^3)$, $j=0,+,-.$\\
    {\rm (i)} We have
    \bel{49}
    \eta H_j^{-1/2} \in \gS_4,
    \ee
    and hence
    \bel{49a}
    \eta H_j^{-1} \eta \in \gS_2.
    \ee
    {\rm (ii)} Moreover,
    \bel{50}
    \eta H_j^{-1} \in \gS_2.
    \ee
    \ele
    \bepf
    The validity of \eqref{49} and \eqref{50} follows easily from the diamagnetic inequality (see e.g. \cite{AvHeSi78} and \cite{HuLeMuWa01a}), and the results of \cite{BiSo80} concerning the spectral properties of elliptic non-magnetic differential operators.
    \epf

    Now we are in position to prove Proposition \ref{p10}. As above, let $\omega \in C_0^\infty(\re^3; [0,1])$ satisfy $\omega = 1$ in a vicinity of $\Gamma$, and let $\eta \in C_0^\infty(\re^3; [0,1])$ satisfy $\eta = 1$ in a vicinity of ${\rm supp}\,\omega$. By Lemma \ref{l1} and \eqref{3}, we have
    $$
    V_\pm = (H_0^{-1} K^* + \omega) \eta V_\pm \eta (\omega +  K H_0^{-1}).
    $$
    Since $\eta V_\pm \eta \in \gS_2$ by \eqref{49a}, and the operators $H_0^{-1} K^* + \omega$ and $\omega +  K H_0^{-1}$ are bounded, we obtain \eqref{46}. %Inclusion \eqref{47} follows easily from
    %\eqref{46}, Lemma \ref{l4} (ii) and the representation
    Let us now prove \eqref{47}. Write
    \bel{fin1}
    H_\pm^{-2} - H_0^{-2} = \mp V_\pm (\omega +  K(\omega) H_0^{-1}) H_0^{-1} \mp H_\pm^{-1} (H_0^{-1} K^*(\omega) + \omega) V_\pm.
    \ee
    Let us show that
    \bel{fin7}
    V_\pm (\omega +  K(\omega) H_0^{-1}) H_0^{-1} \in \gS_1.
    \ee
    By \eqref{46} we have $V_\pm \in \gS_2$, \eqref{50} implies $\omega H_0^{-1} \in \gS_2$, and therefore
    \bel{fin2}
    V_\pm \omega H_0^{-1} \in \gS_1.
    \ee
    Further, let $\theta \in C_0^\infty(\rt; [0,1])$ satisfy $\theta = 1$ on ${\rm supp}\,\omega$. Then, by \eqref{4}, we have
    \bel{fin3}
    V_\pm K(\omega) H_0^{-2} = V_\pm K(\omega) \theta H_0^{-2} = V_\pm K(\omega) H_0^{-1} \theta H_0^{-1} + V_\pm K(\omega) H_0^{-1} K(\theta) H_0^{-2}.
    \ee
    Since $V_\pm, \theta H_0^{-1} \in \gS_2$, and $K(\omega) H_0^{-1}$ is bounded, we get
    \bel{fin4}
    V_\pm K(\omega) H_0^{-1} \theta H_0^{-1} \in \gS_1.
    \ee
    Further, by \eqref{4}, we have
    \bel{fin5}
    V_\pm K(\omega) H_0^{-1} K(\theta) H_0^{-2} = V_\pm K(\omega) H_0^{-1} \left(\Delta \theta + \sum_{j=1}^3 G_j \theta_j\right) H_0^{-2}.
    \ee
    Since $V_\pm, \Delta \theta\, H_0^{-1}, \theta_j H_0^{-1} \in \gS_2$, while the operators $K(\omega) H_0^{-1}$, $K(\omega) H_0^{-1} G_j$ are bounded,
    we find that \eqref{fin5} yields $ V_\pm K(\omega) H_0^{-1} K(\theta) H_0^{-2} \in \gS_1$ which combined with \eqref{fin2}, \eqref{fin3}, and \eqref{fin4} implies \eqref{fin7}. In a similar manner we prove that
    \bel{fin8}
    H_\pm^{-1} (H_0^{-1} K^*(\omega) + \omega) V_\pm \in \gS_1.
    \ee
    Putting together \eqref{fin1}, \eqref{fin7}, and \eqref{fin8}, we obtain \eqref{47}.

\subsection{Proof of Proposition \ref{p8}}
\label{s3}

   Let $z \in \C_-$. Combining \eqref{2} and \eqref{3} with  \eqref{4}, we find that
    \bel{7}
    %\srvpm (H_0^{-1} - z^{-1})^{-1} \srvpm
    T^\pm(z) =
    \srvpm (\omega + K(\omega)H_0^{-1}) (H_0^{-1} - z^{-1})^{-1} (\omega + H_0^{-1} K(\omega)^*) \srvpm.
    \ee
   % The choice of the signs of ${\rm Re}\,z$ and  ${\rm Im}\,z$ in \eqref{7} is determined by \eqref{6} where $z = \frac{E}{E + i\delta E}$ with $E>0$ and $\delta > 0$, so that ${\rm Re}\,z > 0$ and ${\rm %Im}\,z < 0$. \\
    Evidently,
    \bel{8}
    (H_0^{-1} - z^{-1})^{-1} = -z^2 (H_0 - z)^{-1} - z,
    \ee
    \bel{au2}
    H_0^{-1} (H_0 -z)^{-1} = (H_0 -z)^{-1} H_0^{-1} = \frac{1}{z} (H_0-z)^{-1} - \frac{1}{z} H_0^{-1},
    \ee
   \bel{au3}
     H_0^{-1} (H_0 -z)^{-1} H_0^{-1} = \frac{1}{z^2}  (H_0-z)^{-1} - \frac{1}{z^2} H_0^{-1} - \frac{1}{z} H_0^{-2}.
     \ee
     Combining \eqref{7} with \eqref{8} -- \eqref{au3}, and taking into account that $\srvpm (\omega + K H_0^{-1}) = \srvpm$, we get
    \bel{9}
    %\srvpm (H_0^{-1} - z^{-1})^{-1} \srvpm
     T^\pm(z) = M^\pm_1(z) + R^\pm_1(z)
     \ee
     where the {\em main term} is
     \bel{9a}
     M^\pm_1(z) : = -\srvpm (z\omega + K)  (H_0 - z)^{-1}  (z\omega + K^*) \srvpm,
     \ee
     while the {\em rest} is
     $$
     R^\pm_1(z) : =
    z\srvpm (\omega   H_0^{-1}  K^* +  K   H_0^{-1} \omega  +  K   H_0^{-2}  K^* - I) \srvpm  + \srvpm K   H_0^{-1}  K^* \srvpm.
     $$
    Since $R^\pm_1$ extends to an affine function form $\C$ to $\gS_2$, we obtain the following elementary
 \beprl{p13}
     For every $E \in \re$ there exists $R^\pm_1(E) = R^\pm_1(E)^* \in \gS_2$ such that
     $$
   \lim_{\C_- \ni z \to E} \|R^\pm_1(z) - R^\pm_1(E)\|_2 = 0,
    $$
     $R^\pm_1(E)$ depends continuously in $\gS_2$ on $E$, and
    $$
    \|R^\pm_1(E)\|_2 = O(|E| + 1), \quad E \in \re.
    $$
     \epr

    Set $P_j : = p_j \oplus \Ipa$, $j \in \Z_+$. For a given $q \in \Z_+$ put
    $$
    P^{\leq}_q : = \sum_{j\leq q} P_j, \quad P_q^{>}: = \sum_{j > q} P_j.
    $$
    Thus, $P_q^{\leq}$ and $P_q^>$ are orthogonal projections in $L^2(\re^3)$, and  $P_q^{\leq} + P_q^> = I$. Taking into account \eqref{9a}, we find that
    $$
    M^\pm_1(z) = M^\pm_2(z) + R^\pm_2(z)
    $$
    where
    $$
    M^\pm_2(z) = M^\pm_2(z;q) : = -\srvpm (z\omega + K) P_q^{\leq} (H_0 - z)^{-1}  (z\omega + K^*) \srvpm,
    $$

  $$
     R^\pm_2(z) = R^\pm_2(z;q) : = -\srvpm (z\omega + K) P_q^{>} (H_0 - z)^{-1}  (z\omega + K^*) \srvpm.
    $$
   % Using the arguments of \cite{BrPuRa04} and \cite{FeRa04}, we easily obtain the following

    \beprl{p11}
    Fix $q \in \Z_+$. For every $E \in (-\infty, \Lambda_{q+1})$ there exists $R^\pm_2(E) = R^\pm_2(E)^* \in \gS_2$ such that
     $$
   \lim_{\C_- \ni z \to E} \|R^\pm_2(z) - R^\pm_2(E)\|_2 = 0,
    $$
    $R^\pm_2(E)$ depends continuously in $\gS_2$ on $E$, and
    $$
    \|R^\pm_2(E)\|_2 = O\left(\left(E^2+1\right)\left(1+ |E|(\Lambda_{q+1}-E)^{-1}\right)\right), \quad E \in (-\infty, \Lambda_{q+1}).
    $$
    \epr
    \begin{proof}
    We have
    $$
    R^\pm_2(z)  = -\srvpm (z\omega + K)H_0^{-1/2} \left(P_q^>\left(I + z (H_0 - z)^{-1}\right)\right) H_0^{-1/2} (z\omega + K^*) \srvpm.
    $$
    Now the claims of the proposition follow from the facts that by \eqref{46} we have $V_\pm \in \gS_2$, the operators $\omega H_0^{-1/2} $, $K H_0^{-1/2} $, and $P_q^>$, are bounded,
    $$
    {\rm n}-\lim_{\delta \to 0} P_q^> (H_0 - E + i\delta))^{-1} = \sum_{j > q} p_j \otimes (\Hopa + \Lambda_j - E)^{-1},
    $$
   the operator $\sum_{j > q} p_j \otimes (\Hopa + \Lambda_j - E)^{-1}$ depends continuously in $\gB$ on $E \in (-\infty,\Lambda_{q+1})$, and
    $$
    \|\sum_{j > q} p_j \otimes (\Hopa + \Lambda_j - E)^{-1}\| = (\Lambda_{q+1} - E)^{-1}.
    $$
    \end{proof}
    Further,
    $$
     M^\pm_2(z;q) = \sum_{j \leq q} M^\pm_{2,j}(z)
     $$
     where
     $$
     M^\pm_{2,j}(z) : = -\srvpm (z\omega + K) P_j (H_0 - z)^{-1} (z\omega + K^*) \srvpm, \quad z \in \C_-, \quad j \in \Z_+.
     $$
     Let $\omega_4 \in C_0^\infty(\re; [0,1])$ be such a function that $\omega_4(\xpa) = 1$ if $\xpa \in \pipa({\rm supp}\,\omega)$. Then,
     $$
     M^\pm_{2,j}(z) : = -\srvpm (z\omega + K) P_j \, \left(p_j \otimes \left(\omega_4 (\Hopa + \Lambda_j - z)^{-1} \omega_4\right)\right) \, P_j (z\omega + K^*) \srvpm, \quad j \leq q.
     $$
     Define the operator
     \bel{m20}
     L_j(z) : = (z\omega - \Delta \omega + \omega_1 G_1 + \omega_2 G_2)P_j, \quad z \in \C, \quad j \in \Z_+,
     \ee
     so that $(z\omega + K)P_j  = L_j(z) + \omega_3 G_3 P_j$.
     Set
     $$
     \cR(z) = \omega_4 (\Hopa  - z)^{-1} \omega_4, \quad \widetilde{\cR}(z) = \omega_4 \pd (\Hopa  - z)^{-1} \omega_4, \quad z \in \C_-.
     $$
     Then we have
     $$
     M^\pm_{2,j}(z) =
     $$
     $$
     -\srvpm \left(L_j(z) \; p_j \otimes \cR(z-\Lambda_j) L_j(\bar{z})^* - \omega_3  \; p_j \otimes (\Ipa - (\Lambda_j - z) \cR(z-\Lambda_j)) \; \omega_3\right)\srvpm
      $$
      \bel{43}
      + \srvpm \left(L_j(z) \; p_j \otimes \widetilde{\cR}(z-\Lambda_j) \; \omega_3 + \omega_3  \; p_j \otimes \widetilde{\cR}(z-\Lambda_j) \; L_j(\bar{z})^* \right) \srvpm.
     \ee
     \belel{l3} Let $E \in \re \setminus \{0\}$. Then there exist operators $\cR(E) , \widetilde{\cR}(E) \in \gS_2(L^2(\re))$ such that
     $$
     {\rm n}-\lim_{\C_- \ni z \to E}\|\cR(z)-\cR(E)\|_2, \quad   {\rm n}-\lim_{\C_- \ni z \to E}\|\widetilde{\cR}(z) -\widetilde{\cR}(E)\|_2.
     $$
     Moreover, the operator $\cR(E)$ admits the integral kernel
     \bel{sep4}
     \cK_E(\xpa,\xpa') = \left\{
     \begin{array} {l}
     \frac{1}{2\sqrt{|E|}}w_4(\xpa) e^{-\sqrt{|E|}|\xpa-\xpa'|} w_4(\xpa'), \quad E<0,\\[4mm]
     -\frac{i}{2\sqrt{E}}w_4(\xpa) e^{-i\sqrt{E}|\xpa -\xpa'|} w_4(\xpa'), \quad E>0,
     \end{array}
     \right.
     \quad
     \xpa, \xpa' \in \re,
    \ee
     while the operator $\widetilde{\cR}(E)$ admits the integral kernel
     $$
     \widetilde{\cK}_E(\xpa,\xpa') = \left\{
     \begin{array} {l}
     \frac{-{\rm sign}\,(\xpa-\xpa')}{2}w_4(\xpa) e^{-\sqrt{|E|}|\xpa-\xpa'|} w_4(\xpa'), \quad E<0,\\[4mm]
    -\frac{{\rm sign}\,(\xpa-\xpa')}{2} w_4(\xpa) e^{-i\sqrt{E}|\xpa -\xpa'|} w_4(\xpa'), \quad E>0,
     \end{array}
     \right.
     \quad
     \xpa, \xpa' \in \re,
     $$
     so that $\cR(E)$ and $\widetilde{\cR}(E)$ depend continuously in $\gS_2(L^2(\re))$ on $E \in \re \setminus \{0\}$, and
     $$
      \|\cR(E)\|_2 \leq (2\sqrt{|E|})^{-1} \|\omega_4\|_{L^2(\re)}^2, \quad \|\widetilde{\cR}(E)\|_2 \leq 2^{-1} \|\omega_4\|_{L^2(\re)}^2, \quad E \in \re\setminus \left\{0\right\}.
      $$
      \ele
      We omit the  proof based on elementary facts from complex and functional analysis. \\

      {\em Remark}: In fact, $\cR(E) \in \gS_1(L^2(\re))$ (see \cite[Eq. (4.4)]{BrPuRa04}) but we will not use this in the article. \\

      For $j \in \Z_+$ and $E \in \re \setminus \{\Lambda_j\}$ set
      $$
       M^\pm_{2,j}(E) =
     $$
     $$
     -\srvpm \left(L_j(E) \; p_j \otimes \cR(E-\Lambda_j) L_j(E)^* - \omega_3 \; p_j \otimes (\Ipa - (\Lambda_j - E) \cR(E-\Lambda_j)) \; \omega_3\right)\srvpm
      $$
      \bel{48}
      + \srvpm \left(L_j(E) \; p_j \otimes \widetilde{\cR}(E-\Lambda_j) \; \omega_3 + \omega_3 P_j \; p_j \otimes \widetilde{\cR}(E-\Lambda_j) \; L_j(E)^* \right) \srvpm.
     \ee

      \beprl{p14}
      Let $j \in \Z_+$ and $E \in \re \setminus \{\Lambda_j\}$. Then we have $${\rm Re}\, M^\pm_{2,j}(E) \in \gS_2, \quad {\rm Im}\, M^\pm_{2,j}(E) \in \gS_1,$$
      $$
       \lim_{\C_- \ni z \to E}\|M^\pm_{2,j}(z) -  M^\pm_{2,j}(E)\|_2 = 0,
       $$
     the operator ${\rm Re}\,M^\pm_{2,j}(E)$ {\rm (}resp., ${\rm Im}\,M^\pm_{2,j}(E)${\rm )} depends continuously in $\gS_2$ {\rm (} resp., in $\gS_1${\rm )} on $E$,
     and
     $$
    \|{\rm Re}\,M^\pm_{2,j}(E)\|_2, \; \|{\rm Im}\,M^\pm_{2,j}(E)\|_1 = O\left(\left(E^2 + 1\right)|E-\Lambda_j|^{-1/2}\right), \quad E \in \re\setminus\{\Lambda_j\}.
     $$
     \epr
     \begin{proof}
     Set
     $$
     F_{1,j}(E) : = L_j(E) \; p_j \otimes \cR(E-\Lambda_j) L_j(E)^*, \quad
     F_{2,j}(E) : = - (\Lambda_j - E) \omega_3  \; p_j \otimes \cR(E-\Lambda_j) \; \omega_3,
     $$
      $$
     F_{3,j}(E) : = - L_j(E) \; p_j \otimes \widetilde{\cR}(E-\Lambda_j) \; \omega_3, \quad
     F_{4,j}(E) : = - \omega_3 P_j \; p_j \otimes \widetilde{\cR}(E-\Lambda_j) \; L_j(E)^* .
     $$
     Then,
     \bel{au10}
     M^\pm_{2,j}(E)  = - \sum_{\ell=1}^4 \srvpm F_{\ell, j}(E) \srvpm - \srvpm \omega_3 P_j \omega_3 \srvpm
     \ee
     so that
     $$
     {\rm Re}\, M^\pm_{2,j}(E)  = - \sum_{\ell=1}^4 {\rm Re}\,(\srvpm F_{\ell, j}(E) \srvpm) - \srvpm \omega_3 P_j \omega_3 \srvpm,
       $$
       $$
       {\rm Im}\,M^\pm_{2,j}(E)  = - \sum_{\ell=1}^4 {\rm Im}\,(\srvpm F_{\ell, j}(E) \srvpm).
     $$
      Taking into account Lemma \ref{l3} and the facts that the orthogonal projection $p_j$ has an integral kernel  in $C^\infty(\rd \times \rd)$ while the functions $\omega$ and $\omega_k$, $k=1,2,3$, are in $C_0^\infty(\re^3)$, we find that $F_{\ell,j}$, $\ell = 1,\ldots,4$,  are continuous functions from $\re\setminus \{\Lambda_j\}$ to $\gS_2$, and
      %\bel{sep1a}
      $$
      \|F_{1,j}(E)\|_2 = O\left(\left(E^2 + 1\right)|E-\Lambda_j|^{-1/2}\right),
      $$
     % \ee
      \bel{sep2}
      \|F_{2,j}(E)\|_2 = O\left(\left(|E| + 1\right)|E-\Lambda_j|^{1/2}\right),
      \ee
     \bel{sep3}
      \|F_{\ell,j}(E)\|_2 = O\left(|E| + 1\right), \quad \ell = 3,4.
      \ee
      Since, by \eqref{46}, we have $V_\pm \in \gS_2$, we find that $\srvpm F_{\ell, j}(E) \srvpm \in \gS_1$. Moreover, the continuity of $F_{\ell, j}$ in $\gS_2$ implies the continuity of $\srvpm F_{\ell, j}(E) \srvpm$ in $\gS_1$, and
      $$
      \|\srvpm F_{\ell, j}(E) \srvpm\|_1 \leq \|V_\pm\|_2 \, \|F_{\ell, j}\|_2, \quad \ell =1,\ldots,4.
      $$
      Finally, by \eqref{46}, we have
      $\srvpm \omega_3 P_j \omega_3 \srvpm \in \gS_2$. Therefore, the claims of the proposition follow from  representation \eqref{au10} and the properties of  $F_{\ell, j}$ established above.

     \end{proof}

Now Proposition \ref{p8} follows from Propositions \ref{p11}, \ref{p13}, and \ref{p14}.

%\subsection{{\Rd Proof of Proposition \ref{p9}}}
%\label{s4}

\subsection{Proof of Proposition \ref{fp1}}
\label{fs2}
As above, we denote by $X$ a separable Hilbert space.
\begin{lemma} \label{lau2} {\rm \cite[Lemma 2.1]{Pu97}}
Let $T_1 = T_1^* \in \gS_\infty(X)$,  $T_2 = T_2^* \in \gS_1(X)$. Then for any $s>0$ we have
$$
\frac{1}{\pi} \int_\re n_\pm(s; T_1 + t T_2) \frac{dt}{1+t^2}\leq n_\pm(s/2, T_1) + \frac{2}{\pi s} \|T_2\|_1.
$$
\end{lemma}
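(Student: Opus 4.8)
The plan is to combine a Weyl-type inequality for the counting functions $n_\pm$ with a change of variables that turns the Cauchy weight $\frac{dt}{\pi(1+t^2)}$ into a bound by the trace norm of $T_2$.

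First I would record the Weyl inequality: for self-adjoint compact operators $A,B$ and $s_1,s_2>0$,
\[
n_\pm(s_1+s_2;A+B)\le n_\pm(s_1;A)+n_\pm(s_2;B),
\]
which is immediate from the min-max characterisation $n_+(s;T)=\max\{\dim L : \langle Tu,u\rangle>s\|u\|^2 \text{ for all } u\in L\setminus\{0\}\}$. Applying it with $A=T_1$, $B=tT_2$, $s_1=s_2=s/2$ gives $n_\pm(s;T_1+tT_2)\le n_\pm(s/2;T_1)+n_\pm(s/2;tT_2)$ for every $t\in\re$. (The map $t\mapsto n_\pm(s;T_1+tT_2)$ is lower semicontinuous at every $t$ with $s\notin\sigma_p(\pm(T_1+tT_2))$, hence measurable; this routine point is settled at the outset.) Integrating against $\frac{dt}{\pi(1+t^2)}$ and using $\frac1\pi\int_\re\frac{dt}{1+t^2}=1$, the first term contributes exactly $n_\pm(s/2;T_1)$, so it remains to prove $\frac1\pi\int_\re n_\pm(\tfrac s2;tT_2)\frac{dt}{1+t^2}\le\frac{2}{\pi s}\|T_2\|_1$.

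For this I split the integral at $t=0$: for $t>0$ one has $n_\pm(\tfrac s2;tT_2)=n_\pm(\tfrac{s}{2t};T_2)$, while for $t<0$, writing $t=-|t|$ and using $\one_{(s/2,\infty)}(-|t|T_2)=\one_{(s/(2|t|),\infty)}(-T_2)$, the two signs get exchanged: $n_\pm(\tfrac s2;tT_2)=n_\mp(\tfrac{s}{2|t|};T_2)$. In each of the four resulting integrals I substitute $r=\tfrac{s}{2|t|}$, under which $\frac{dt}{1+t^2}$ becomes $\frac{2s\,dr}{4r^2+s^2}$, a weight bounded by $\frac2s$ uniformly in $r$; hence, for instance,
\[
\frac1\pi\int_0^\infty n_+(\tfrac{s}{2t};T_2)\frac{dt}{1+t^2}=\frac1\pi\int_0^\infty n_+(r;T_2)\frac{2s\,dr}{4r^2+s^2}\le\frac{2}{\pi s}\int_0^\infty n_+(r;T_2)\,dr,
\]
and the last integral equals $\sum_{\lambda_j(T_2)>0}\lambda_j(T_2)={\rm Tr}\,(T_2)_+$ by the layer-cake identity (valid since $T_2\in\gS_1$). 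Treating the other three pieces identically and summing bounds the $t$-integral by $\frac{2}{\pi s}({\rm Tr}\,(T_2)_++{\rm Tr}\,(T_2)_-)=\frac{2}{\pi s}\|T_2\|_1$, which is the claim for $n_+$; the $n_-$ statement then follows by applying the $n_+$ case to the pair $(-T_1,-T_2)$.

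The only genuine obstacle is the behaviour near $t=0$: there $tT_2$ is small in norm, yet $n_\pm(\tfrac s2;tT_2)$ — the number of eigenvalues of $\pm T_2$ above the large threshold $\tfrac{s}{2|t|}$ — may be large, so a crude estimate diverges. The substitution $r=\tfrac{s}{2|t|}$ is precisely what recasts the contribution as the convergent integral $\int_0^\infty n_\pm(r;T_2)\,dr={\rm Tr}\,(T_2)_\pm$; everything else is elementary.
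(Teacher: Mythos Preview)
The paper does not actually prove this lemma; it is quoted from \cite[Lemma 2.1]{Pu97} without argument. So there is no ``paper's proof'' to compare against, and I simply assess your proposal on its merits.

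Your argument is correct in substance. The Weyl splitting $n_\pm(s;T_1+tT_2)\le n_\pm(s/2;T_1)+n_\pm(s/2;tT_2)$ (this is exactly Lemma~\ref{lau1} in the paper), followed by the change of variable $r=s/(2|t|)$ in the second term and the layer-cake identity $\int_0^\infty n_\pm(r;T_2)\,dr={\rm Tr}\,(T_2)_\pm$, gives precisely the stated bound. Two cosmetic points: (i) for a fixed sign there are only \emph{two} pieces ($t>0$ and $t<0$), not four --- the sum of their bounds is $\frac{2}{\pi s}({\rm Tr}\,(T_2)_+ + {\rm Tr}\,(T_2)_-)=\frac{2}{\pi s}\|T_2\|_1$, as you wrote; (ii) your final ``obstacle'' paragraph has the picture backwards. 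Near $t=0$ the threshold $s/(2|t|)$ is \emph{large}, so $n_\pm(s/2;tT_2)$ is \emph{small} (indeed zero once $|t|<s/(2\|T_2\|)$); the nontrivial region is $|t|\to\infty$, where the threshold tends to zero and the count may be unbounded. It is there that the substitution rescues the estimate, converting the Cauchy tail into the convergent integral $\int_0^\infty n_\pm(r;T_2)\,dr$. This is only a mis-statement in the commentary; the computation itself is fine.
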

Our next lemma contains an elementary Chebyshev-type estimate for the eigenvalue counting functions of compact operators.
\begin{lemma} \label{lau3}
Let $T= T^* \in \gS_p(X)$, $p \in [1,\infty)$. Then for any $s>0$ we have
$$
n_*(s; T) : = n_+(s; T) + n_-(s; T) \leq s^{-p} \|T\|^p_p.
$$
\end{lemma}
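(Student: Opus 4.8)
The plan is to reduce the statement to the classical Chebyshev inequality applied to the sequence of eigenvalues of $T$. Since $T = T^* \in \gS_p(X) \subset \gS_\infty(X)$ is self-adjoint and compact, it has a discrete spectrum off the origin; let $\{\lambda_k\}_{k}$ be its nonzero eigenvalues, listed with multiplicities. The first step is to recall that the singular values of a self-adjoint compact operator are exactly the moduli $|\lambda_k|$ of its eigenvalues, so that
$$
\|T\|_p^p = \operatorname{Tr}\,(T^*T)^{p/2} = \sum_k |\lambda_k|^p .
$$

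Next I would observe that, by the definitions of $n_+$ and $n_-$, the quantity $n_*(s;T) = n_+(s;T) + n_-(s;T)$ is precisely the number of indices $k$ for which $|\lambda_k| > s$. For every such index we have $|\lambda_k|^p > s^p$, i.e. $s^{-p}|\lambda_k|^p > 1$. Summing this trivial bound over the (finitely many) indices with $|\lambda_k| > s$, and then extending the sum to all $k$ (which only increases it, all terms being nonnegative), gives
$$
n_*(s;T) = \sum_{k:\,|\lambda_k|>s} 1 \;\le\; \sum_{k:\,|\lambda_k|>s} \frac{|\lambda_k|^p}{s^p} \;\le\; \frac{1}{s^p}\sum_{k} |\lambda_k|^p \;=\; s^{-p}\,\|T\|_p^p ,
$$
which is the claimed estimate.

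There is no real obstacle here: the only point requiring a word of justification is the identification of the singular values of $T=T^*$ with $\{|\lambda_k|\}$, which is standard (it follows from the spectral theorem for compact self-adjoint operators, since $(T^*T)^{1/2} = |T|$ has eigenvalues $|\lambda_k|$ on the orthogonal complement of $\operatorname{Ker}T$). The rest is the one-line Chebyshev argument above, and the inequality holds for all $s>0$ with both sides possibly infinite only if $T \notin \gS_p$, which is excluded by hypothesis.
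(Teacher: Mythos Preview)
Your proof is correct and is precisely the elementary Chebyshev argument the paper has in mind; indeed, the paper does not spell out a proof at all but simply introduces the lemma as ``an elementary Chebyshev-type estimate for the eigenvalue counting functions of compact operators.'' Your write-up fills in exactly those details.
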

By Lemma \ref{lau2} with $s=1$, and Lemma \ref{lau3} with $s=1/2$ and $p=2$, we obtain
    \bel{au60}
    |\tilde{\xi}(E;H_\pm, H_0)| \leq 4\|{\rm Re}\,T^\pm(E)\|_2^2 + \frac{2}{\pi} \|{\rm Im}\,T^\pm(E)\|_1.
    \ee
    Combining \eqref{au60} with Proposition \ref{p8}, we find that $\tilde{\xi}(E;H_\pm, H_0)$ is well defined for any $E \in (0,\infty)\setminus b(2\Z_++1)$, and $\tilde{\xi}(\cdot;H_\pm, H_0)$ is bounded on every compact subset of
 $(0,\infty)\setminus b(2\Z_++1)$. \\
    Let us now prove the continuity of $\tilde{\xi}(\cdot;H_\pm, H_0)$ following the main ideas of the proof of the continuity part of \cite[Proposition 2.5]{BrPuRa04}. Let $E_0 \in (0,\infty)\setminus b(2\Z_++1)$. Assume that
    \bel{au61}
    \lim_{E \to E_0} \|{\rm Re}\,T^\pm(E) - {\rm Re}\,T^\pm(E_0)\| =  \lim_{E \to E_0} \|{\rm Im}\,T^\pm(E) - {\rm Re}\,T^\pm(E_0)\|_1 = 0,
    \ee
    \bel{au62}
    \pm 1 \not \in \sigma(T^\pm(E_0)).
    \ee
    Then, by  \cite[Lemma 2.5]{Pu97} we have
    $$
     \lim_{E \to E_0} \tilde{\xi}(E;H_\pm, H_0) = \tilde{\xi}(E_0;H_\pm, H_0).
     $$
     Proposition \ref{p8} implies \eqref{au61} for any $E_0 \in (0,\infty)\setminus b(2\Z_++1)$. Moreover, \eqref{au62} with $E_0 \in (0,\infty)\setminus (\sigma_{\rm p}(H_\pm) b(2\Z_++1))$ will follow from
     \begin{lemma} \label{lau4}
     Let $E \in (0,\infty)\setminus b(2\Z_++1)$. Assume that
     \bel{au98} \pm 1 \in \sigma_{\rm p}(T^\pm(E)). \ee
     Then
     \bel{au99}
     E \in \sigma_{\rm p}(H_\pm).
     \ee
     \end{lemma}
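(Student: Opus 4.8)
The plan is to run a Birman--Schwinger argument based on the operator identity $H_\pm^{-1} = H_0^{-1} \mp V_\pm$ (with $V_\pm = \srvpm\srvpm$) and on the limiting absorption principle for $H_0$, the latter made explicit through the fibered structure $H_0 = \Hope\otimes\Ipa + \Ipe\otimes\Hopa$ already exploited in Lemma \ref{l3}. So let $\phi \in L^2(\re^3)$, $\phi\neq 0$, satisfy $T^\pm(E)\phi = \pm\phi$. Writing $z = E - i\varepsilon \in \C_-$ and letting $\varepsilon\downarrow 0$, relations \eqref{8} and the definition \eqref{6} of the boundary value give
$$
T^\pm(E) = -E^2\,\srvpm\,(H_0 - E + i0)^{-1}\,\srvpm \;-\; E\,V_\pm .
$$
First I would note that for $z\in\C_-$ one has ${\rm Im}\,T^\pm(z) = \srvpm\,{\rm Im}\big[(H_0^{-1}-z^{-1})^{-1}\big]\,\srvpm = {\rm Im}(z^{-1})\,\srvpm\,\big|(H_0^{-1}-z^{-1})^{-1}\big|^2\,\srvpm \geq 0$, since ${\rm Im}(z^{-1})>0$; passing to the limit, ${\rm Im}\,T^\pm(E)\geq 0$. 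On the other hand $\langle T^\pm(E)\phi,\phi\rangle = \pm\|\phi\|^2\in\re$, whence $\langle ({\rm Im}\,T^\pm(E))\phi,\phi\rangle = {\rm Im}\langle T^\pm(E)\phi,\phi\rangle = 0$, and the non-negativity of ${\rm Im}\,T^\pm(E)$ forces ${\rm Im}\,T^\pm(E)\,\phi = 0$; morally, $\srvpm\phi$ carries no mass on the energy surface $\{H_0 = E\}$.

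The hard part is to upgrade ${\rm Im}\,T^\pm(E)\phi = 0$ to the statement that $g := \lim_{\varepsilon\downarrow 0}(H_0 - E + i\varepsilon)^{-1}\srvpm\phi$ exists in $L^2(\re^3)$, belongs to $\gD(H_0)$, and satisfies $(H_0 - E)g = \srvpm\phi$. Decomposing over the Landau levels, $(H_0 - E + i\varepsilon)^{-1} = \sum_{j\in\Z_+} p_j\otimes(\Hopa + \Lambda_j - E + i\varepsilon)^{-1}$; the terms with $\Lambda_j > E$ converge in operator norm (as in the proof of Proposition \ref{p11}), while for the finitely many $j$ with $\Lambda_j < E$ (no $j$ gives $\Lambda_j = E$ since $E\notin b(2\Z_+ + 1)$) the explicit kernels \eqref{sep4} of Lemma \ref{l3} show that ${\rm Im}\,T^\pm(E)\phi = 0$ amounts to the vanishing of the finitely many on-shell ``Fourier traces'' of $p_j\,\srvpm\phi$, which is exactly the condition under which $(\Hopa + \Lambda_j - E + i0)^{-1}$ applied to that component lies in $L^2(\re)$; uniform boundedness of $\|(H_0 - E + i\varepsilon)^{-1}\srvpm\phi\|$ in $\varepsilon$ then yields $g\in\gD(H_0)$ by closedness of $H_0$. (When $E\in(0,b)$ this step is vacuous, since $E^{-1}\notin\sigma(H_0^{-1})$ and the resolvent is bounded.) Everything after this is algebra.

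Finally, I would set $w := \pm\big(-E^2 g - E\,\srvpm\phi\big) \in L^2(\re^3)$. From $H_0 g = E g + \srvpm\phi$ one gets $H_0^{-1}g = E^{-1}(g - H_0^{-1}\srvpm\phi)$, hence, exactly as in the identity \eqref{au2}, $H_0^{-1}w = \mp E g$ and therefore $(H_0^{-1} - E^{-1})w = \pm\,\srvpm\phi$. Moreover $\srvpm w = \pm\big(-E^2\,\srvpm g - E\,V_\pm\phi\big) = \pm\,T^\pm(E)\phi = \phi$, which in particular forces $w\neq 0$. Consequently, using $H_\pm^{-1} = H_0^{-1}\mp \srvpm\srvpm$,
$$
(H_\pm^{-1} - E^{-1})\,w = (H_0^{-1} - E^{-1})w \mp \srvpm(\srvpm w) = \pm\,\srvpm\phi \mp\,\srvpm\phi = 0 .
$$
Since $0\in\rho(H_\pm)$, this means $w = E\,H_\pm^{-1}w \in\gD(H_\pm)$ and $H_\pm w = E w$ with $w\neq 0$, i.e. $E\in\sigma_{\rm p}(H_\pm)$, which is \eqref{au99}. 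The main obstacle, as indicated, is the passage ${\rm Im}\,T^\pm(E)\phi = 0 \Rightarrow g\in\gD(H_0)$, for which the fibered structure of $H_0$ and the resolvent kernels of Lemma \ref{l3} do the work.
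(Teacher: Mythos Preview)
Your overall strategy coincides with the paper's: deduce ${\rm Im}\,T^\pm(E)\phi=0$ from the eigenvalue equation, use the fiber decomposition of $H_0$ over the Landau levels to produce $g=(H_0-E+i0)^{-1}\srvpm\phi\in L^2(\re^3)$, and then close with the Birman--Schwinger algebra for $w=\pm(-E^2g-E\srvpm\phi)$; the algebraic part is correct and your $w$ is $\pm$ the paper's eigenfunction $\psi$.

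The genuine gap is in the step you yourself flag as the main obstacle. You assert that ${\rm Im}\,T^\pm(E)\phi=0$ amounts to the vanishing of the on-shell Fourier traces of $p_j\srvpm\phi$ at $\pm\sqrt{E-\Lambda_j}$, and that this suffices for $(\Hopa+\Lambda_j-E+i0)^{-1}$ applied to that component to land in $L^2(\re)$. Neither claim holds for a bare $L^2$ function: pointwise values of its Fourier transform are undefined, and even when they vanish, division by $\xi^2-(E-\Lambda_j)$ need not return an $L^2$ function without some H\"older regularity of the transform near the zeros---equivalently, weighted-$L^2$ decay of $p_j\srvpm\phi$ in the $\xpa$-variable. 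The kernels \eqref{sep4} of Lemma~\ref{l3} do not supply this, since they concern the \emph{localized} resolvent $\omega_4(\Hopa-z)^{-1}\omega_4$, not the full resolvent acting on $\srvpm\phi$. The paper fills exactly this gap: from Lemma~\ref{l1} in adjoint form ($\srvpm=H_0^{-1}\omega H_0\srvpm$) it obtains $\langle\xpa\rangle^{s}\,\srvpm\phi\in L^2(\re^3)$ for every $s$, whence the Fourier traces make sense, their vanishing follows from ${\rm Im}\,T^\pm(E)\phi=0$ as you say, and \cite[Section~IX.9, Lemma~3]{ReSiII} then delivers the $L^2$ limit fiber by fiber. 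Without that decay input your argument does not close.
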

     \begin{proof}
     If $E \in (0,b)$ then $E \in \rho(H_0) : = \C\setminus \sigma(H_0)$ so that in the Neumann case the lemma follows from the Birman-Schwinger principle. In the Dirichlet case, \eqref{au99} and, hence, \eqref{au98} cannot hold true with $E \in (0,b)$.
    That is why we assume that $E>b$, and will follow the general lines of the proof of \cite[Section XIII.8, Lemma 8]{ReSiIV}.\\ Let $0 \neq \varphi = \varphi^\pm \in L^2(\re^3)$ satisfy
     \bel{au70}
     T^\pm(E)\varphi = \pm \varphi.
     \ee
     Set $\phi : = \srvpm \varphi$, and
     $$
     w_s(t) : = (1+ t^2)^{s/2}, \quad t \in \re, \quad s \in \re.
     $$
     As usual, we denote the multiplier by $w_s$ acting in $L^2(\re)$ by the same symbol $w_s$. Moreover, for $s \in \re$, set
     $$
     (W_{s} u)(\xpe,\xpa) : = w_s(\xpa) u(\xpe, \xpa), \quad (\xpe, \xpa) \in \re^3, \quad u \in \gD(W_{s}) \subset L^2(\re^3).
     $$
     Writing $\phi = H_0^{-1} \omega H_0 \phi$ with $\omega \in C_0^\infty(\re^3; [0,1])$ such that $\omega = 1$ in a neighborhood of $\Gamma$ (see Lemma \ref{l1}),  and commuting $W_{s}$ with $H_0^{-1}$ appropriately many times, we easily find that
     \bel{au100}
     W_{s} \phi \in L^2(\re^3), \quad s \in \re.
     \ee
     Let $\left\{\varphi_{k,q}\right\}_{k \in \Z_+}$ be an orthogonal basis of ${\rm Ran}\,p_q$, $q \in \Z_+$, for example the canonic basis defined in \eqref{au73} -- \eqref{au74} below. Set
     $$
     \phi_{k,q}(\xpa) : = \int_{\rd} \phi(\xpe,\xpa) \overline{\varphi_{k,q}(\xpe)}\, d\xpe, \quad \xpa \in \re, \quad k,q \in \Z_+.
     $$
     Evidently,
     $$
     \|\phi\|_{L^2(\re^3)}^2 = \sum_{(k,q) \in \Z_+^2} \, \|\phi_{k,q}\|^2_{L^2(\re)}.
     $$
     Moreover, by \eqref{au100}, we find that
     \bel{au101}
     w_s \phi_{k,q} \in L^2(\re), \quad s \in \re, \quad k,q \in \Z_+.
     \ee
     Further, for any $z \in \rho(H_0)$ we have
     \bel{au75}
     (H_0-z)^{-1} = \sum_{q \in \Z_+} p_q \otimes (\Hopa + \Lambda_q - z)^{-1}.
     \ee Let $q_0$ be the largest integer satisfying $q_0 < \frac{E-b}{2b}$. Since $E>b$, we have $q_0 \geq 0$.\\
     By \eqref{au70},  \eqref{8}, and \eqref{au75}, we get
     $$
     0 = \lim_{\varepsilon \downarrow 0} {\rm Im}\,\langle(H_0^{-1}-(E+i\varepsilon)^{-1})^{-1}\phi, \phi\rangle = -E^2 \lim_{\varepsilon \downarrow 0} {\rm Im}\,\langle(H_0-E-i\varepsilon)^{-1}\phi, \phi\rangle =
     $$
     $$
    - \frac{\pi E^2}{2} \sum_{q=0}^{q_0} \sum_{k \in \Z_+} (E-\Lambda_q)^{-1/2} \left(\left|\widehat{\phi_{k,q}}\left(-\sqrt{E-\Lambda_q}\right)\right|^2 + \left|\widehat{\phi_{k,q}}\left(\sqrt{E-\Lambda_q}\right)\right|^2\right),
    $$
    that is
    \bel{au102}
    \widehat{\phi_{k,q}}\left(\pm\sqrt{E-\Lambda_q}\right) = 0, \quad k \in \Z_+, \quad, q=0,\ldots,q_0,
    \ee
    where $\widehat{\phi_{k,q}}$ is the  Fourier transform of $\phi_{k,q}$. Then, by \cite[Section IX.9, Lemma 3,]{ReSiII}, relations  \eqref{au101} and \eqref{au102} imply the existence of a function $\beta_{k,q} \in L^2(\re)$ such that
    $$
    \beta_{k,q} = \lim_{\varepsilon \downarrow 0} \, (\Hopa + \Lambda_q - E - i\varepsilon)^{-1} \phi_{k,q}, \quad k \in \Z_+, \quad q=0,\ldots,q_0.
    $$
    Set
     $$
    \beta_{k,q} : =  (\Hopa + \Lambda_q - E)^{-1} \phi_{k,q}, \quad k \in \Z_+, \quad q > q_0,
    $$
    and
    $$
    \beta(\xpe,\xpa) : = \sum_{(k,q) \in \Z_+^2} \beta_{k,q}(\xpa)\,\varphi_{k,q}(\xpe), \quad (\xpe,\xpa) \in \re^3.
    $$
    Then $\beta \in L^2(\re^3)$, and
   \bel{au76}
    \beta = \lim_{\varepsilon \downarrow 0} \, (H_0  - E - i\varepsilon)^{-1}\,\phi.
    \ee
    Set $\psi : = - E^2 \beta - E \phi$. Then, by \eqref{8} and \eqref{au76}, we have
    \bel{au77}
    \psi = \lim_{\varepsilon \downarrow 0} \, (H_0^{-1}  - (E + i\varepsilon)^{-1})^{-1}\,\phi \in L^2(\re^3).
    \ee
    Moreover, \eqref{au70} implies
      \bel{au78}
      V_\pm \psi = \pm \phi.
      \ee
      By \eqref{au77} and \eqref{au78}, we easily find that
      $$
      H_0^{-1} \psi = \pm V_\pm \psi + E^{-1} \psi
      $$
      which is equivalent to $H_\pm \psi = E \psi$, $\psi \in \gD(H_\pm)$. Since $\psi \neq 0$, we arrive at \eqref{au99}.
     \end{proof}
     Finally, we prove \eqref{39a}, following the general ideas of \cite{Pu97}. By the invariance principle,
     \bel{au93}
     \xi(E; H_\pm) = - \xi(E^{-1}; H_\pm^{-1}, H_0^{-1}), \quad E \in (0,\infty),
     \ee
     where $\xi(E^{-1}; H_\pm^{-1}, H_0^{-1}) : = \xi(E^{-2}; H_\pm^{-2}, H_0^{-2})$, and $\xi(E^{-2}; H_\pm^{-2}, H_0^{-2})$ is defined by
     \eqref{37a}.\\
     Let $\left\{\lambda_j^\pm\right\}_{j \in \N}$ be the non-increasing sequence of the non-zero eigenvalues of $V_\pm$, and $\left\{f_j^\pm\right\}_{j \in \N}$ be the corresponding orthonormal eigenfunctions, so that
     $$
     V_\pm = \sum_{j \in \N} \lambda_j^\pm \langle \cdot, f_j^\pm\rangle f_j^\pm.
     $$
     For $\ell \in \N$ set
     $$
     V_{\pm,\ell} : = \sum_{j = 1}^\ell \lambda_j^\pm \langle \cdot, f_j^\pm\rangle f_j^\pm, \quad
     S_{\pm, \ell} : = H_0^{-1} \mp V_{\pm,\ell},
     $$
     $$
     T^\pm_\ell(E) : = {\rm n}-\lim_{\C_- \ni z \to E}  V_{\pm,\ell}^{1/2} \, (H_0^{-1}  - z^{-1})^{-1} \, V_{\pm_,\ell}^{1/2}, \quad E \in (0,\infty)\setminus b(2\Z_+ + 1).
     $$
     It is easy to check that ${\rm Re}\,T^\pm_\ell(E) \in \gS_2$, ${\rm Im}\,T^\pm_\ell(E) \in \gS_1$, and
     \bel{au92}
     \lim_{\ell \to \infty}\|{\rm Re}\,T^\pm_\ell(E) - {\rm Re}\,T^\pm(E)\|_2 = \lim_{\ell \to \infty}\|{\rm Im}\,T^\pm_\ell(E) - {\rm Im}\,T^\pm(E)\|_1 = 0.
     \ee
     Since the ranks of the operators $V_{\pm,\ell}$, $\ell \in \N$ are finite, and, hence $V_{\pm,\ell} \in \gS_1$, we find that \cite[Theorem 1.1]{Pu97} implies
     \bel{au91}
     \xi(E^{-1}; S_{\pm, \ell}, H_0^{-1}) = \mp \frac{1}{\pi} \int_\re n_\pm(1; {\rm Re}\,T^\pm_\ell(E) + t {\rm Im}\,T^\pm_\ell(E)) \frac{dt}{1+t^2}
     \ee
     for almost every $E \in (0,\infty)$. It remains to pass to the limit as $\ell \to \infty$ at both hand sides of \eqref{au91}.
    We have
      \bel{au96}
      \xi(E^{-1}; S_{\pm, \ell}, H_0^{-1}) = \xi(E^{-1}; S_{\pm, \ell}, H_\pm^{-1}) + \xi(E^{-1}; H_\pm^{-1}, H_0^{-1}).
      \ee
     Bearing in mind  \eqref{47}, we apply \cite[Lemma 4.2]{Pu97}, and obtain
      \bel{au97}
      \lim_{\ell \to \infty} \xi(E^{-1}; S_{\pm, \ell}, H_\pm^{-1}) = 0
      \ee
      for almost every $E \in (0,\infty)$.\\
     Next, combining \eqref{au91}, \eqref{au92}, and Lemma \ref{lau4}, we find that \cite[Lemma 2.5]{Pu97} implies
     \bel{au95}
     \lim_{\ell \to \infty} \int_\re n_\pm(1; {\rm Re}\,T^\pm_\ell(E) + t {\rm Im}\,T^\pm_\ell(E)) \frac{dt}{1+t^2} = \pm \tilde{\xi}(E; H_\pm, H_0),
     \ee
     for every $E \in (0,\infty)\setminus (\sigma_{\rm p}(H_\pm) \cup b(2\Z_++1))$.\\
     Putting together \eqref{au93}, \eqref{au91}, and \eqref{au96}-\eqref{au95}, we obtain \eqref{39a}.

 \section{Proof of Theorem \ref{th1}}
\label{s5}
Throughout the section the parameter $q \in \Z_+$ is fixed as in Theorem \ref{th1}.
    \subsection{The effective Hamiltonians}
    \label{ss1}

Define the rank-one operator
$$
\gp : = \langle \cdot, \omega_4\rangle \omega_4,
$$
self-adjoint in $L^2(\re)$.
For  $\lambda \in (-b,0) \cup (0,b)$, set
$$
M^\pm_{3,q}(\lambda) : = - \frac{\iota(\lambda)}{2\sqrt{|\lambda|}} \cM^\pm_q
$$
where
$$
\iota(\lambda) : = \left\{
\begin{array} {l}
1 \quad {\rm if} \quad \lambda <0, \\
-i \quad {\rm if} \quad \lambda >0,
\end{array}
\right.
$$
$$
\cM^\pm_q : = \srvpm \cT_q p_q \otimes \gp \cT_q^* \srvpm,
$$
and
    \bel{m21}
\cT_q : = L_q(\Lambda_q),
    \ee
    the operator $L_q(z)$ being defined in \eqref{m20}. Note that the operators $\cM^\pm_q$ are self-adjoint and non-negative so that the operators $M^\pm_{3,q}(\lambda)$ are self-adjoint and non-positive if
    $\lambda < 0$ and purely imaginary if $\lambda > 0$.

\beprl{p2} Let $q \in \Z_+$, $\epsilon \in (0,1)$. Then we have
    \bel{f6}
\xi(\Lambda_q-\lambda; H_+, H_0) = O(1),
    \ee
    \begin{align} \label{f7}
-n_+((1-\epsilon)2\sqrt{\lambda};\cM^-_q) + O(1) &\leq \xi(\Lambda_q-\lambda; H_-, H_0)\\[2mm] \nonumber
& \leq -n_+((1+\epsilon)2\sqrt{\lambda};\cM^-_q) + O(1),
    \end{align}
    \begin{align} \label{f8}
\frac{1}{\pi} {\rm Tr}\,\arctan{\left(\frac{\cM^+_q}
{(1+\epsilon)2\sqrt{\lambda}}\right)}  + O(1) &\leq \xi(\Lambda_q+\lambda; H_+, H_0)\\[3mm] \nonumber
 & \leq \frac{1}{\pi} {\rm Tr}\,\arctan{\left(\frac{\cM^+_q}{(1-\epsilon)2\sqrt{\lambda}}\right)}  + O(1),
    \end{align}

    \begin{align} \label{f9}
-\frac{1}{\pi} {\rm Tr}\, \arctan{\left(\frac{\cM^-_q}{(1-\epsilon)2\sqrt{\lambda}}\right)}  + O(1) &\leq  \xi(\Lambda_q+\lambda; H_-, H_0)\\[3mm] \nonumber
 &\leq -\frac{1}{\pi} {\rm Tr}\,\arctan{\left(\frac{\cM^-_q}{(1+\epsilon)2\sqrt{\lambda}}\right)}  + O(1),
    \end{align}
 as $\lambda \downarrow 0$.
 \epr
 {\em Remark}: According to Proposition \ref{p2}, the operators $\cM_q^\pm$ play the role of {\em effective Hamiltonians} in the asymptotic analysis of the
SSF $\xi(E;H\pm, H_0)$ as the energy $E$ approaches the Landau level $\Lambda_q$, $q \in \Z_+$. \\

For the proof of Proposition \ref{p2} we need the well known {\em Weyl inequalities} for the eigenvalues of compact operators, described in the following
\begin{lemma} \label{lau1} {\rm \cite[Theorem 9, Section 9.2]{BiSo87}}
Let $X$ be a separable Hilbert space, and $T_j = T_j^* \in \gS_\infty(X)$, $j=1,2$. Then for any $s_j>0$ we have
$$
n_\pm(s_1+s_2; T_1 + T_2) \leq n_\pm(s_1, T_1) + n_\pm(s_2, T_2).
$$
\end{lemma}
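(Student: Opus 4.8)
The plan is to prove the inequality for $n_+$ directly, by a spectral-projection and codimension argument, and then deduce the $n_-$ case by reflection. Recall first that for a compact self-adjoint $T$ and $s>0$ the spectral projection $\one_{(s,\infty)}(T)$ has finite rank equal to $n_+(s;T)$ and projects onto the (finite) orthogonal sum of eigenspaces of $T$ belonging to eigenvalues strictly greater than $s$; consequently $\langle Tu,u\rangle>s$ for every unit vector $u$ in its range, whereas $\langle Tu,u\rangle\le s$ for every unit vector $u$ orthogonal to that range, since such $u$ lies in ${\rm Ran}\,\one_{(-\infty,s]}(T)$.

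Now fix $s_1,s_2>0$ and set $Q_j:=\one_{(s_j,\infty)}(T_j)$, $m_j:={\rm rank}\,Q_j=n_+(s_j;T_j)$ for $j=1,2$, and $N:={\rm Ran}\,\one_{(s_1+s_2,\infty)}(T_1+T_2)$. Suppose, for contradiction, that $\dim N>m_1+m_2$. Since $\ker Q_j=({\rm Ran}\,Q_j)^\perp$ has codimension $m_j$, the subspace $\ker Q_1\cap\ker Q_2$ has codimension at most $m_1+m_2<\dim N$, so there exists a unit vector $u\in N\cap\ker Q_1\cap\ker Q_2$. Applying the observation of the first paragraph to $T_1$ and to $T_2$ gives $\langle T_1u,u\rangle\le s_1$ and $\langle T_2u,u\rangle\le s_2$, hence $\langle(T_1+T_2)u,u\rangle\le s_1+s_2$; but $u\in N$ forces $\langle(T_1+T_2)u,u\rangle>s_1+s_2$. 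This contradiction shows $\dim N\le m_1+m_2$, i.e. $n_+(s_1+s_2;T_1+T_2)\le n_+(s_1;T_1)+n_+(s_2;T_2)$. Finally, since $n_-(s;T)=n_+(s;-T)$ and $-(T_1+T_2)=(-T_1)+(-T_2)$ with $-T_j\in\gS_\infty(X)$, applying the inequality just proved to the pair $-T_1,-T_2$ yields the corresponding estimate for $n_-$.

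No genuine obstacle arises; the argument is elementary. The only point to watch is the consistent bookkeeping of the strict cut-off $(s_1+s_2,\infty)$ used for $T_1+T_2$ against the closed tails $(-\infty,s_j]$ obtained from orthogonality to ${\rm Ran}\,Q_j$, which is exactly what produces the strict-versus-nonstrict clash $s_1+s_2<\langle(T_1+T_2)u,u\rangle\le s_1+s_2$. An equivalent route is via the Courant--Fischer characterisation of the ordered positive eigenvalues $\lambda_k^+(\cdot)$ of a compact self-adjoint operator: intersecting near-optimal $(j-1)$- and $(k-1)$-dimensional test subspaces gives $\lambda_{j+k-1}^+(T_1+T_2)\le\lambda_j^+(T_1)+\lambda_k^+(T_2)$, and taking $j=m_1+1$, $k=m_2+1$ together with $\lambda_{m_j+1}^+(T_j)\le s_j$ recovers the claim, but the projection/dimension argument above is the shortest.
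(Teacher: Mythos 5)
Your proof is correct: the projection/codimension argument (equivalently, the Courant--Fischer route you sketch) is exactly the classical proof of the Weyl inequalities. The paper itself gives no proof of this lemma, simply citing \cite[Theorem 9, Section 9.2]{BiSo87}, and your argument is essentially the one found in that reference, so there is nothing to add.
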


 \begin{proof}[Proof of Proposition \ref{p2}]
 Set
$$
R^\pm_{3,q}(\lambda) : = T^\pm(\Lambda_q + \lambda) - M^\pm_{3,q}(\lambda).
$$
By Propositions \ref{p11}, \ref{p13}, and \ref{p14}, estimates \eqref{sep2}-\eqref{sep3}, and the explicit form \eqref{sep4} of the integral kernel of the operator $\cR(\lambda)$, we have
 %${\rm Re}\;R^\pm_{3,q}(\lambda) \in \gS_2$, ${\rm Im}\;R^\pm_{3,q}(\lambda) \in \gS_1$, and
    \bel{au11}
\|{\rm Re}\;R^\pm_{3,q}(\lambda)\|_2 = O(1), \quad \|{\rm Im}\;R^\pm_{3,q}(\lambda)\|_1 = O(1), \quad \lambda \downarrow 0.
    \ee
Applying Lemma \ref{lau1}, we get
$$
\frac{1}{\pi}\int_\re n_\pm(1+\epsilon; {\rm Re}\;M^\pm_{3,q}(\lambda) + t {\rm Im}\;M^\pm_{3,q}(\lambda)) \frac{dt}{1+t^2}
\, -  $$
$$
\frac{1}{\pi} \int_\re n_* (\epsilon; {\rm Re}\;R^\pm_{3,q}(\lambda) + t {\rm Im}\;R^\pm_{3,q}(\lambda)) \frac{dt}{1+t^2} \leq
$$
$$
\frac{1}{\pi}\int_\re n_\pm(1; {\rm Re}\;T^\pm(\Lambda_q +\lambda) + t {\rm Im}\;T^\pm(\Lambda_q +\lambda)) \frac{dt}{1+t^2}  \leq
$$
$$
\frac{1}{\pi}\int_\re n_\pm(1-\epsilon; {\rm Re}\;M^\pm_{3,q}(\lambda) + t {\rm Im}\;M^\pm_{3,q}(\lambda)) \frac{dt}{1+t^2} \,
+ $$
    \bel{au12}
\frac{1}{\pi} \int_\re n_*(\epsilon; {\rm Re}\;R^\pm_{3,q}(\lambda) + t {\rm Im}\;R^\pm_{3,q}(\lambda)) \frac{dt}{1+t^2}.
    \ee
    Using Lemmas \ref{lau2}, and \ref{lau3} with $s>0$ and $p=2$, we obtain
    \bel{au13}
    \frac{1}{\pi}\int_\re n_*(s; {\rm Re}\;R^\pm_{3,q}(\lambda) + t {\rm Im}\;R^\pm_{3,q}(\lambda)) \frac{dt}{1+t^2} \leq \frac{4}{s^2} \|{\rm Re}\;R^\pm_{3,q}(\lambda)\|^2_2 + \frac{2}{\pi s} \|{\rm Im}\;R^\pm_{3,q}(\lambda)\|_1.
    \ee
    Putting together \eqref{39}, \eqref{au12}, \eqref{au13}, and \eqref{au11}, we get
     $$
\frac{1}{\pi} \int_\re n_\pm (1+\epsilon; {\rm Re}\;M^\pm_{3,q}(\lambda) + t {\rm Im}\;M^\pm_{3,q}(\lambda)) \frac{dt}{1+t^2} + O(1) \leq
$$
$$
\pm \xi(\Lambda_q + \lambda; H_\pm, H_0) \leq
$$
    \bel{au15}
    \frac{1}{\pi}\int_\re n_\pm(1-\epsilon; {\rm Re}\;M^\pm_{3,q}(\lambda) + t {\rm Im}\;M^\pm_{3,q}(\lambda)) \frac{dt}{1+t^2} \,
+ O(1)
\ee
as $\lambda \to 0$. Simple calculations show that for $s>0$ we have
    \bel{au16}
    \frac{1}{\pi}\int_\re n_+(s; {\rm Re}\;M^\pm_{3,q}(\lambda) + t {\rm Im}\;M^\pm_{3,q}(\lambda)) \frac{dt}{1+t^2} = 0,
    \ee
    \bel{au17}
    \frac{1}{\pi}\int_\re n_-(s; {\rm Re}\;M^\pm_{3,q}(\lambda) + t {\rm Im}\;M^\pm_{3,q}(\lambda)) \frac{dt}{1+t^2} = n_+(2s\sqrt{|\lambda|}; \cM^\pm_q),
    \ee if
    $\lambda < 0$,
    and
    \bel{au18}
    \frac{1}{\pi}\int_\re n_\pm(s; {\rm Re}\;M^\pm_{3,q}(\lambda) + t {\rm Im}\;M^\pm_{3,q}(\lambda)) \frac{dt}{1+t^2} =
    \frac{1}{\pi} {\rm Tr}\,\arctan{\left(\frac{\cM^+_q}{2s\sqrt{\lambda}}\right)},
    \ee
    if
    $\lambda > 0$. Now the claims of the proposition follow from estimates \eqref{au15} and identities \eqref{au16} - \eqref{au18}.
 \end{proof}
 Note that \eqref{f6} is identical with \eqref{ms1}, so that in order to complete the proof of Theorem \ref{th1}, it  remains to prove \eqref{m13} - \eqref{ms2} using respectively \eqref{f7} and \eqref{f8} -- \eqref{f9}.
Here we state two more lemmas needed for the estimates of $n_+(s; \cM_q^\pm)$.

    \begin{lemma} \label{lm1}
    Let $X_j$, $j=1,2$, be Hilbert spaces, and $J: X_1 \to X_2$ be a linear compact operator. Then we have
    $$
    n_+(s; J^* J) = n_+(s; J J^*), \quad s>0.
    $$
    \end{lemma}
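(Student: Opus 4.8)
The statement is the classical fact that, for a compact operator $J$, the non-zero singular values of $J^*J$ and $JJ^*$ coincide with multiplicities, which immediately gives the equality of the eigenvalue counting functions $n_+(s;\cdot)$ for every $s>0$. I would prove it directly via the polar decomposition, or — more elementarily — by exhibiting an explicit bijection between the relevant spectral subspaces. The plan is as follows.

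First I would recall that $J^*J$ is a non-negative compact self-adjoint operator in $X_1$, so it has a discrete spectrum of non-negative eigenvalues accumulating only at $0$; fix $s>0$ and let $E_s := \mathbf{1}_{(s,\infty)}(J^*J)$, with $M_s := \operatorname{Ran} E_s \subset X_1$ finite-dimensional, $\dim M_s = n_+(s;J^*J)$. The key step is to check that the restriction of $J$ to $M_s$ is injective with image contained in (indeed spanning) the corresponding spectral subspace $\widetilde M_s := \operatorname{Ran}\mathbf{1}_{(s,\infty)}(JJ^*)$ of $JJ^*$ in $X_2$. Injectivity is immediate: if $u\in M_s$ and $Ju=0$ then $J^*Ju=0$, so $u\in\operatorname{Ker}(J^*J)$, forcing $u\perp M_s$ and hence $u=0$. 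For the mapping property I would use the intertwining identity $(JJ^*)J = J(J^*J)$: more precisely, for any polynomial $p$ one has $p(JJ^*)J = J\,p(J^*J)$, and by a standard approximation (Weierstrass/functional calculus on the compact operator $J^*J$, whose spectrum is bounded) this extends to $g(JJ^*)J = J\,g(J^*J)$ for any continuous $g$ vanishing at $0$, and then — since the spectral projection $\mathbf{1}_{(s,\infty)}$ of a compact operator onto an interval bounded away from $0$ agrees with such a $g$ off the kernel — to $\mathbf{1}_{(s,\infty)}(JJ^*)\,J = J\,\mathbf{1}_{(s,\infty)}(J^*J)$. Consequently $J$ maps $M_s$ into $\widetilde M_s$, so $n_+(s;J^*J)=\dim M_s \le \dim\widetilde M_s = n_+(s;JJ^*)$. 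Exchanging the roles of $J$ and $J^*$ (note $(J^*)^*J^* = JJ^*$ and $J^*(J^*)^* = J^*J$) gives the reverse inequality, and equality follows.

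The only point requiring a little care — and the one I would regard as the main (very minor) obstacle — is justifying the operator identity $\mathbf{1}_{(s,\infty)}(JJ^*)\,J = J\,\mathbf{1}_{(s,\infty)}(J^*J)$ cleanly, since a sharp cutoff is not a continuous function on all of $\mathbb R$. The clean way is: pick a continuous $g$ with $g\equiv 1$ on $[s,\infty)$, $g\equiv 0$ near $0$, and $0\le g\le 1$; by the bounded convergence theorem applied to spectral measures, $g(J^*J)$ acts as the identity on $M_s$ and the identity $g(JJ^*)J = J\,g(J^*J)$ holds (it holds for polynomials by direct computation and passes to $g$ because the spectra are compact and $g$ is a uniform limit of polynomials on them, modulo handling the behavior at $0$, which is irrelevant since we may also arrange $g$ to vanish identically on a neighbourhood of $0$ containing no spectrum of interest). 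Then for $u\in M_s$ we get $g(JJ^*)(Ju) = J(g(J^*J)u) = Ju$, so $Ju$ lies in the range of $g(JJ^*)$; since $g$ is supported in $[\,\delta,\infty)$ for some $\delta>0$, this range is contained in $\operatorname{Ran}\mathbf 1_{[\delta,\infty)}(JJ^*)$, hence in a finite-dimensional spectral subspace of $JJ^*$ on which $Ju$ has $JJ^*$-spectral support in $[\delta,\infty)$. A final easy observation — $\|Ju_k\|^2 = \langle J^*Ju_k,u_k\rangle > s$ for a suitable orthonormal system in $M_s$ — upgrades this to membership in $\widetilde M_s = \operatorname{Ran}\mathbf 1_{(s,\infty)}(JJ^*)$, completing the dimension count. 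I expect the whole argument to occupy only a few lines in the paper, since it is a textbook fact; indeed one may alternatively simply cite the standard theory of singular numbers (e.g. Birman–Solomjak), noting $s_k(J)^2$ are the non-zero eigenvalues of both $J^*J$ and $JJ^*$.
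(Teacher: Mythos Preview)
Your proposal is correct. The paper's own proof is a one-line citation to \cite[Chapter 8, Section 1, Theorem 4]{BiSo87}, which is precisely the alternative you mention in your final sentence: the non-zero eigenvalues of $J^*J$ and $JJ^*$ coincide with multiplicities since both are the squares of the singular values $s_k(J)$. Your main argument --- an explicit bijection between the spectral subspaces via the intertwining $(JJ^*)J = J(J^*J)$ --- is a valid self-contained proof, but the functional-calculus justification of $\mathbf{1}_{(s,\infty)}(JJ^*)J = J\,\mathbf{1}_{(s,\infty)}(J^*J)$ is heavier than needed: since $M_s$ is finite-dimensional and spanned by eigenvectors $u_k$ of $J^*J$ with eigenvalues $\lambda_k > s$, the identity $JJ^*(Ju_k) = J(J^*Ju_k) = \lambda_k Ju_k$ shows directly that each $Ju_k$ is an eigenvector of $JJ^*$ with the same eigenvalue $\lambda_k > s$, hence lies in $\widetilde M_s$; this bypasses the approximation argument entirely.
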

    \begin{proof}
    The claim follows immediately form \cite[Chapter 8, Section 1, Theorem 4]{BiSo87}.
    \end{proof}
    \begin{lemma} \label{lm2}
    Let $X_j$, $j=1,2$, be Hilbert spaces, $J: X_1 \to X_2$ be a linear compact operator, and $T \in \gS_\infty(X_2)$. Then we have
    \bel{m1}
   n_+(s; J^*(I-T) J) \geq n_+(s; (1-\veps)J^*J) - {\rm Tr}\,\one_{[\veps,\infty)}(T), \quad s>0, \quad \veps \in (0,1).
    \ee
    \end{lemma}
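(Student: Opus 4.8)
Lemma \ref{lm2} is a standard consequence of the Weyl inequality and the min-max principle, so the plan is short.

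First I would write, for arbitrary $\veps \in (0,1)$,
$$
J^*(I-T)J = J^*J - J^*TJ = (1-\veps) J^*J + \bigl(\veps J^*J - J^*TJ\bigr).
$$
By the Weyl inequality (Lemma \ref{lau1}) applied to the two self-adjoint compact summands on the right, with the splitting $s = s + 0$ replaced by a splitting into $s$ and an arbitrarily small positive parameter, one gets
$$
n_+(s;(1-\veps)J^*J) \leq n_+(s;J^*(I-T)J) + n_-\bigl(0^+;\veps J^*J - J^*TJ\bigr),
$$
where $n_-(0^+;\,\cdot\,)$ denotes the number of strictly negative eigenvalues. Hence it suffices to show that $\veps J^*J - J^*TJ$ has at most ${\rm Tr}\,\one_{[\veps,\infty)}(T)$ negative eigenvalues, i.e.
$$
n_-\bigl(0^+;\veps J^*J - J^*TJ\bigr) \leq {\rm Tr}\,\one_{[\veps,\infty)}(T).
$$

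The number of negative eigenvalues of the self-adjoint operator $J^*(T-\veps I)J$ equals, by the Birman--Schwinger-type counting argument (or directly by min-max), the dimension of a maximal subspace on which $\langle J^*(T-\veps I)J\, u,u\rangle = \langle (T-\veps I) Ju, Ju\rangle < 0$. Writing $Q := \one_{[\veps,\infty)}(T)$ for the spectral projection of $T$ onto $[\veps,\infty)$, we have $T - \veps I \le 0$ on $\mathrm{Ran}\,(I-Q)$, so the quadratic form $v \mapsto \langle (T-\veps I)v, v\rangle$ is non-positive on $\mathrm{Ran}\,(I-Q)$. Therefore any subspace on which it is strictly negative must intersect $\mathrm{Ran}\,(I-Q)$ trivially, except through the kernel, and a dimension count gives
$$
n_-\bigl(0^+; J^*(T-\veps I)J\bigr) \le \dim \mathrm{Ran}\,Q = {\rm Tr}\,\one_{[\veps,\infty)}(T).
$$
Combining the two displays yields \eqref{m1}. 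The only mildly delicate point is handling the boundary term in the Weyl inequality (equality in $s = s_1 + s_2$ forces $s_2 \downarrow 0$), which is resolved by a routine limiting argument using the discreteness of the spectrum of the compact operators away from $0$; I expect no real obstacle here.
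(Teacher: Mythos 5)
Your overall strategy -- peel off $(1-\veps)J^*J$ and control the remainder through the finite rank of $\one_{[\veps,\infty)}(T)$ -- is sound and, modulo packaging, is the same idea as the paper's proof (the paper decomposes $I-T = (1-\veps)I + \one_{(-\infty,\veps)}(T)(\veps I-T) + \one_{[\veps,\infty)}(T)(\veps I-T)$ inside the sandwich, observes that the middle term contributes a non-negative operator and the last term an operator of rank at most ${\rm Tr}\,\one_{[\veps,\infty)}(T)$, and concludes by the mini-max principle and the standard finite-rank perturbation bound from Birman--Solomjak). However, your key counting step is wrong as stated. After the Weyl step, what you must bound is $n_-\bigl(0^+;\veps J^*J - J^*TJ\bigr)$, i.e.\ the number of \emph{strictly positive} eigenvalues of $J^*(T-\veps I)J$; instead you assert and "prove" the bound $n_-\bigl(0^+;J^*(T-\veps I)J\bigr)\le {\rm Tr}\,\one_{[\veps,\infty)}(T)$, which is false in general: take $T=0$ and $J$ of infinite rank, then $J^*(T-\veps I)J=-\veps J^*J$ has infinitely many negative eigenvalues while ${\rm Tr}\,\one_{[\veps,\infty)}(T)=0$. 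The supporting reasoning is also inverted: non-positivity of the form $v\mapsto\langle(T-\veps I)v,v\rangle$ on ${\rm Ran}\,(I-Q)$ does not prevent strict negativity on subspaces of ${\rm Ran}\,(I-Q)$ -- it is strict \emph{positivity} that forces a non-trivial $Q$-component.

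The fix is short, and then your route works: if $V\subset X_1$ is a subspace on which $\langle(T-\veps I)Ju,Ju\rangle>0$ for all $0\neq u\in V$, then for such $u$ one must have $QJu\neq 0$ (otherwise $Ju\in{\rm Ran}\,(I-Q)$ and the form is $\le 0$), so $u\mapsto QJu$ is injective on $V$ and $\dim V\le{\rm rank}\,Q={\rm Tr}\,\one_{[\veps,\infty)}(T)$. By the mini-max principle this gives $n_+\bigl(0^+;J^*(T-\veps I)J\bigr)=n_-\bigl(0^+;\veps J^*J-J^*TJ\bigr)\le{\rm Tr}\,\one_{[\veps,\infty)}(T)$, which, combined with your Weyl-plus-limit argument (the limit $s_2\downarrow 0$ is indeed harmless since the counting functions are integer-valued and monotone), yields \eqref{m1}. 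With this correction your proof is a legitimate alternative to the paper's: you trade the paper's explicit spectral decomposition and finite-rank perturbation lemma for the Weyl inequality plus a direct dimension count, at the cost of the small limiting argument in $s_2$.
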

    \begin{proof}
    We have
    \bel{m1a}
    J^*(I-T) J = J^*\,((1-\veps)I + \one_{(-\infty, \veps)}(T) (\veps I - T) +  \one_{[\veps, \infty)}(T) (\veps I - T))J.
    \ee
    Evidently,
    \bel{m1b}
    \one_{(-\infty, \veps)}(T) (\veps I - T)  \geq 0, \quad {\rm rank} \, J^* \one_{[\veps, \infty)}(T) (\veps I - T)J \leq {\rm Tr}\,\one_{[\veps,\infty)}(T).
    \ee
   By the mini-max principle and \cite[Chapter 9, Section 3, Theorem 3]{BiSo87}, now \eqref{m1} follows from \eqref{m1a} and \eqref{m1b}.
    \end{proof}

    For further references, set
    \bel{m3}
    M^\pm_{4,q} : = (p_q \otimes \gp) \cT_q^* V_\pm \cT_q (p_q \otimes \gp).
    \ee
    The operator $M^\pm_{4,q}$ will be considered as a compact self-adjoint operator in the Hilbert space $(p_q \otimes \gp) L^2(\re^3)$.
    By Lemma \ref{lm1}, we have
    \bel{m4}
    n_+(s; \cM^\pm_q) = n_+(s;M^\pm_{4,q}), \quad s>0.
    \ee

 \subsection{Lower bounds of $n_+(s; M_{4,q}^+)$ in the Dirichlet case}
       \label{ss3}

In this and in the following subsection we assume $\omega = 1$ in a neighborhood of $\overline{\omin}$, where $\omega$ is the function which participates in the definition of the operator $L_q(z)$ (see \eqref{m20}), and hence in that of $\cT_q$ (see \eqref{m21}), and of $M^\pm_{4,q}$ (see \eqref{m3}).\\ Let $\Omega \subset \re^3$ be a bounded domain. Note that for any $\xpe \in \re$ the function
$$
\re \ni \xpa \mapsto \one_{\Omega}(\xpe, \xpa) \in \{0,1\}
$$
is Lebesgue measurable and has a bounded support. Set
    \bel{fin60}
    w_{\Omega}(\xpe) : = \int_\re \one_{\Omega}(\xpe, \xpa) \, d\xpa, \quad \xpe \in \rd.
    \ee
    Evidently, $w_{\Omega}(\xpe) \geq 0$ for every $\xpe \in \rd$, and $w_{\Omega}(\xpe)>0$ if and only if $\xpe \in \pipe(\Omega)$.

\beprl{Sy1}
Let the domain $\oless \subset \re^3$ satisfy $\overline{\oless} \subset \omin$. Then we have
    \bel{m22}
n_+(s; M^+_{4,q}) \geq n_+\left(4s\|\omega_4\|_{L^2(\re)}^2;  p_q  \,w_{\Omega_<}\,  p_q\right)  + O(1), \quad s > 0.
    \ee
\epr
\bepf
By definition of $\cT_q$ (see \eqref{m21}), we have
$$
\cT_q = L_q(\Lambda_q)= (\Lambda_q \omega +K) P_q - \omega_3 P_q G_3= (\Lambda_q \omega - \omega H_0) P_q + H_0  \omega P_q - \omega_3 P_q G_3.
$$
Using that $\partial_{x_3} \omega_4 = 0$ on the support of $\omega$ and hence of $\omega_3$, we obtain
$$
\omega_3  G_3 (p_q  \otimes  \gp)=0, \quad \omega H_0 P_q  (p_q  \otimes  \gp)= \Lambda_q \omega P_q  (p_q  \otimes  \gp), \quad \cT_q (p_q  \otimes  \gp) = H_0 \omega P_q  (p_q  \otimes  \gp),
$$
 and, hence,
    \bel{m29}
 %  $$
 M^+_{4,q} = (p_q  \otimes  \gp)  P_q \omega  H_0  V_+ H_0 \omega P_q (p_q  \otimes \gp).
 %$$
    \ee
On the other hand,
$$
V_+  = H_0^{-1} - H^{-1}_{+, {\rm in}}  \oplus  H^{-1}_{+, {\rm ex}}= V_{+,0} - R_{\rm in},
$$
with
$$
V_{+,0} : = H_0^{-1} - 0 \oplus H^{-1}_{+, {\rm ex}}, \qquad R_{\rm in} := H^{-1}_{+, {\rm in}} \oplus 0.
$$
Obviously,
$$
-R^{\frac12}_{\rm in} = \Lambda_q R^{\frac12}_{\rm in}  \Big( 0 \oplus H^{-1}_{+, {\rm ex}} - \Lambda_q^{-1} \Big) = \Big( 0 \oplus H^{-1}_{+, {\rm ex}} - \Lambda_q^{-1} \Big) \Lambda_q R^{\frac12}_{\rm in},
$$
and since $0 \oplus H^{-1}_{+, {\rm ex}} = H_0^{-1} -V_{+,0}$, we have
$$
R_{\rm in} = \Lambda_q^2 \Big( H_0^{-1} - \Lambda_q^{-1} - V_{+,0}  \Big)  \, R_{\rm in} \, \Big( H_0^{-1} - \Lambda_q^{-1} - V_{+,0} \Big).
$$
Moreover, from the above relations and the fact that $\omega$ is equal to $1$ on $\overline{\omin}$, we obtain
$$
R_{\rm in} (H_0^{-1} - \Lambda_q^{-1}) H_0 \omega P_q (p_q  \otimes  \gp)=  R_{\rm in} (\omega -  \Lambda_q^{-1} \omega H_0) P_q (p_q  \otimes \gp)+  R_{\rm in} [\omega,H_0] P_q (p_q  \otimes  \gp) =0.
$$
Using this relation and the dual one, we deduce
$$
M^+_{4,q} = (p_q  \otimes \gp)  P_q \omega H_0  (V_{+,0} - R_{\rm in})  H_0 \omega P_q (p_q  \otimes  \gp)
$$
$$
= (p_q \otimes \gp) P_q \omega  H_0  ( V_{+,0} - \Lambda_q^2 V_{+,0}  R_{\rm in} V_{+,0})  H_0 \omega P_q (p_q \otimes \gp) .
$$
Since the operator $V^{\frac12}_{+,0} R_{\rm in} V_{+,0}^{\frac12} $ is compact, Lemma \ref{lm2} implies
   $$
n_+(s; M^+_{4,q}) \geq
 $$
     \bel{m23}
 n_+\left(2s; (p_q \otimes \gp) P_q \omega  H_0   V_{+,0} H_0 \omega P_q (p_q \otimes \gp)\right) - {\rm Tr}\;\one_{[1/2,\infty)}(V^{\frac12}_{+,0} R_{\rm in} V_{+,0}^{\frac12}), \quad s>0.
    \ee
Further,
    \bel{m24}
V_{+,0} =
%H_0^{-1} - 0 \oplus H^{-1}_{+, {\rm ex}}=
 \Big( H_0^{-1} - ( H_0+\oneoless)^{-1} \Big) + \Big(  ( H_0+\oneoless)^{-1} - 0 \oplus H^{-1}_{+, {\rm ex}} \Big).
    \ee
By $\oless \cap \omex= \emptyset $, the restriction to $\gD(H^{\frac12}_{+, {\rm ex}})$ of the quadratic form of $H_0+\oneoless$ coincides with the one of $H_{+, {\rm ex}}$. Hence, by \cite[Proposition 2.1 (i)]{PuRo07}, the second term on the r.h.s. of \eqref{m24} is  non-negative. Next, the resolvent identity yields
$$
 H_0^{-1} - ( H_0+ \oneoless)^{-1} =  H_0^{-1} \oneoless  \Big( I - \oneoless ( H_0+\oneoless)^{-1}\oneoless  \Big) \oneoless H_0^{-1}.
$$
Thus, the mini-max principle implies
    \bel{m25}
    n_+\left(s; (p_q \otimes \gp) P_q \omega  H_0   V_{+,0} H_0 \omega P_q (p_q \otimes \gp)\right) \geq
    \ee
    $$
    n_+\left(s; (p_q \otimes \gp) P_q \omega  \oneoless  \Big( I - \oneoless ( H_0+\oneoless)^{-1}\oneoless  \Big) \oneoless  \omega P_q (p_q \otimes \gp)\right), \quad s>0.
    $$
    Applying Lemma \ref{lm2}, and taking into account that
    \bel{m27a}
    (p_q \otimes \gp) P_q \omega   \oneoless  \omega P_q (p_q \otimes \gp) = (p_q \otimes \gp) \oneoless  (p_q \otimes \gp),
    \ee
    we obtain
    \bel{m26}
    n_+\left(s; (p_q \otimes \gp) P_q \omega  \oneoless  \Big( I - \oneoless ( H_0+\oneoless)^{-1}\oneoless  \Big) \oneoless    \omega P_q (p_q \otimes \gp)\right) \geq
    \ee
    $$
    n_+\left(s; (p_q \otimes \gp)   \oneoless   (p_q \otimes \gp)\right)
    - {\rm Tr}\,\one_{[1/2, \infty)} \left(\oneoless  (H_0+\oneoless)^{-1} \oneoless\right),
     \quad s>0.
    $$
    Finally, the operator $(p_q \otimes \gp)   \oneoless   (p_q \otimes \gp)$ with domain $(p_q \otimes \gp) L^2(\re^3)$ is unitarily equivalent to the operator
    $\|\omega_4\|^{-2}_{L^2(\re)}p_q \, w_{\Omega_<} \, p_q$ with domain $p_q L^2(\rd)$, where $w_{\Omega_<}$ is the function defined in \eqref{fin60}. Therefore,
    \bel{m27}
    n_+\left(s; (p_q \otimes \gp)   \oneoless   (p_q \otimes \gp)\right) = n_+\left(\|\omega_4\|^{2}_{L^2(\re)}s;  p_q w_{\Omega_<} p_q)\right), \quad s>0.
    \ee
    Now \eqref{m22} follows from \eqref{m23}, \eqref{m25}, \eqref{m26}, and \eqref{m27}.
    \epf

\subsection{Lower bounds of $n_+(s; M^-_{4,q})$ in the Neumann case}
\label{ss4}

%In the Neumann case, let us prove the following lower bound on $M^-_{4,q}$ defined in the proof of Proposition \ref{3a}.

\beprl{Sy2}Let the domain $\oless \subset \re^3$ satisfy $\overline{\oless} \subset \omin$. Then there exists a constant $c>0$ such that
    \bel{m30}
n_+(s; M^-_{4,q}) \geq n_+\left(cs\|\omega_4\|_{L^2(\re)}^2;  p_q  \,w_{\Omega_<}\,  p_q\right)  + O(1), \quad s > 0,
\ee
 \epr

\bepf
%By definition of $\cT_q$,
%$$
%\cT_q= L_q(\Lambda_q)= (\Lambda_q w +K) P_q - w_3 P_q G_3= (\Lambda_q w - w H_0) P_q + H_0w  P_q - w_3 P_q G_3.
%$$
%Then using that $\partial_{x_3} w_4 =0$ on the support of $w_3$ and  of $w$, we obtain that  $w_3  G_3 (p_q \otimes \gp)=0$ and  $w H_0 P_q  (p_q \otimes \gp)= \Lambda_q w P_q  (p_q \otimes \gp)$ and then %$$
By analogy with \eqref{m29}, we obtain
    %\bel{m31}
    $$
M_{4,q}^- = (p_q \otimes \gp)  P_q \omega  H_0  V_- H_0 \omega P_q (p_q \otimes \gp).
    $$
    %\ee
On the other hand, for any $\delta >0$, we have
$$V_-: = H^{-1}_{-, {\rm in}} \oplus H^{-1}_{-, {\rm ex}} - H_0^{-1}  = V_{-,0} - R_{\rm in},$$
with
\bel{defv0}
V_{-,0} := (1 + \delta)  H^{-1}_{-, {\rm in}} \oplus H^{-1}_{-, {\rm ex}} - H_0^{-1} , \qquad R_{\rm in} := \delta H^{-1}_{-, {\rm in}} \oplus 0.
\ee
For $\delta >0$ such that $\Lambda_q (1+\delta)$ is not an eigenvalue of  $H_{-, {\rm in}}$, set
$$
r_{\rm in}:=  \delta^{\frac12} H_{-, {\rm in}}^{-\frac12} \Big( I- \Lambda_q (1+\delta) H^{-1}_{-, {\rm in}} \Big)^{-1} \oplus 0.
$$
Obviously,
%we have
$$
-R^{\frac12}_{\rm in} = \Lambda_q r_{\rm in}  \Big( (1 + \delta)  H^{-1}_{-, {\rm in}} \oplus H^{-1}_{-, {\rm ex}} - \Lambda_q^{-1} \Big) =
\Big( (1 + \delta)  H^{-1}_{-, {\rm in}} \oplus H^{-1}_{-, {\rm ex}} - \Lambda_q^{-1} \Big) \Lambda_q r_{\rm in},
$$
and since $ (1 + \delta)  H^{-1}_{-, {\rm in}} \oplus H^{-1}_{-, {\rm ex}}  =  V_{-,0} + H_0^{-1} $, we have
$$
R_{\rm in} = \Lambda_q^2 \Big( H_0^{-1} - \Lambda_q^{-1} +  V_{-,0}   \Big)  \, r^2_{\rm in} \, \Big( H_0^{-1} - \Lambda_q^{-1} +  V_{-,0} \Big).
$$
Moreover,
$$
 R_{\rm in} (H_0^{-1} - \Lambda_q^{-1}) H_0 \omega P_q (p_q \otimes \gp)=  R_{\rm in} (\omega -  \Lambda_q^{-1} \omega H_0) P_q (p_q \otimes \gp)+  R_{\rm in} [\omega,H_0] P_q (p_q \otimes \gp) =0.
 $$
Using this relation and the dual one, we deduce
$$
M^-_{4,q} = (p_q \otimes \gp)  P_q \omega  H_0  (V_{-,0} - R_{\rm in})  H_0 \omega P_q (p_q \otimes \gp)
$$
$$
= (p_q \otimes \gp) P_q \omega  H_0  \Big( V_{-,0} - \Lambda_q^2 V_{-,0}  r^2_{\rm in} V_{-,0}  \Big)  H_0 \omega P_q (p_q \otimes \gp) .
$$
By Lemma \ref{lm2},
    \bel{m31}
n_+(s;M^-_{4,q})
 \geq  n_+(2s; (p_q \otimes \gp) P_q \omega  H_0   V_{-,0} H_0 \omega P_q (p_q \otimes \gp) - {\rm Tr}\,\one_{[1/2,\infty)}(\Lambda_q^2 V_{-,0}  r^2_{\rm in} V_{-,0}) .
 \ee
Pick $\kappa \in (0,b)$,  and write
%let us introduce the self-adjoint operator $H_0- \kappa \one_K$ in the expression of $V_{-,0}$ (see ):
\bel{decV0}
V_{-,0}
%(1 + \delta)  H^{-1}_{-, {\rm in}} \oplus H^{-1}_{-, {\rm ex}} - H_0^{-1}
= \Big(  (1 + \delta)  H^{-1}_{-, {\rm in}} \oplus H^{-1}_{-, {\rm ex}}  - ( H_0- \kappa \oneoless)^{-1} \Big) + \Big(  ( H_0- \kappa \oneoless)^{-1} - H_0^{-1} \Big),
\ee
the operator $V_{-,0}$ being defined in \eqref{defv0}.
Now choose $\kappa$ sufficiently small so that $\kappa (1+\delta) \delta^{-1}$ is smaller than the ground state of $H_{-, {\rm in}}$.
Then on $\gD((H_0- \kappa \oneoless)^{\frac12}) = {\rm H}_A^1(\re^3)$ the quadratic form of $\frac1{1 + \delta}  H_{-, {\rm in}} \oplus H_{-, {\rm ex}} $ is dominated by the one of $H_0- \kappa \oneoless$. More precisely, for any $u \in {\rm H}_A^1(\re^3)$ we have
$$
\|\Pi(A) u\|^2_{{\rm L}^2(\omex)} + \|\Pi(A) u\|^2_{{\rm L}^2(\omin)} - \kappa \|u\|^2_{{\rm L}^2(\Omega_<)}  \geq \|\Pi(A) u\|^2_{{\rm L}^2(\omex)} + \frac1{1 + \delta}  \|\Pi(A) u\|^2_{{\rm L}^2(\omin)}.
$$
By \cite[Proposition 2.1 (i)]{PuRo07}, this shows that the first term of \eqref{decV0} is a non-negative operator. Moreover, the resolvent identity implies
$$
\Big(( H_0- \kappa \oneoless)^{-1} -  H_0^{-1}  \Big)=  \kappa H_0^{-1} \oneoless  \Big( I + \kappa \oneoless ( H_0- \kappa \oneoless)^{-1}\oneoless  \Big) \oneoless H_0^{-1} \geq \kappa H_0^{-1} \oneoless  H_0^{-1}.
$$
Taking into account \eqref{decV0}, we get
$$
V_{-,0} \geq \Big( ( H_0- \kappa \oneoless)^{-1} -  H_0^{-1}  \Big) \geq \kappa H_0^{-1} \oneoless  H_0^{-1},$$
and the mini-max principle implies
    \bel{m32}
    n_+(s; (p_q \otimes \gp) P_q \omega  H_0   V_{-,0} H_0 \omega P_q (p_q \otimes \gp) \geq n_+(s; \kappa (p_q \otimes \gp) P_q \omega  \oneoless \omega P_q (p_q \otimes \gp)), \quad s>0.
    \ee
Finally, taking into account \eqref{m27a}, by analogy with \eqref{m27}, we obtain
    \bel{m33}
    n_+\left(s; (p_q \otimes \gp) P_q \omega  \oneoless \omega P_q (p_q \otimes \gp)\right) = n_+\left(\|\omega_4\|^{2}_{L^2(\re)}s;  p_q w_< p_q)\right), \quad s>0.
    \ee
    Now \eqref{m30} follows from \eqref{m31}, \eqref{m32}, and \eqref{m33}.
    \epf

    \subsection{Upper bounds of $n_+(s; \cM_q^\pm)$}
\label{ss2}

\beprl{3a}
 Let $q \in \Z_+$. Then there exist constants $C_q^\pm > 0$ such that
 \bel{m2}
 n_+(s;M_{4,q}^\pm) \leq n_+(C_q^\pm s; p_0 \, \one_{\pipe({\rm supp}\,\omega)} \, p_0) + O(1), \quad s>0.
    \ee
    \epr
    \bepf
    Evidently,
    $$
    M^\pm_{4,q} \leq \|V_\pm\| M_{5,q}
    $$
    where
    $$
    M_{5,q} : = (p_q \otimes \gp) \cT_q^*  \cT_q (p_q \otimes \gp).
    $$
    Similarly to $M^\pm_{4,q}$, the operator $M_{5,q}$ will be considered as a compact self-adjoint operator in the Hilbert space $(p_q \otimes \gp) L^2(\re^3)$. Then,
    \bel{m5}
    n_+(s;M^\pm_{4,q}) \leq  n_+(s;\|V_\pm\| M_{5,q}), \quad s>0.
    \ee

   % {\em Remark}: The above step is the first one where the upper bound deviates strongly from the lower bound which I believe is more delicate, and hence more difficult. \\

    Set $\omega_0 : = \Lambda_q \omega - \Delta \omega$. Define the operator
    $$
    M_{6,q} : = p_q \left(\sum_{j,k=0}^2 g_j^* w_{jk} g_k\right) p_q
    $$
    where
    $$
    g_0 = \Ipe, \quad g_1 = a^*, \quad g_2 = a,
    $$
    and
    $$
    w_{jk}(\xpe) : = \int_{\re} \overline{\omega_j}(\xpe,\xpa) \omega_k(\xpe,\xpa) d\xpa, \quad \xpe \in \rd, \quad j,k=0,1,2.
    $$
    Then the operator $M_{5,q}$ with domain $(p_q \otimes \gp) L^2(\re^3)$ is unitarily equivalent to the operator $\|\omega_4\|^2_{L^2(\re)} M_{6,q}$ with domain $p_q L^2(\rd)$. Therefore,
    \bel{m7}
    n_+(s;M_{5,q}) = n_+\left(s\|\omega_4\|^{-2}_{L^2(\re)}; M_{6,q}\right), \quad s>0.
    \ee
    In the Appendix \ref{as} we show that the operator $M_{6,q}$ is unitarily equivalent to
    \bel{f5}
    M_{7,q} : = p_0 \upsilon_q p_0,
    \ee
     where $\upsilon_q : \rd \to \re$ is an appropriate bounded multiplier so that $M_{7,q}$ is self-adjoint on its domain $p_0 L^2(\rd)$. More precisely, if $q \geq 1$, we have
     \begin{align} \label{f14}
     \upsilon_q &: = {\rm L}_{q}\left(-\frac{\Delta}{2b}\right) w_{00} + 2b(q+1) {\rm L}_{q+1}\left(-\frac{\Delta}{2b}\right) w_{11} + 2bq {\rm L}_{q-1}\left(-\frac{\Delta}{2b}\right) w_{22}\\[3mm] \nonumber
    &- 8 {\rm Re}\,{\rm L}_{q-1}^{(2)}\left(-\frac{\Delta}{2b}\right)\frac{\partial^2 w_{21}}{\partial \zeta^2}
    - 4 {\rm Im}\,{\rm L}_{q}^{(1)}\left(-\frac{\Delta}{2b}\right)\frac{\partial w_{01}}{\partial \zeta}
    - 4 {\rm Im}\,{\rm L}_{q-1}^{(1)}\left(-\frac{\Delta}{2b}\right)\frac{\partial w_{20}}{\partial \zeta},
    \end{align}
    where
    %\bel{39}
    $$
      {\rm L}_q^{(m)}(t) : = \sum_{j=0}^q \binom{q+m}{q-j} \frac{(-t)^j}{j!}, \quad t \in \re, \quad q \in \Z_+, \quad m \in \Z_+,
      $$
     % \ee
     are the Laguerre polynomials;
      as usual, we write ${\rm L}_q^{(0)} = {\rm L}_q$. If $q=0$, then
      \bel{f16}
      \upsilon_0 : =  w_{00} + 2b {\rm L}_{1}\left(-\frac{\Delta}{2b}\right) w_{11}  - 4 {\rm Im}\,\frac{\partial w_{01}}{\partial \zeta} .
      \ee
      Therefore,
      \bel{f18}
      n_+(s; M_{6,q}) = n_+(s; M_{7,q}), \quad s>0.
      \ee
      Note that $\upsilon_q \in C_0^\infty(\rd;\re)$ and we have
      $$
      \mu_q : = \max_{\xpe \in \rd} \upsilon_q(\xpe) \in (0,\infty).
      $$
      Indeed, if $\upsilon_q \leq 0$, then \eqref{m4}, \eqref{m5}, and \eqref{f18} would imply that $M_{4,q}^\pm \leq 0$ which is impossible by Propositions  \ref{Sy1} and \ref{Sy2}.
      Moreover, it is easy to check that
      $$
      {\rm supp}\,\upsilon_q \subset \pipe(\rm supp\,\omega).
      $$
      Therefore,
      $$
      M_{7,q} \leq  \mu_q \, p_0 \,\one_{\pipe(\rm supp\,\omega)} \, p_0,
      $$
      and, hence,
      \bel{m6}
      n_+(s; M_{7,q}) \leq  n_+\left(s; \mu_q \, p_0 \, \one_{\pipe(\rm supp\,\omega)} \, p_0\right), \quad s>0.
      \ee
      Now \eqref{m1} follows from \eqref{m5}, \eqref{m7}, \eqref{f18}, and \eqref{m6}.
     \epf
    \subsection{Properties of the logarithmic capacity}
\label{ss55}
First, we list several elementary properties of the logarithmic capacity (see e.g. \cite[Chapter 5]{Ran95}):\\
(i) Let $\cE_1, \cE_2$ be Borel subsets of $\rd$ such that $\cE_1 \subset \cE_2$. Then, evidently,
$$
    \capa(\cE_1) \leq \capa(\cE_2).
    $$
    %(ii) Let $\cK \subset \rd$ be a compact set, and $Q(\cK)$ be the convex hull of $\cK$ i.e. $Q(\cK)$ is the complement of the unbounded connected component of $\rd \setminus \cK$. Then we have
    %\bel{au41}
    %\capa(\cK) = \capa(Q(\cK)) = \capa(\partial Q(\cK)).
   %\ee
    (ii) Let $\cK \subset \rd$ be a compact set. For $\delta > 0$, put
    $$
    \cK_\delta : = \left\{\xp \in \rd \, | \, {\rm dist}\,(\xpe, \cK) \leq \delta\right\}.
    $$
    Then we have
    \bel{au42}
    \lim_{\delta \downarrow 0} \capa(\cK_\delta) = \capa(\cK).
    \ee
    Next, we formulate a result which allows to approximate the logarithmic capacity of a bounded plane domain by the logarithmic capacities of curves contained in the domain. Let $\gamma \subset \rd$ be a Jordan curve, i.e. a simple closed curve. We will say that $\gamma$ is $C^2$-smooth if there exists a $C^2$-smooth diffeomorphism ${\bf x} : \Sbb^1 \to \gamma.$
    \begin{proposition} \label{pfin10} {\rm \cite[Proposition 5.6]{CaRaTe19}} Let $\cD \subset \rd$ be a bounded domain. Then there exists a sequence $\left\{\gamma_j\right\}_{j \in \N}$ of $C^2$-smooth Jordan curves such that $\gamma_j \subset \cD$ and
    \bel{fin61}
    \lim_{j \to \infty} \capa(\gamma_j) = \capa(\overline{\cD}).
    \ee
    \end{proposition}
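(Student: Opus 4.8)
\emph{Proof proposal.} The plan is to reduce the statement to an approximation property of the logarithmic capacity and then to manufacture the curves by smoothing a suitable compact subset of $\cD$. For the reduction, recall the standard fact that filling in the bounded components of the complement of a compact set leaves its logarithmic capacity unchanged (\cite[Chapter 5]{Ran95}); applied to a Jordan curve $\gamma$ it says that $\capa(\gamma)$ equals the capacity of the closed Jordan region $\overline{{\rm int}\,\gamma}$ bounded by $\gamma$. Hence, if $\gamma\subset\cD$ is a $C^2$-smooth Jordan curve with ${\rm int}\,\gamma\supset\cK$ for some compact $\cK\subset\cD$, then by monotonicity (property (i)) together with this fact, $\capa(\cK)\le\capa(\overline{{\rm int}\,\gamma})=\capa(\gamma)\le\capa(\overline{\cD})$, the last step because $\gamma\subset\cD\subset\overline{\cD}$. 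So it suffices to produce, for each $\epsilon>0$, a compact set $\cK\subset\cD$ with $\capa(\cK)>\capa(\overline{\cD})-\epsilon$ and a $C^2$-smooth Jordan curve $\gamma\subset\cD$ whose interior contains $\cK$; then $\capa(\overline{\cD})-\epsilon<\capa(\gamma)\le\capa(\overline{\cD})$, and the choice $\epsilon=1/j$ yields the required sequence $\{\gamma_j\}_{j\in\N}$.

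To produce the compact set, put $F_\delta:=\{x\in\cD:{\rm dist}(x,\rd\setminus\cD)\ge\delta\}$ for $\delta>0$; these sets are compact, increase to $\cD$ as $\delta\downarrow0$, and are cofinal among the compact subsets of $\cD$, so $\capa(F_\delta)\uparrow\capa(\cD)$ by the continuity of the (logarithmic, hence Choquet) capacity along increasing sequences. Moreover $\capa(\cD)=\capa(\overline{\cD})$: the inequality ``$\le$'' is monotonicity, while ``$\ge$'' is a standard fact of potential theory — one smooths the equilibrium measure of $\overline{\cD}$ by convolving with a mollifier of small radius (the energy does not increase, as $z\mapsto\ln{|z|^{-1}}$ is superharmonic) and discards the small amount of mass falling outside a compact subset of $\cD$; see \cite[Chapter 5]{Ran95}. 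Fixing $\delta$ with $\capa(F_\delta)>\capa(\overline{\cD})-\epsilon$ and using that $\cD$ is open and connected, we enlarge $F_\delta$ to a connected compact set $\cK\subset\cD$ (cover $F_\delta$ by finitely many closed disks lying in $\cD$ and join them by arcs in $\cD$), so that $\capa(\cK)\ge\capa(F_\delta)>\capa(\overline{\cD})-\epsilon$.

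To produce the curve, set $r:={\rm dist}(\cK,\rd\setminus\cD)>0$ and choose $\chi\in C_0^\infty(\rd;[0,1])$ with $\chi\equiv1$ on $\cK$ and ${\rm supp}\,\chi\subset\{x:{\rm dist}(x,\cK)<r\}\subset\cD$. By Sard's theorem pick a regular value $t\in(0,1)$ of $\chi$; then $\{\chi\ge t\}$ is a compact subset of $\cD$ whose boundary is a finite disjoint union of $C^\infty$ Jordan curves, the connected set $\cK$ lies in a single component $W$ of $\{\chi\ge t\}$, and the boundary component $\gamma$ of $W$ adjacent to the unbounded component of $\C\setminus W$ is a $C^\infty$, hence $C^2$-smooth, Jordan curve in $\cD$ with $\cK\subset{\rm int}\,\gamma$, as required.

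The genuinely delicate point is the identity $\capa(\cD)=\capa(\overline{\cD})$ combined with the requirement that $\gamma$ lie \emph{inside} $\cD$: the level sets one would naturally reach for — those of the Green function of $\C\setminus\overline{\cD}$, or images of circles under its exterior conformal map — lie on the far side of $\partial\cD$, so the construction is forced to approximate the outer boundary of $\cD$ from within, which for multiply connected or irregularly bounded $\cD$ is exactly what makes the enlargement to a connected $\cK$ and the Sard smoothing necessary. The remaining ingredients — monotonicity, the filling-in principle, continuity of capacity along increasing sequences, Sard's theorem, and the Jordan curve separation theorem — are routine.
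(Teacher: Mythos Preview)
The paper does not prove Proposition~\ref{pfin10}; it is quoted from \cite{CaRaTe19}, so there is no paper proof to compare with directly. Note also that within the paper the identity $\capa(\cD)=\capa(\overline{\cD})$ is Corollary~\ref{ffin10}, \emph{deduced from} Proposition~\ref{pfin10}.

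Your reduction via the filling-in principle, the enlargement of $F_\delta$ to a connected compact $\cK\subset\cD$, and the Sard-level-set construction of a smooth Jordan curve $\gamma\subset\cD$ with $\cK\subset{\rm int}\,\gamma$ are all correct and pleasant. The genuine gap is your appeal to $\capa(\cD)=\capa(\overline{\cD})$. Using it here is circular relative to the paper's logic, and your independent mollification sketch does not work as stated: the equilibrium measure $\mu$ of $\overline{\cD}$ is supported on the \emph{outer} boundary, so $\mu*\phi_\epsilon$ places mass of order $1/2$ in the unbounded complementary component (and possibly further mass in the holes of $\cD$), not a ``small amount''. Discarding that mass and renormalizing does not yield an energy close to $I(\mu)$, because the logarithmic kernel is not sign-definite and the cross terms between the inside and outside pieces are of the same order as the total energy. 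Consequently you have not produced a compact $\cK\subset\cD$ with $\capa(\cK)>\capa(\overline{\cD})-\epsilon$, and the chain $\capa(\overline{\cD})-\epsilon<\capa(\cK)\le\capa(\gamma)\le\capa(\overline{\cD})$ breaks at its first link. As you yourself observe at the end, the whole difficulty of the proposition is to reach $\capa(\overline{\cD})$ from \emph{inside} $\cD$; your argument presupposes this rather than establishing it.
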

    \begin{corollary} \label{ffin10}
    Let $\cD \subset \rd$ be a bounded domain. Then
    \bel{fin62}
    \capa(\cD) = \capa(\overline{\cD}).
    \ee
    \end{corollary}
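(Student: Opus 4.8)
The plan is to bracket $\capa(\cD)$ between two quantities that both converge to $\capa(\overline{\cD})$, using only monotonicity of the logarithmic capacity and the approximation result Proposition \ref{pfin10}. Since $\cD \subset \overline{\cD}$, property (i) of Subsection \ref{ss55} immediately gives the easy inequality
$$
\capa(\cD) \leq \capa(\overline{\cD}).
$$

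For the reverse inequality I would invoke Proposition \ref{pfin10}: since $\cD \subset \rd$ is a bounded domain, there is a sequence $\{\gamma_j\}_{j \in \N}$ of $C^2$-smooth Jordan curves with $\gamma_j \subset \cD$ and $\capa(\gamma_j) \to \capa(\overline{\cD})$ as $j \to \infty$. For each fixed $j$, the inclusion $\gamma_j \subset \cD$ together with property (i) yields $\capa(\gamma_j) \leq \capa(\cD)$. Letting $j \to \infty$ gives
$$
\capa(\overline{\cD}) = \lim_{j \to \infty} \capa(\gamma_j) \leq \capa(\cD).
$$
Combining the two displayed inequalities produces \eqref{fin62}.

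The proof is short because the substantive work is already packaged in Proposition \ref{pfin10}; the only genuine obstacle is the construction of the approximating curves $\gamma_j$, and that is cited from \cite[Proposition 5.6]{CaRaTe19}. Once that is granted, no further input — not even the outer continuity \eqref{au42} — is needed, and the corollary follows by the elementary monotonicity argument above.
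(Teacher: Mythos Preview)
Your proof is correct and follows essentially the same approach as the paper: both use the sandwich $\capa(\gamma_j) \leq \capa(\cD) \leq \capa(\overline{\cD})$ coming from monotonicity and the inclusions $\gamma_j \subset \cD \subset \overline{\cD}$, then pass to the limit via \eqref{fin61}. The paper's version is simply terser, stating only that \eqref{fin62} ``follows immediately from $\gamma_j \subset \cD \subset \overline{\cD}$ and \eqref{fin61}.''
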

    \begin{proof}
    Let $\left\{\gamma_j\right\}_{j \in \N}$ be the sequence of curves introduced in Proposition \ref{pfin10}. Then \eqref{fin62} follows immediately from $\gamma_j \subset \cD \subset \overline{\cD}$ and \eqref{fin61}.
    \end{proof}
\subsection{Eigenvalue asymptotics for the Toeplitz operators $p_q \one_\cO p_q$}
\label{ss56}
Let $\cO \subset \rd$ be a bounded domain. Fix $q \in \Z_+$. In the Hilbert space $p_q L^2(\rd)$, consider the operator $p_q \one_\cO p_q$ which is self-adjoint, compact, and non-negative. Moreover, the results of \cite[Subsection 4.3]{RaWa02} imply that all the eigenvalues of ${\rm rank}\,p_q \one_\cO p_q$ are strictly positive. Denote by $\left\{\nu_{k,q}(\cO)\right\}_{k \in \Z_+}$ the non-decreasing sequence of the eigenvalues of $p_q \one_\cO p_q$.

\begin{proposition} \label{fipu}
{\rm \cite[Lemma 2]{FiPu06}} Let $\cO \subset \rd$ be a bounded domain with Lipschitz boundary, and $q \in \Z_+$. Then we have
$$
\lim_{k \to \infty} \left(k! \nu_{k,q}(\cO)\right)^{1/k} = \frac{b \capa(\cO)^2}{2},
$$
or, equivalently,
    \bel{ms20}
    \ln{\nu_{k,q}}(\cO) = -k\,\ln{k} + \left(\gC(\cO) - \ln{2}\right) k + o(k), \quad k \to \infty,
    \ee
    where $\gC(\cO)$ is the constant defined in \eqref{ms50}.
    \end{proposition}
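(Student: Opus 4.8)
The plan is to follow the scheme of \cite{FiPu06}: to realize the Landau level ${\rm Ran}\,p_q$ as a weighted Bargmann--Fock-type space of entire functions, to recognize $\nu_{k,q}(\cO)$ as an extremal quantity for polynomials of degree $k$, and then to extract $\capa(\cO)$ from the classical asymptotics of the leading coefficients of the orthonormal polynomials attached to $\cO$. It is convenient to note first that, by Lemma \ref{lm1}, the $\nu_{k,q}(\cO)$ are also the non-zero eigenvalues of the integral operator $\one_\cO\, p_q\,\one_\cO$ acting in $L^2(\cO)$, whose kernel is the restriction to $\cO\times\cO$ of the explicit (Laguerre-type) integral kernel of $p_q$.

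I would reduce to the lowest level $q=0$. There ${\rm Ker}\,a$ consists of the functions $g\,e^{-\phi}$ with $g$ entire and $\phi(\zeta)=\frac{b}{4}|\zeta|^2$, so $g\,e^{-\phi}\mapsto g$ identifies ${\rm Ran}\,p_0$, up to a constant, with the Fock space $\cF$ of entire functions square-integrable for the weight $e^{-\frac b2|\zeta|^2}\,dm(\zeta)$, in which $\{\zeta^k\}_{k\in\Z_+}$ is orthogonal with $\|\zeta^k\|_{\cF}^{2}=\frac{2\pi}{b}\,k!\,(2/b)^k$; under this identification $p_0\one_\cO p_0$ becomes the Toeplitz operator on $\cF$ with quadratic form $g\mapsto\int_\cO|g(\zeta)|^2\,e^{-\frac b2|\zeta|^2}\,dm(\zeta)$. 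For $q\ge 1$ one argues analogously with the canonical orthonormal basis of ${\rm Ran}\,p_q$ (given in terms of associated Laguerre polynomials): the modulus squared of the $k$-th basis function again concentrates, as $k\to\infty$, on the circle $|\zeta|^2\sim 2k/b$, so that the extremal mechanism, and in particular the role of $\capa(\cO)$, is the same; for brevity I describe only $q=0$, where the link with orthogonal polynomials is most transparent.

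Next, I would use the mini-max principle to sandwich $\nu_{k,0}(\cO)$ between quantities governed by the monic polynomial $\pi_k$ of degree $k$ having least norm in $L^2(\mu)$, where $\mu:=\one_\cO\, e^{-\frac b2|\zeta|^2}\,dm$; recall $\|\pi_k\|^2_{L^2(\mu)}=\kappa_k^{-2}$, with $\kappa_k>0$ the leading coefficient of the $k$-th \emph{orthonormal} polynomial for $\mu$. The lower bound for $\nu_{k,0}(\cO)$ is obtained by using the polynomials of degree $\le k$ as a trial subspace of $\cF$; the upper bound by a Bernstein--Walsh-type estimate which bounds the growth off $\overline\cO$ of a polynomial that is small on $\cO$. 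Dividing by the Fock norms and retaining only the exponential order in $k$, one arrives at
\[
\nu_{k,0}(\cO)=(b/2)^k\,(k!)^{-1}\,\kappa_k^{-2}\,e^{o(k)},\qquad k\to\infty,
\]
the factor $e^{o(k)}$ absorbing all subexponential (in particular all fixed and polynomial) contributions. Making this two-sided estimate precise — that is, controlling the discrepancy between $L^2(\mu)$-orthogonality and Fock-orthogonality on the space of polynomials of degree $\le k$ — is one half of the technical work.

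The remaining and, I expect, harder half is the evaluation of $\lim_{k\to\infty}\kappa_k^{1/k}$ by logarithmic potential theory. By the classical regularity theory of orthonormal polynomials in the plane (Stahl--Totik), one has $\kappa_k^{1/k}\to\capa(K)^{-1}$ whenever the orthogonality measure is regular and supported on the compact set $K$; here $K=\overline\cO$, and since $\cO$ is a bounded Lipschitz domain and the weight $e^{-\frac b2|\zeta|^2}$ is bounded above and below on it, the measure $\mu$ is regular, so $\kappa_k^{1/k}\to\capa(\overline\cO)^{-1}=\capa(\cO)^{-1}$, the last equality by Corollary \ref{ffin10}. (Alternatively, the needed two-sided bound on $\kappa_k$ can be assembled directly from the exhaustion of $\cO$ by analytic Jordan curves of Proposition \ref{pfin10}, for which leading-coefficient asymptotics are elementary, together with the monotonicity of $\capa$ and \eqref{au42}.) Substituting this into the displayed formula yields $\bigl(k!\,\nu_{k,0}(\cO)\bigr)^{1/k}\to\frac b2\,\capa(\cO)^2$, and hence, by the reduction above, $\bigl(k!\,\nu_{k,q}(\cO)\bigr)^{1/k}\to\frac b2\,\capa(\cO)^2$ for every $q\in\Z_+$. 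Finally, \eqref{ms20} follows on taking logarithms and inserting Stirling's formula $\ln k!=k\ln k-k+O(\ln k)$: the linear-in-$k$ coefficient becomes $1+\ln\!\bigl(b\,\capa(\cO)^2\bigr)-\ln 2=\gC(\cO)-\ln 2$, where the summand ``$1$'' is produced by the $-k$ term of Stirling's expansion.
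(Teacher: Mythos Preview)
The paper does not prove Proposition~\ref{fipu}: it is quoted verbatim from \cite[Lemma~2]{FiPu06} and used as a black box, so there is no ``paper's own proof'' to compare against. Your outline is a faithful sketch of the Filonov--Pushnitski argument (Fock-space realization of ${\rm Ran}\,p_0$, mini-max identification of $\nu_{k,0}$ with the extremal polynomial problem, and Stahl--Totik regularity to obtain $\kappa_k^{1/k}\to\capa(\overline\cO)^{-1}$), and the Stirling computation leading to \eqref{ms20} is correct. The one place where your sketch is thin is the passage from $q=0$ to general $q$: ``one argues analogously'' hides the actual mechanism, which in \cite{FiPu06} is a unitary intertwining of $p_q\one_\cO p_q$ with a Toeplitz-type operator on ${\rm Ran}\,p_0$ (cf.\ the operator $\cW$ in Appendix~\ref{a1s} here), not merely a heuristic about where $|\varphi_{k,q}|^2$ concentrates.
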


\begin{corollary} \label{msf1} Under the hypotheses of Proposition \ref{fipu} for any constant $c>0$ we have
    \bel{ms25}
    n_+(c\sqrt{\lambda}; p_q \one_\cO\,p_q) = \frac{1}{2}\, \Phi_1(\lambda; \gC(\cO)) + o\left(\frac{|\ln{\lambda}|}{\ln_2(\lambda)^2}\right), \quad \lambda \downarrow 0.
    \ee
     \end{corollary}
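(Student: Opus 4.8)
The plan is to obtain \eqref{ms25} by inverting the eigenvalue asymptotics \eqref{ms20}. Since $0<\nu_{k,q}(\cO)\le 1$, and by \eqref{ms20} $\nu_{k,q}(\cO)>0$ for all large $k$ with $\nu_{k,q}(\cO)\to0$ as $k\to\infty$, for each $s\in(0,1)$ the quantity $n_+(s;p_q\one_\cO p_q)=\#\{k\in\Z_+:\nu_{k,q}(\cO)>s\}$ is finite, and $\nu_{k,q}(\cO)>s$ is equivalent to $|\ln\nu_{k,q}(\cO)|<|\ln s|$. Writing \eqref{ms20} as $\ln\nu_{k,q}(\cO)=-k\ln k+\alpha k+o(k)$ with $\alpha:=\gC(\cO)-\ln 2$, for every $\veps>0$ there is $k_\veps$ (independent of $\lambda$) with $\big|k\ln k-\alpha k-|\ln\nu_{k,q}(\cO)|\big|\le\veps k$ for $k\ge k_\veps$; comparing with $|\ln s|$ and discarding the finitely many indices $k<k_\veps$ gives, with $N_\beta(M):=\#\{k\in\Z_+:k\ln k-\beta k<M\}$,
\[
N_{\alpha-\veps}(|\ln s|)-k_\veps\;\le\;n_+(s;p_q\one_\cO p_q)\;\le\;N_{\alpha+\veps}(|\ln s|)+k_\veps .
\]
Hence it suffices to find the asymptotics of $N_\beta(M)$ as $M\to\infty$, uniformly for $\beta$ near $\alpha$, and to let $\veps\downarrow0$ at the end.

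For the inversion, $N_\beta(M)$ equals, up to $O(1)$, the positive root $k^\ast$ of $k\ln k-\beta k=M$. Putting $u:=\ln k^\ast-\beta$ (so $k^\ast=M/u$, $\ln k^\ast=\ln M-\ln u$) turns the equation into the Lambert-$W$-type relation $u+\ln u=\ln M-\beta$, whose iterated solution is $u=\ln M-\beta-\ln\ln M+\frac{\beta+\ln\ln M}{\ln M}+O\!\big((\ln\ln M/\ln M)^2\big)$, so that
\[
N_\beta(M)=\frac{M}{\ln M}\Big(1+\frac{\ln\ln M}{\ln M}+\frac{\beta}{\ln M}+O\big((\ln\ln M/\ln M)^2\big)\Big).
\]
Now I substitute $M=|\ln(c\sqrt\lambda)|=\tfrac12|\ln\lambda|-\ln c$, $\beta=\alpha$, and expand using $\ln M=\lnd(\lambda)-\ln 2+O(|\ln\lambda|^{-1})$ and $\ln\ln M=\lnt(\lambda)-\ln 2/\lnd(\lambda)+O(\lnd(\lambda)^{-2})$. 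One finds $\frac{M}{\ln M}=\frac{|\ln\lambda|}{2\lnd(\lambda)}\big(1+\frac{\ln 2}{\lnd(\lambda)}+o(\lnd(\lambda)^{-1})\big)$ (the $\ln c$ contributing only a term $O(\lnd(\lambda)^{-1})$) and $1+\frac{\ln\ln M}{\ln M}+\frac{\alpha}{\ln M}=1+\frac{\lnt(\lambda)}{\lnd(\lambda)}+\frac{\alpha}{\lnd(\lambda)}+o(\lnd(\lambda)^{-1})$. Multiplying, the two copies of $\ln 2$ merge with $\alpha$ through $\alpha+\ln 2=\gC(\cO)$ (by \eqref{ms50}), all cross terms and leftover errors are $O(|\ln\lambda|\,\lnt(\lambda)/\lnd(\lambda)^3)$ or smaller, hence $o(|\ln\lambda|/\lnd(\lambda)^2)$, and
\[
N_\alpha(|\ln(c\sqrt\lambda)|)=\frac{|\ln\lambda|}{2\lnd(\lambda)}\Big(1+\frac{\lnt(\lambda)}{\lnd(\lambda)}+\frac{\gC(\cO)}{\lnd(\lambda)}\Big)+o\!\Big(\frac{|\ln\lambda|}{\lnd(\lambda)^2}\Big)=\tfrac12\,\Phi_1(\lambda;\gC(\cO))+o\!\Big(\frac{|\ln\lambda|}{\lnd(\lambda)^2}\Big).
\]
The same expansion shows that $N_{\alpha\pm\veps}(|\ln(c\sqrt\lambda)|)$ differs from $N_\alpha(|\ln(c\sqrt\lambda)|)$ by at most $C\veps\,|\ln\lambda|/\lnd(\lambda)^2$ for $\lambda$ small ($C$ absolute; $\veps$ enters only through the $\beta/\ln M$ term), so letting $\veps\downarrow0$ in the sandwich of the first paragraph yields \eqref{ms25}.

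There is no analytic difficulty here once \eqref{ms20} is granted; the work is pure asymptotic bookkeeping, and the two points to watch are the following. (a) The relation $u+\ln u=\ln M-\beta$ must be expanded one order further than naively expected: it is precisely the correction in $\frac{M}{\ln M}\sim\frac{|\ln\lambda|}{2\lnd(\lambda)}\big(1+\frac{\ln 2}{\lnd(\lambda)}+\ldots\big)$, produced by the $-\ln 2$ in $\ln M\sim\lnd(\lambda)-\ln 2$, that upgrades the constant $\gC(\cO)-\ln 2$ of \eqref{ms20} into the constant $\gC(\cO)$ appearing in $\Phi_1$. (b) One must check that all the $o(\cdot)$ remainders above are uniform as $\lambda\downarrow0$, and note that replacing $s=\lambda$ by $s=c\sqrt\lambda$ shifts $M$ only by the bounded amount $\ln c$, invisible at the order $o(|\ln\lambda|/\lnd(\lambda)^2)$, so that the limit is genuinely independent of $c$.
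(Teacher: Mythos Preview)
Your proof is correct and follows essentially the same route as the paper: both invert the eigenvalue asymptotics \eqref{ms20} by sandwiching $n_+$ between counts of the form $\#\{k:\,k\ln k-\beta k<M\}$ with $\beta=\gC(\cO)-\ln 2\pm\veps$, solve the Lambert-type equation for the root to three asymptotic orders, and let $\veps\downarrow0$. The only cosmetic difference is packaging: the paper first derives $n_+(\lambda;p_q\one_\cO p_q)=\Phi_1(\lambda;\gC(\cO)-\ln 2)+o(|\ln\lambda|/\lnd(\lambda)^2)$ and then passes to $c\sqrt\lambda$ via the scaling identities $\Phi_1(c\lambda;C)=\Phi_1(\lambda;C)+o(1)$ and $\Phi_1(\sqrt\lambda;C)=\tfrac12\Phi_1(\lambda;C+\ln 2)+o(|\ln\lambda|/\lnd(\lambda)^2)$, whereas you substitute $M=|\ln(c\sqrt\lambda)|$ directly and track by hand how the $-\ln 2$ in $\ln M\sim\lnd(\lambda)-\ln 2$ merges with $\alpha=\gC(\cO)-\ln 2$ to produce $\gC(\cO)$. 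Both executions are equivalent and your bookkeeping of the error terms is accurate.
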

     The proof of the corollary can be found in Subsection \ref{as2} of the Appendix.\\

     \begin{corollary} \label{msf2} Under the hypotheses of Proposition \ref{fipu} for any constant $c>0$ we have
    \bel{ms25a}
    \frac{1}{\pi}{\rm Tr}\, \arctan{\left(\frac{p_q \one_\cO\,p_q}{c\sqrt{\lambda}}\right)} = \frac{1}{4}\, \Phi_1(\lambda; \gC(\cO)) + o\left(\frac{|\ln{\lambda}|}{\ln_2(\lambda)^2}\right), \quad \lambda \downarrow 0.
    \ee
     \end{corollary}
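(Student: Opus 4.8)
Set $A := p_q \one_\cO\, p_q \ge 0$ and $s := c\sqrt\lambda$, and recall the eigenvalues $\nu_{k,q}(\cO)$ of $A$ from Proposition \ref{fipu}. Applying the identity $\arctan x = \int_0^x (1+u^2)^{-1}\,du$ to each $\nu_{k,q}(\cO)/s$, summing over $k$ and using Tonelli (all terms are non-negative), one obtains the elementary formula
$$
\frac1\pi\, {\rm Tr}\,\arctan\!\left(\frac{A}{s}\right) = \frac1\pi \int_0^\infty n_+(sv; A)\,\frac{dv}{1+v^2} = \frac1\pi \int_0^\infty n_+(t;A)\,\frac{s\,dt}{s^2+t^2}.
$$
Since $\int_0^\infty (1+v^2)^{-1}\,dv = \pi/2$, this gives
$$
\frac1\pi\, {\rm Tr}\,\arctan\!\left(\frac{A}{s}\right) - \tfrac12\, n_+(s;A) \;=\; \frac1\pi \int_0^\infty \big(n_+(sv;A) - n_+(s;A)\big)\,\frac{dv}{1+v^2}.
$$
By \eqref{ms25}, $\tfrac12 n_+(s;A) = \tfrac14\,\Phi_1(\lambda;\gC(\cO)) + o\!\big(|\ln\lambda|/\lnd(\lambda)^2\big)$, so \eqref{ms25a} reduces to showing that the last integral is $o\!\big(|\ln\lambda|/\lnd(\lambda)^2\big)$.

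The plan is to estimate this integral by splitting $(0,\infty)$ at $v=1/B$ and $v=B$ with a slowly growing cut-off $B = B(\lambda) := \lnd(\lambda)^2 \to \infty$. On $[1/B,B]$, monotonicity of $v\mapsto n_+(sv;A)$ bounds $|n_+(sv;A) - n_+(s;A)|$ by the maximum of $|n_+(sB^{\pm1};A) - n_+(s;A)|$; writing $sB^{\pm1} = c\sqrt{\mu}$ with $\mu := \lambda B(\lambda)^{\pm2}$ and applying \eqref{ms25} at scale $\sqrt\mu$, an elementary computation using $\ln B(\lambda) = 2\lnt(\lambda) = o(|\ln\lambda|/\lnd(\lambda))$ shows $|\ln\mu| = |\ln\lambda|(1+o(1))$ and $\lnd(\mu) = \lnd(\lambda) + o(1)$, whence $\Phi_1(\mu;\gC(\cO)) = \Phi_1(\lambda;\gC(\cO)) + o\!\big(|\ln\lambda|/\lnd(\lambda)^2\big)$ and the remainder in \eqref{ms25} at scale $\sqrt\mu$ is likewise $o\!\big(|\ln\lambda|/\lnd(\lambda)^2\big)$; hence $n_+(sv;A) - n_+(s;A) = o\!\big(|\ln\lambda|/\lnd(\lambda)^2\big)$ uniformly for $v\in[1/B,B]$, and this part of the integral is $o\!\big(|\ln\lambda|/\lnd(\lambda)^2\big)$. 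For $v > B$ one has $0\le n_+(sv;A)\le n_+(s;A)$, so that contribution is at most $n_+(s;A)\int_B^\infty(1+v^2)^{-1}dv \le n_+(s;A)/B = O\!\big(|\ln\lambda|/(B\,\lnd(\lambda))\big) = o\!\big(|\ln\lambda|/\lnd(\lambda)^2\big)$. For $0<v<1/B$,
$$
\int_0^{1/B}\big(n_+(sv;A) - n_+(s;A)\big)\frac{dv}{1+v^2} \;\le\; \frac1s\int_0^{s/B} n_+(t;A)\,dt \;=\; \frac1s\sum_{k\in\Z_+}\min\!\big(\nu_{k,q}(\cO),\,s/B\big)\;\le\; \frac1B\,n_+(s/B;A) + \frac1s\!\!\sum_{k\ge n_+(s/B;A)}\!\!\nu_{k,q}(\cO),
$$
and the first term is $o\!\big(|\ln\lambda|/\lnd(\lambda)^2\big)$ by \eqref{ms25} (applied at scale $c\sqrt{\lambda/B(\lambda)^2}$ and the same stability check).

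The only genuinely delicate estimate is the tail sum $\sum_{k\ge K}\nu_{k,q}(\cO)$ with $K := n_+(s/B;A)$. Here I would exploit that \eqref{ms20} forces $\ln\nu_{k,q}(\cO) = -k\ln k\,(1+o(1))$, which together with the monotonicity of $k\mapsto\nu_{k,q}(\cO)$ yields $\nu_{2K,q}(\cO)\le e^{-K\ln K/2}\,\nu_{K,q}(\cO)$ for all $K\ge K_0(\cO,q)$; a dyadic summation then gives $\sum_{k\ge K}\nu_{k,q}(\cO)\le 2K\,\nu_{K,q}(\cO)$. Since $\nu_{K,q}(\cO)\le s/B$ and $K = O\!\big(|\ln\lambda|/\lnd(\lambda)\big)$ by \eqref{ms25}, the second term is $O\!\big(|\ln\lambda|/(B\,\lnd(\lambda))\big) = o\!\big(|\ln\lambda|/\lnd(\lambda)^2\big)$, which completes the argument.

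The main obstacle is precisely this combination of requirements. The prescribed error $o(|\ln\lambda|/\lnd(\lambda)^2)$ is of the same order as the perturbation-dependent term $\gC(\cO)\,\Phi_0(\lambda)/\lnd(\lambda)$ of $\Phi_1$, so a fixed cut-off $B$ is inadmissible — the two $\arctan$-tails would then leave a residue of order $|\ln\lambda|/\lnd(\lambda)$ — which is what forces both the slowly growing window $B(\lambda)\to\infty$, and hence the stability check that $\Phi_1$ and the remainder of \eqref{ms25} are unchanged modulo $o(|\ln\lambda|/\lnd(\lambda)^2)$ under the rescaling $\lambda\mapsto\lambda B(\lambda)^{\pm2}$, and the super‑exponential tail bound $\sum_{k\ge K}\nu_{k,q}(\cO)\lesssim K\,\nu_{K,q}(\cO)$ above. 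Everything else is a routine change of variables together with \eqref{ms25}.
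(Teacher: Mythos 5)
Your proposal is correct and takes essentially the same route as the paper's proof in Subsection \ref{as3}: both rest on the identity \eqref{sep1}, a splitting of the resulting integral at slowly varying cutoffs, monotonicity of $n_+$ combined with \eqref{ms25}, and the stability of $\Phi_1$ under mild rescalings of $\lambda$ (cf.\ \eqref{ms26}--\eqref{ms29}). The only real deviation is your handling of the region near $v=0$, where you bound the eigenvalue tail sum via the super-exponential decay coming from \eqref{ms20}, whereas the paper cuts at $t=|\ln\lambda|^{-1}$ and simply integrates the bound $n_+(\lambda t;\cdot)\lesssim\Phi_1(\lambda t;\cdot)$ there; both treatments are adequate, yours being a bit more laborious but equally valid.
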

     The proof of the corollary is contained in Subsection \ref{as3} of the Appendix.\\
    \subsection{Proof of \eqref{m13} - \eqref{ms2}}
\label{ss57}
    For $\lambda > 0$ small enough,  and $q \in \Z_+$,  set
     $$
    \Xi_{q,1}^-(\lambda) : = \frac{-\xi(\Lambda_q-\lambda;H_-,H_0) - 2^{-1} \Phi_1(\lambda; 1+\ln{b})}{2^{-1} \Phi_0(\lambda) \lnd(\lambda)^{-1}},
    $$
    $$
    \Xi_{q,2}^\pm(\lambda) : = \frac{\pm\xi(\Lambda_q+\lambda;H_\pm,H_0) - 2^{-2} \Phi_1(\lambda; 1+\ln{b})}{2^{-2} \Phi_0(\lambda) \lnd(\lambda)^{-1}}.
    $$
    Then \eqref{m13} is equivalent to
    \bel{fin63}
    \lim_{\lambda \downarrow 0} \Xi_{q,1}^-(\lambda) = \ln{\capa(\cO_{\rm in})^2},
    \ee
    while \eqref{ms2} is equivalent to
    \bel{fin64}
    \lim_{\lambda \downarrow 0} \Xi_{q,2}^\pm(\lambda) = \ln{\capa(\cO_{\rm in})^2}.
    \ee
    Let us first prove \eqref{fin63}, starting with the corresponding lower asymptotic  bound.
    Combining \eqref{f7} and \eqref{m4} with \eqref{m30}, we find that for each domain $\Omega_<$ such that $\overline{\Omega_<} \subset \Omega_{\rm in}$ there exists a constant $c>0$ such that
     \bel{ms57a}
     -\xi(\Lambda_q-\lambda;H_-,H_0) \geq n_+(c\sqrt{\lambda}; p_q w_{\Omega_<} p_q) + O(1), \quad \lambda \downarrow 0,
     \ee
     where $w_{\Omega_<}$ is the function defined in \eqref{fin60}. Let us construct a suitable sequence of domains compactly embedded in $\Omega_{\rm in}$.
     Let
     $$
     \gamma_j^< : = \left\{{\bf x}_j(s) \, | \, s \in \Sbb^1\right\}, \quad j \in \N,
     $$
     be a sequence of $C^2$-smooth Jordan curves such that $\gamma_j^< \subset \cO_{\rm in}$ and
     \bel{fin65}
     \lim_{j \to \infty} \capa(\gamma_j^<) = \capa(\overline{\cO_{\rm in}}) = \capa(\cO_{\rm in}),
     \ee
    whose existence is guaranteed by Proposition \ref{pfin10}. Let
     $$
     {\bf n}_j(s) : = \frac{(x'_{2,j}(s), -x'_{1,j}(s))}{|{\bf x}_j'(s)|}, \quad s \in \Sbb^1,
     $$
     be the normal unit at $\gamma_j^<$, $j \in \N$. For $\tau \in (0,1]$ set
     $$
     \cO_{j,\tau}^< : = \left\{{\bf x}_j(s) + t {\bf n}_j(s) \, | \, s \in \Sbb^1, \; |t| < \tau \varepsilon_j\right\}
     $$
     where $\varepsilon_j > 0$, $j \in \N$, is chosen so small that $ \overline{\cO_{j,1}^<} \subset \cO_{\rm in} $ and the boundary $\partial \cO_{j,1}^<$ is Lipschitz. Then, evidently, the domains $ \cO_{j,\tau}^<$ with $\tau \in (0,1)$ have the same properties. Set
     $$
     \Omega_{j,\tau} : = \left\{(\xpe,\xpa) \in \Omega_{\rm in} \, | \, \xpe \in \cO_{j,\tau}^<\right\}, \quad j \in \N, \quad \tau \in (0,1].
     $$
     Evidently, $\Omega_{j,\tau} $ is a domain and $\overline{\Omega_{j,\tau}} \subset \Omega_{\rm in}$. Since $ w_{\Omega_{j,\tau}}(\xpe) \geq 0$ for every $\xpe \in \rd$ and $ w_{\Omega_{j,\tau}}(\xpe) > 0$ if and only if $\xpe \in \cO_{j,\tau}^<$, we have
     \bel{fin80}
     w_{\Omega_{j,\tau}} \geq \one_{\overline{\cO^<_{j,\tau/2}}} \,w_{\Omega_{j,\tau}}  \geq c_1  \one_{\overline{\cO^<_{j,\tau/2}}}
     \ee
     where
     $$
     c_1 : = \inf_{\xpe \in \overline{\Omega_{j,\tau/2}}} w_{\Omega_{j,\tau}}(\xpe).
      $$
      Since
      $$
       w_{\Omega_{j,\tau}}(\xpe) = \left\{
       \begin{array} {l}
       w_{\Omega_{\rm in}}(\xpe) \quad {\rm if} \quad \xpe \in \cO^<_{j,\tau},\\[2mm]
       0 \quad {\rm if} \quad \xpe \in \rd \setminus \cO^<_{j,\tau},
       \end{array}
       \right.
       $$
       we have
       \bel{fin80a}
     c_1  = \inf_{\xpe \in \overline{\Omega_{j,\tau/2}}} w_{\Omega_{\rm in}}(\xpe).
     \ee
     Let us now show that if $\cK \subset \cO_{\rm in}$ is a compact set, then
     \bel{fin80b}
     \inf_{\xpe \in \cK} w_{\Omega_{\rm in}}(\xpe) > 0.
     \ee

     Let $\ype \in \cO_{\rm in}$. Then there exists $\ypa \in \re$ such that $y : = (\ype, \ypa) \in \Omega_{\rm in}$.
     Since $\Omega_{\rm in}$ is open, there exists $r = r(\ype)>0$ such that $\cB_r(\ype) \times (\ypa-r,\ypa+r) \subset \Omega_{\rm in}$
     where $\cB_r(\ype): = \left\{\xpe \in \rd \, | \, |\xpe - \ype| < r\right\}$. Therefore, for every $\ype \in \cO_{\rm in}$ there exists
     $r = r(\ype)>0$ such that for each $\xpe \in \cB_r(\ype)$ we have
     $$
     w_{\Omega_{\rm in}}(\xpe) = \int_\re\one_{\Omega_{\rm in}}(\xpe,t) dt \geq \int_{\ypa-r}^{\ypa+r}\,dt = 2r.
     $$
     Since $\cK$ is a compact subset of $\cO_{\rm in}$, there exists a finite set $\left\{y_{\perp, j}\right\}_{j =1}^J \subset \cO_{\rm in}$ with $J \in \N$ such that
     $\cK \subset \cup_{j = 1}^J \cB_{r(y_{\perp, j})}(y_{\perp, j})$. Set $\rho : = \min_{j=1,\ldots J} \, r(y_{\perp.j})$. Then we have
     $$
     w_{\Omega_{\rm in}}(\xpe) \geq 2 \rho > 0, \quad \xpe \in \cK,
     $$
     which implies \eqref{fin80b}. By \eqref{fin80a} and \eqref{fin80b}, we obtain $c_1>0$.
     Therefore, \eqref{fin80} and the mini-max principle yield
     \bel{fin66}
     n_+(s; p_q w_{\Omega_{j,\tau}}p_q) \geq  n_+(s; c_1 p_q \one_{\overline{\cO^<_{j,\tau/2}}} p_q) =  n_+(c_1^{-1}s; p_q \one_{\cO^<_{j,\tau/2}} p_q), \quad s>0.
     \ee
     Now, \eqref{ms57a}, \eqref{fin66}, and \eqref{ms25} imply
    \bel{fin67}
    \liminf_{\lambda \downarrow 0} \Xi_{q,1}^-(\lambda) \geq \ln{\capa(\cO^<_{j, \tau/2})^2}.
    \ee
    By $\gamma_j^< \subset \cO^<_{j, \tau/2} \subset \overline{\cO_{\rm in}}$ and \eqref{fin65}, we have
    \bel{fin73}
    \lim_{j \to \infty} \capa(\cO^<_{j, \tau/2}) = \capa(\overline{\cO_{\rm in}}) = \capa(\cO_{\rm in}),
    \ee
    which combined with \eqref{fin67} yields
    \bel{fin68}
    \liminf_{\lambda \downarrow 0} \Xi_{q,1}^-(\lambda) \geq \ln{\capa(\cO_{\rm in})^2}.
    \ee
    Let us now estimate $\Xi_{q,1}^-(\lambda)$ from above.
     Combining \eqref{f7} and \eqref{m4} with \eqref{m2}, we find that for each
     $\omega \in C_0^\infty(\rt;\re)$ satisfying $\omega = 1$  on $\overline{\Omega_{\rm in}}$,
      there exists a constant $c>0$ such that
     \bel{fin69}
     -\xi(\Lambda_q-\lambda;H_-,H_0) \leq n_+(c\sqrt{\lambda}; p_0 \, \one_{\pipe({\rm supp}\, \omega)} \, p_0) + O(1), \quad \lambda \downarrow 0.
     \ee
      For $\delta > 0$ small enough set
      $$
      \Omega_\delta : = \left\{x \in \re^3 \, | \, {\rm dist}\,(x,\Omega_{\rm in}) \leq \delta \right\}, \quad
      \cO_\delta : = \left\{\xpe \in \re^2 \, | \, {\rm dist}\,(\xpe,\cO_{\rm in}) \leq \delta \right\},
      $$
      and choose $\omega$ so that ${\rm supp}\,\omega = \Omega_\delta$. Then we have
      %\bel{fin81}
      $$
      \pipe({\rm supp}\, \omega) = \pipe(\Omega_\delta) \subset \cO_\delta.
      $$
      %\ee
      In order to check the above inclusion, assume that $\xpe \in \pipe(\Omega_\delta)$. Then there exists $x \in \Omega_\delta$ such that
      $\pipe(x) = \xpe$ and $y \in \overline{\Omega_{\rm in}}$ satisfying
      $$
      {\rm dist}\,(x,\Omega_{\rm in}) = {\rm dist}\,(x,\overline{\Omega_{\rm in}}) = |x-y| \leq \delta.
      $$
      Let
      $\ype : = \pipe(y) \in \pipe(\overline{\Omega_{\rm in}}) = \overline{\cO_{\rm in}}$.
      We have
      $$
      |\xpe - \ype| \leq |x-y| \leq \delta.
      $$
      Therefore,
      $$
      {\rm dist}\,(\xpe,\cO_{\rm in}) = {\rm dist}\,(\xpe,\overline{\cO_{\rm in}}) \leq |\xpe-\ype| \leq \delta,
      $$
      and, hence, $\xpe \in \cO_\delta$. \\
    % Let now $\cQ$ be the convex hull of $\overline{\cO_{\rm in}}$, and
    % $\cQ_\delta : = \left\{\xpe \in \re^2 \, | \, {\rm dist}\,(\xpe,\cQ) \leq \delta \right\}$.
     Since $\cO_\delta$ is compact, there exist finite coverings of $\cO_\delta$ by squares with sides parallel to the coordinate axes, of arbitrarily small size. Hence, there exists a domain $\cO_{\delta}^> \subset \rd$ with Lipschitz boundary such that
     \bel{fin70}
     \overline{\cO_{\rm in}} \subset \cO_\delta \subset \cO_{\delta}^>, \quad \overline{\cO_{\delta}^>} \subset \cO_{2\delta}.
     \ee
       Then \eqref{fin69} and the mini-max principle imply
    $$
     -\xi(\Lambda_q-\lambda;H_-,H_0) \leq n_+(c\sqrt{\lambda}; p_0 \, \one_{\cO_{\delta}^>} \, p_0) + O(1), \quad \lambda \downarrow 0,
     $$
     which combined with \eqref{ms25} yields
     \bel{fin71}
     \limsup_{\lambda \downarrow 0} \Xi_{q,1}^-(\lambda) \leq \ln{\capa(\cO^>_{\delta})^2}.
    \ee
    By \eqref{fin70}, \eqref{au42}, and \eqref{fin62}, we have
    \bel{fin75}
    \lim_{\delta \downarrow 0} \capa(\cO_{\delta}^>)  = \capa(\cO_{\rm in}).
    \ee
    Therefore, \eqref{fin71} and \eqref{fin75} imply
    \bel{fin72}
    \limsup_{\lambda \downarrow 0} \Xi_{q,1}^-(\lambda) \leq \ln{\capa(\cO_{\rm in})^2}.
    \ee
    Putting together \eqref{fin68} and \eqref{fin72}, we obtain \eqref{fin63}.\\
     The proof of \eqref{fin64} is quite similar. Note that for any trace-class operator $T = T^* \geq 0$, we have
     \bel{sep1}
     {\rm Tr}\,\arctan{T} = \int_0^\infty n_+(s;T) \frac{ds}{1+s^2}.
     \ee

     Combining \eqref{f8}-\eqref{f9} and \eqref{m4} with \eqref{m22}
      or \eqref{m30}, and \eqref{m2}, and bearing in mind \eqref{sep1} and the mini-max principle, we find that there exist constants $c_< \geq   c_> > 0$ such that
      $$
     \frac{1}{\pi} {\rm Tr}\,\arctan{\left(\frac{c_1 p_q \one_{\cO^<_{j,\tau/2}} p_q}{c_< \sqrt{\lambda}}\right)} + O(1) \leq
     $$
     $$
     %\frac{1}{\pi} {\rm Tr}\,\arctan{\left(\frac{p_q w_{\Omega_<} p_q}{c_< \sqrt{\lambda}}\right)} + O(1) \leq
     \pm\xi(\Lambda_q+\lambda;H_\pm,H_0) \leq
     $$
     \bel{ms61}
      \frac{1}{\pi} {\rm Tr}\,\arctan{\left(\frac{p_0 \one_{\pipe({\rm supp}\,\omega}) p_0}{c_> \sqrt{\lambda}}\right)} + O(1), \quad \lambda \downarrow 0.
     %\leq \frac{1}{\pi} {\rm Tr}\,\arctan{\left(\frac{p_q \one_{\cO^>_{\delta}}) p_q}{c_> \sqrt{\lambda}}\right)} + O(1),
     \ee
    Putting together \eqref{ms61} and \eqref{ms25a}, we get
     $$
     \ln{\capa(\cO^<_{j,\tau/2})^2} \leq \liminf_{\lambda \downarrow 0} \Xi_{q,2}^\pm(\lambda) \leq \limsup_{\lambda \downarrow 0} \Xi_{q,2}^\pm(\lambda) \leq
     \ln{\capa(\cO^>_{\delta})^2},
     $$
     which together with \eqref{fin73} and \eqref{fin75}, implies \eqref{fin64}.
\appendix
\section{}
\label{as}
\subsection{Unitary equivalence of $M_{6,q}$ and $M_{7,q}$}
\label{a1s}
In our proof of the unitary equivalence of the operators $M_{6,q}$ and $M_{7,q}$ (see Subsection \ref{ss2}) we will follow closely the argument of the proof of \cite[Proposition 4.1]{LuRa15}. Set
       \bel{au73}
       \varphi_{k,0}(\xpe) : = \sqrt{\frac{b}{2\pi k!}} \left(\frac{b}{2}\right)^{k/2} \zeta^k e^{-b|x|^2/4}, \quad \xpe \in \rd, \quad k \in \Z_+,
       \ee
        \bel{au74}
       \varphi_{k,q}(x) : = \sqrt{\frac{1}{(2b)^q q!}}(a^*)^q\varphi_{k,0}(x), \quad \xpe \in \rd, \quad k \in \Z_+, \quad q \in {\mathbb N}.
       \ee
       Then $\left\{\varphi_{k,q}\right\}_{k \in \Z_+}$ is an orthonormal basis of $p_q L^2(\rd)$ called sometimes {\em the angular momentum basis}
       (see e.g. \cite{RaWa02} or \cite[Subsection 9.1]{BrPuRa04}). Evidently, for $k \in \Z_+$ we have
       \bel{37}
       a^* \varphi_{k,q} = \sqrt{2b(q+1)}  \varphi_{k,q+1}, \quad q \in \Z_+,
       \quad a \varphi_{k,q} = \left\{
       \begin{array} {l}
       \sqrt{2bq}  \varphi_{k,q-1}, \quad q \geq 1,\\
       0 , \quad q = 0.
       \end{array}
       \right.
       \ee
       Define the unitary operator ${\mathcal W}: p_q L^2(\rd) \to p_0 L^2(\rd)$ by ${\mathcal W} : u \mapsto v$
       where
       %\bel{38}
       $$
       u = \sum_{k  \in \Z_+} c_k \varphi_{k,q}, \quad v = \sum_{k  \in \Z_+} c_k \varphi_{0,k}, \quad \{c_k\}_{k \in \Z_+} \in \ell^2(\Z_+).
       %\ee
       $$
       We will show that
       \bel{36}
       M_{6,q} = {\mathcal W}^* M_{7,q} {\mathcal W}.
       \ee
       For $V \in C_{\rm b}^{\infty}(\rd)$, $m,s \in \Z_+$, and $k,\ell \in \Z_+$, set
       $$
       \Upsilon_{m,s}(V;k,\ell) : = \langle V \varphi_{k,m}, \varphi_{\ell,s}\rangle
       $$
       where $\langle\cdot,\cdot\rangle$ denotes the scalar product in $L^2(\rd)$. Taking into account \eqref{37}, we easily find that
       \bel{f15}
       \langle M_{6,q} u, u\rangle =
       \ee
       $$
        \sum_{k \in \Z_+} \sum_{\ell \in \Z_+} \left(\Upsilon_{q,q}(w_{00};k,\ell) + 2b(q+1)\Upsilon_{q+1,q+1}(w_{11};k,\ell) + 2bq\Upsilon_{q-1,q-1}(w_{22};k,\ell)\right)c_k \overline{c_\ell} \;  +
       $$
       %\bel{56}
       $$
       2{\rm Re} \, \sum_{k \in \Z_+} \sum_{\ell \in \Z_+}\left(2b\sqrt{q(q+1)}\Upsilon_{q+1,q-1}(w_{21};k,\ell)  \right)c_k \overline{c_\ell} \; +
       %\ee
       $$
       $$
       2{\rm Re} \, \sum_{k \in \Z_+} \sum_{\ell \in \Z_+}\left( \sqrt{2b(q+1)} \Upsilon_{q+1,q}(w_{01};k,\ell) + \sqrt{2bq} \Upsilon_{q,q-1}(w_{20};k,\ell)\right)c_k \overline{c_\ell},
       $$
       if $q \geq 1$, and
      \bel{41}
       \langle M_{6,0}  u, u\rangle = \sum_{k \in \Z_+} \sum_{\ell \in \Z_+} \left(\Upsilon_{0,0}(w_{00};k,\ell) + 2b\Upsilon_{1,1}(\omega_{11};k,\ell)\right) c_k \overline{c_\ell} \; +
       \ee
        $$
        +  2\sqrt{2b}\,{\rm Re} \, \sum_{k \in \Z_+} \sum_{\ell \in \Z_+} \, \Upsilon_{1,0}(w_{01};k,\ell) \, c_k \overline{c_\ell}.
       $$
       Moreover,
       \bel{42}
       \langle M_{7,q} \cW u, \cW u\rangle = \sum_{k \in \Z_+} \sum_{\ell \in \Z_+} \Upsilon_{0,0}(\upsilon_q;k,\ell) c_k \overline{c_\ell}, \quad q \in \Z_+.
       \ee
       In \cite[Lemma 9.2]{BrPuRa04} (see also the remark after Eq.(2.2) in \cite{BoBrRa14b}), it was shown that
       \bel{40}
       \Upsilon_{m,m}(V;k,\ell) = \Upsilon_{0,0}\left({\rm L}_m\left(-\frac{\Delta}{2b}\right)V;k,\ell\right), \quad m \in \Z_+,
       \ee
       for any $V \in C_{\rm b}^\infty(\rd)$. Moreover, by \cite[Eq. (4.27)]{LuRa15}, we have
        \bel{55}
       2b\sqrt{q(q+1)} \Upsilon_{q+1,q-1}(V;k,\ell) = \Upsilon_{0,0}\left(-4{\rm L}_{q-1}^{(2)}\left(-\frac{\Delta}{2b}\right)\frac{\partial^2 V}{\partial z^2}; k,\ell\right).
       \ee
       It remains to handle the quantity $\Upsilon_{q+1,q}(V;k,\ell)$ with $q \in \Z_+$. We have
       \bel{f11}
       \Upsilon_{q+1,q}(V;k,\ell) =  \frac{1}{\sqrt{2b(q+1)}} \Upsilon_{q,q}([V,a^*];k,\ell) + \sqrt{\frac{q}{q+1}}  \Upsilon_{q,q+1}(V;k,\ell).
       \ee
       By \eqref{40} and \eqref{f11}, it is not difficult to show by induction that
       \bel{f12}
       \sqrt{2b(q+1)}\,\Upsilon_{q+1,q}(V;k,\ell) = \sum_{j=0}^q \Upsilon_{j,j}([V,a^*];k,l).
       \ee
       Taking into account \eqref{40}, as well as the facts that
       $$
       [V,a^*] = 2i \frac{\partial V}{\partial \zeta},
       $$
       by \eqref{f19}, and that
      $$
      \sum_{j=0}^q {\rm L}_j^{(m)}(t) = {\rm L}_q^{(m+1)}(t), \quad t \in \re, \quad q \in \Z_+, \quad m \in \Z_+,
       $$
       by \cite[Eq. 8.974.3]{GrRy65}, we find that \eqref{f12} implies
       \bel{f13}
       \sqrt{2b(q+1)}\,\Upsilon_{q+1,q}(V;k,\ell) = 2i\Upsilon_{0,0}\left({\rm L}_q^{(1)}\left(-\frac{\Delta}{2b}\right)\frac{\partial V}{\partial \zeta};k,l\right).
       \ee
      By \eqref{49}, \eqref{55}, \eqref{f13}, and the definition \eqref{f14} -- \eqref{f16} of $\upsilon_q$, we find that \eqref{f15}, \eqref{41}, and \eqref{42} imply \eqref{36}.
    \subsection{Proof of Corollary \ref{msf1}}
    \label{as2}
    First of all, we note that  elementary calculations show that for any constants $c>0$ and $C \in \re$ we have
    \bel{ms26}
    \Phi_1(c\lambda; C) =  \Phi_1(\lambda; C) + o(1),
    \ee
    and
    \bel{ms29}
    \Phi_1(\sqrt{\lambda}; C) =  \frac{1}{2}\,\Phi_1(\lambda; C + \ln{2}) + o\left(\frac{|\ln{\lambda}|}{\ln_2(\lambda)^2}\right),
    \ee
    as $\lambda \downarrow 0$. Further, by definition,
    $$
    n_+(\lambda; p_q \one_\cO\,p_q) = \#\,\left\{k \in \Z_+ \, | \, \nu_{k,q} > \lambda\right\}, \quad \lambda>0.
    $$
    Therefore,
    \bel{ms21}
    n_+(\lambda; p_q \one_\cO\,p_q) = \#\,\left\{k \in \Z_+ \, | \, |\ln{\nu_{k,q}}| < |\ln{\lambda}|\right\} + O(1), \quad \lambda \downarrow 0.
    \ee
    Combining \eqref{ms20} and \eqref{ms21}, we find that for every $\veps > 0$ we have
    $$
    \#\,\left\{k \in \Z_+ \, | \, k\ln{k} - (\gC(\cO) - \ln{2} - \veps)k < |\ln{\lambda}|\right\} + O(1) \leq
    $$
    $$
    n_+(\lambda; p_q \one_\cO\,p_q) \leq
    $$
    \bel{ms22}
    \leq \#\,\left\{k \in \Z_+ \, | \, k\ln{k} - (\gC(\cO) - \ln{2} + \veps)k < |\ln{\lambda}|\right\} + O(1),
    \ee
    as $\lambda \downarrow 0$. For $C \in \re$ set
    $$
    F_C(x) : = x \ln{x} - Cx, \quad x>0.
    $$
    Note that $F_C'(x) > 0$ if $x>e^{C-1}$. Hence,
    \bel{ms23}
    \leq \#\,\left\{k \in \Z_+ \, | \, k\ln{k} - Ck < |\ln{\lambda}|\right\} = F_C^{-1}(|\ln{\lambda}|) + O(1), \quad \lambda \downarrow 0.
    \ee
    We have
    \bel{ms24}
     F_C^{-1}(y) = \frac{y}{\ln{y}} + \frac{y\,\ln{\ln{y}}}{(\ln{y})^2} + \frac{Cy}{(\ln{y})^2} + o\left(\frac{y}{(\ln{y})^2}\right), \quad y \to \infty.
     \ee
     To see this, set $u = u(y) : = \frac{\ln{y}}{y}  F_C^{-1}(y) -1$ for $y>0$ large enough. Then $u$ satisfies the equation
     $$
     u = \frac{\ln{\ln{y}}}{\ln{y}} + \frac{C}{\ln{y}} + \left(\frac{\ln{\ln{y}}}{\ln{y}} + \frac{C}{\ln{y}}\right)u  - (1+u) \frac{\ln{(1+u)}}{\ln{y}}.
     $$
     Applying a suitable version of the contraction mapping principle, we find that
     $$
     u(y) = \frac{\ln{\ln{y}}}{\ln{y}} + \frac{C}{\ln{y}} + o\left( \frac{1}{\ln{y}}\right), \quad y \to \infty,
     $$
     which implies \eqref{ms24}. Putting together \eqref{ms22}, \eqref{ms23}, and \eqref{ms24}, we get
     \bel{ms32}
    n_+(\lambda; p_q \one_\cO\,p_q) = \Phi_1(\lambda; \gC(\cO) - \ln{2}) + o\left(\frac{|\ln{\lambda}|}{\ln_2(\lambda)^2}\right), \quad \lambda \downarrow 0.
    \ee
    Bearing in mind \eqref{ms26} and \eqref{ms29}, we conclude that \eqref{ms32} implies \eqref{ms25}.
    \subsection{Proof of Corollary \ref{msf2}}
    \label{as3}
     First of all, we note that similarly to \eqref{ms26}  for any constant $C \in \re$ we have
     \bel{ms27}
    \Phi_1(\lambda \, |\ln{\lambda}|; C) =  \Phi_1(\lambda; C) + O(1),
    \ee
    \bel{ms28}
    \Phi_1(\lambda \, |\ln{\lambda}|^{-1}; C) =  \Phi_1(\lambda; C) + O(1),
    \ee
    as $\lambda \downarrow 0$. Further, \eqref{sep1} yields
    \bel{ms20a}
    {\rm Tr}\,\arctan\left(\lambda^{-1} p_q \one_\cO\,p_q\right)
    = \int_\re \frac{n_+(\lambda t; p_q \one_\cO\,p_q)}{1+t^2} dt, \quad \lambda>0.
    \ee
    Let us estimate from above the integral $\int_\re \frac{n_+(\lambda t; p_q \one_\cO\,p_q)}{1+t^2} dt$ with $\lambda > 0$ small enough.
    Taking into account \eqref{ms32}, \eqref{ms28}, and the fact that the function $n_+(\cdot; p_q\one_\cO\,p_q)$ is non-increasing, we find that for any
    $\veps>0$ we have
   $$
    \int_\re \frac{n_+(\lambda t; p_q \one_\cO\,p_q)}{1+t^2} dt =  \int_{|\ln{\lambda}|^{-1}}^\infty \frac{n_+(\lambda t; p_q \one_\cO\,p_q)}{1+t^2} dt
     + \int_0^{|\ln{\lambda}|^{-1}} \frac{n_+(\lambda t; p_q \one_\cO\,p_q)}{1+t^2} dt \leq
     $$
     $$
    n_+(\lambda |\ln{\lambda}|^{-1}; p_q \one_\cO\,p_q)\int_{|\ln{\lambda}|^{-1}}^\infty \frac{dt}{1+t^2}
     + \int_0^{|\ln{\lambda}|^{-1}} \Phi_1(\lambda t; \gC(\cO) - \ln{2} + \veps) dt =
     $$
      \bel{ms33}
      \frac{\pi}{2} \Phi_1(\lambda |\ln{\lambda}|^{-1}; \gC(\cO - \ln{2}) + o\left(\frac{|\ln{\lambda}|}{\ln_2(\lambda)^2}\right), \quad \lambda \downarrow 0,
      \ee
      where at the last line we have used that
    $$
    \int_0^{|\ln{\lambda}|^{-1}} \Phi_1(\lambda t; \gC(\cO) - \ln{2} + \veps) dt = o(1), \;
     n_+(\lambda |\ln{\lambda}|^{-1}; p_q \one_\cO\,p_q) \arctan(\ln{\lambda}|^{-1}) = o(1),
    $$
    as $\lambda \downarrow 0$.
    Let us now estimate from below $\int_\re \frac{n_+(\lambda t; p_q \one_\cO\,p_q)}{1+t^2} dt$ with small $\lambda > 0$. By \eqref{ms32} and \eqref{ms28}, we easily obtain
    $$
    \int_\re \frac{n_+(\lambda t; p_q \one_\cO\,p_q)}{1+t^2} dt
     \geq \int_0^{|\ln{\lambda}|} \frac{n_+(\lambda t; p_q \one_\cO\,p_q)}{1+t^2} dt \geq
     $$
     \bel{ms33a}
    n_+(\lambda |\ln{\lambda}|; p_q \one_\cO\,p_q)\int_0^{|\ln{\lambda}|} \frac{dt}{1+t^2}
       = \frac{\pi}{2} \Phi_1(\lambda; \gC(\cO)-\ln{2}) + o\left(\frac{|\ln{\lambda}|}{\ln_2(\lambda)^2}\right), \quad \lambda \downarrow 0.
      \ee
      Now, \eqref{ms20a}, \eqref{ms33}, and \eqref{ms33a} imply
      $$
      \frac{1}{\pi}\,{\rm Tr}\,\arctan\left(\lambda^{-1} p_q \one_\cO\,p_q\right) =
      \frac{1}{2} \Phi_1(\lambda; \gC(\cO)-\ln{2}) + o\left(\frac{|\ln{\lambda}|}{\ln_2(\lambda)^2}\right), \quad \lambda \downarrow 0,
      $$
      which combined with \eqref{ms26} and \eqref{ms29} yields \eqref{ms25a}.\\

{\bf Acknowledgements}. The authors gratefully acknowledge  the partial
support of the Chilean Science Foundation {\em Fondecyt} under Grant 1170816. \\
Considerable parts of this work were done during  the authors' visits to MSRI, Berkeley, in July 2017, the Oberwolfach Research Institute for Mathematics in February -- March, 2018, UNSW, Sydney, in July 2018, CIRM, Marseille, in January 2019, and during G. Raikov's visit to the Mittag-Leffler Institute, Stockholm, in February -- March 2019. The authors thank  all these institutions for hospitality and financial support.

{\sc Vincent Bruneau}\\ Institut de Math\'ematiques de Bordeaux, UMR  CNRS 5251\\
Universit\'e de Bordeaux\\ 351 cours de la Lib\'eration, 33405 Talence cedex, France\\
E-mail: vbruneau@math.u-bordeaux.fr\\

{\sc Georgi Raikov}\\
 Institute of Mathematics and Informatics\\
Bulgarian Academy of Sciences\\
Acad. G. Bonchev Str., bl. 8,
1113 Sofia, Bulgaria\\
{\em On leave of absence from}:\\
 Facultad de Matem\'aticas\\ Pontificia Universidad
Cat\'olica de Chile\\ Vicu\~na Mackenna 4860, Santiago de Chile\\
E-mail: graikov@mat.uc.cl

\end{document}